\title{ Weierstrass semigroups on Castelnuovo curves }
\author[N. Pflueger]{Nathan Pflueger}\address{Department of Mathematics and Statistics, Amherst College}\email{npflueger@amherst.edu}
\date{\today}
\newtheorem{thm}{Theorem}[section]
\newtheorem{lemma}[thm]{Lemma}
\newtheorem{prop}[thm]{Proposition}
\newtheorem{cor}[thm]{Corollary}
\newtheorem{conj}[thm]{Conjecture}
\theoremstyle{definition}
\newtheorem{defn}[thm]{Definition}
\newtheorem{innereg}[thm]{Example}
\newtheorem{rem}[thm]{Remark}
\newtheorem{qu}[thm]{Question}
\newcommand{\AAA}{\textbf{A}}
\newcommand{\CC}{\textbf{C}}
\newcommand{\FF}{\textbf{F}}
\newcommand{\NN}{\textbf{N}}
\newcommand{\PP}{\textbf{P}}
\newcommand{\ZZ}{\textbf{Z}}
\newcommand{\iou}[1][]{
    \ifthenelse{\equal{#1}{}}{{\color{blue}\{IOU\}}}
    {{\color{blue}\{IOU: #1\}}}
}
\newcommand{\cG}{\mathcal{G}}
\newcommand{\cH}{\mathcal{H}}
\newcommand{\cM}{\mathcal{M}}
\newcommand{\cO}{\mathcal{O}}
\newcommand{\cw}{\mathcal{CW}}
\newcommand{\fd}{\FF_\delta}
\newcommand{\eps}{\varepsilon}
\newcommand{\psirmed}{\Psi_{r,m,\eps}^\delta}
\newcommand{\psiver}{\Psi_{5,d}^{\textrm{ver}}}
\DeclareMathOperator{\Spec}{Spec}
\DeclareMathOperator{\Aut}{Aut}
\DeclareMathOperator{\ewt}{ewt}
\DeclareMathOperator{\codim}{codim}
\DeclareMathOperator{\scroll}{Sc}
\begin{document}

\begin{abstract}
We define a class of numerical semigroups $S$, which we call Castelnuovo semigroups, and study the subvariety $\cM^S_{g,1}$ of $\cM_{g,1}$ consisting of marked smooth curves with Weierstrass semigroup $S$. We determine the number of irreducible components of these loci and determine their dimensions. Curves with these Weierstrass semigroups are always Castelnuovo curves, which provides the basic tool for our argument. This analysis provides examples of numerical semigroups for which $\cM^S_{g,1}$ is reducible and non-equidimensional.
\end{abstract}

\maketitle

%-----------

\section{Introduction} \label{s_intro}

If $C$ is a smooth algebraic curve of genus $g$, and $p$ is any point on $C$, there is a numerical semigroup $S(C,p)$, called the \emph{Weierstrass semigroup}, given by the set of pole orders at $p$ of rational functions on $C$ that are regular away from $p$. Weierstrass's L\"{u}ckensatz (a consequence of the Riemann-Roch formula) states that this semigroup has complement of size $g$ in $\NN$; therefore the number of gaps in a numerical semigroup is called its \emph{genus}.

If $(C,p)$ is a general marked smooth curve, $S(C,p)$ is the ``ordinary semigroup'' $\{0,g+1,g+2,\cdots\},$ and the $i$th gap of the Weierstrass semigroup is upper semicontinuous in $\cM_{g,1}$. Thus the numerical semigroups of genus $g$ give a stratification of $\cM_{g,1}$ by locally closed subvarieties $\cM^S_{g,1}$.  Many aspects of this stratification remain mysterious, especially in high codimension.

The purpose of this paper is to describe the irreducible components of $\cM^S_{g,1}$ in the case of a special class of semigroups: those whose generators form a contiguous interval in $\NN$. For positive integers $r,d$ such that $r \geq 2$ and $d \geq 2r-1$, we denote
$$
S_{r,d} = \langle d-r+1, d-r+2, \cdots, d \rangle.
$$

For reasons to be explained shortly, we call these \emph{Castelnuovo semigroups}. In addition to providing a large family of high-codimension strata in $\cM_{g,1}$, these semigroups also provide infinitely many examples in which $\cM^S_{g,1}$ is reducible, and indeed non-equidimensional. We prove the following.

\begin{thm} \label{t_intro}
Suppose that $r \geq 3$ and $d \geq 2r+1$. Denote the genus of $S$ by $g$.
\begin{enumerate}
\item The locus $\cM^{S_{r,d}}_{g,1}$ is nonempty, and each of its irreducible components $X$ satisfies
$$
g+3 \leq \dim X \leq \frac43 g + 2.
$$
\item If $r \geq 4$ and $(r-1) | (d-1)$, or if $r=5$ and $d$ is even, then $\cM^{S_{r,d}}_{g,1}$ has two irreducible components of different dimensions. Otherwise, $\cM^{S_{r,d}}_{g,1}$ is irreducible.
\end{enumerate}
\end{thm}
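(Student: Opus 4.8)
The plan is to exploit the stated fact that every curve with Weierstrass semigroup $S_{r,d}$ is a Castelnuovo curve, reducing the problem to an enumeration of curves on surfaces of minimal degree in $\PP^r$. First I would show that if $(C,p)$ has semigroup $S = S_{r,d}$ then $h^0(\cO_C((d-r+1)p)) = 2$ while $h^0(\cO_C(dp)) = r+1$: the first count holds because $0$ and $d-r+1$ are the only elements of $S$ that are $\le d-r+1$, and the second because, since $d \ge 2r+1$, no sum of two generators drops below $d+1$, so $S \cap [0,d] = \{0,d-r+1,\dots,d\}$. The former exhibits $C$ as a $k$-gonal curve with $k = d-r+1$, the gonal pencil being $|(d-r+1)p|$ and totally ramified at $p$ (the member $k\cdot p$ lies in the pencil); the latter realizes $C$ under $|dp|$ as a nondegenerate curve of degree $d$ in $\PP^r$. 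A direct gap-count shows the genus of $S$ equals the Castelnuovo bound $\pi(d,r)$, so this embedded model is extremal. By the classification of Castelnuovo curves, $C$ then lies on a surface of minimal degree $r-1$: either a rational normal scroll — the image of a Hirzebruch surface $\fd$ — or, only when $r=5$, the Veronese surface. The ruling of the scroll cuts out exactly the gonal pencil, and $p$ maps to a total ramification point of the induced map $C \to \PP^1$.

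This reduces the study of $\cM^S_{g,1}$ to parametrizing triples (surface, curve $C$, marked point $p$) modulo $\mathrm{PGL}_{r+1}$, since the map $|dp|$ is intrinsic to $(C,p)$ and so the embedded model is determined up to projective equivalence. Working one surface type at a time, I would compute for each admissible scroll $\fd$ the dimension of the resulting family as
$$
\dim\{C \subset \fd \text{ in the forced class}\} \;+\; \dim\{\text{total ramification data at } p\} \;-\; \dim \Aut(\fd),
$$
the last term accounting for the quotient by $\mathrm{PGL}_{r+1}$ together with transitivity on embedded scrolls of a fixed type. Here the class of $C$ on $\fd$ is pinned down by $C\cdot f = k = d-r+1$ (with $f$ the fiber/ruling class) and $C\cdot H = d$ (with $H$ the hyperplane class), together with the genus condition $p_a(C)=g$, and the parameter $\delta$ is the Maroni invariant of the gonal cover. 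Two constraints must be handled with care: that $p$ be a point of total ramification of the ruling projection (a codimension condition), and that the Weierstrass semigroup be exactly $S$ rather than strictly larger (an open condition, which I would check is generically satisfied on each stratum).

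For $r=5$ and $d=2e$ even I would run the parallel count on the Veronese surface, where $C$ pulls back from a plane curve of degree $e$ and the role of the gonal pencil is played by the conics through the image of $p$; this produces a family whose dimension I would compare against the scroll families. Assembling the strata, nonemptiness follows by exhibiting one admissible configuration, and the uniform bounds $g+3 \le \dim X \le \tfrac{4}{3}g + 2$ follow by evaluating the dimension formula at the extreme admissible values of $\delta$ (and against the Veronese count when $r=5$).

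The decisive point — and the main obstacle — is to determine for which $(r,d)$ the various Maroni strata, and the Veronese family when it exists, are genuine irreducible components of distinct dimension rather than lying in one another's closures. I expect that when $(r-1)\nmid(d-1)$ the scroll strata sweep out a single irreducible family, whereas the divisibility $(r-1)\mid(d-1)$ — the extremal-balanced case — forces an additional stratum of strictly different dimension to split off, since there the balancing of the class $C = a\sigma + bf$ across the admissible scrolls becomes ambiguous; the analogous degeneracy for $r=5$ is precisely the appearance of the Veronese family when $d$ is even. Carrying out these closure and dimension comparisons, and verifying that no further coincidences occur, is the crux of the argument.
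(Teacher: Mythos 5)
Your opening reductions are correct and match the paper's strategy: the counts $h^0(\cO_C((d-r+1)p))=2$, $h^0(\cO_C(dp))=r+1$, the identification of the genus of $S_{r,d}$ with the Castelnuovo bound, the resulting embedding by $|dp|$ onto a Castelnuovo curve lying on a scroll or (for $r=5$) a Veronese surface, and the plan to parametrize triples modulo $\Aut \PP^r$. But the structural claim on which your whole dimension count rests is false. The ruling of the scroll does \emph{not} cut out the pencil $|(d-r+1)p|$, and $p$ is \emph{not} a total ramification point of the ruling projection. If $C$ has class $(m+1)H-(r-2-\eps)R$ on the scroll (with $m = \lfloor (d-1)/(r-1)\rfloor$), then the ruling cuts a pencil of degree $C\cdot R = m+1$, not $d-r+1$; under the standing hypotheses $r\geq 3$, $d \geq 2r+1$ one has $m+1 < d-r+1$. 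Were $p$ totally ramified for the ruling pencil, the divisor $(m+1)p$ would move in a pencil and hence $m+1 \in S(C,p)$ --- contradicting the fact that the smallest nonzero element of $S_{r,d}$ is $d-r+1$. So the codimension condition you propose to impose (``total ramification of the ruling at $p$'') cuts out a locus disjoint from the one you need, and every dimension computed from it would be a dimension of the wrong family. The pencil $|(d-r+1)p|$ is in fact cut residually by the hyperplanes through the osculating $(r-2)$-plane at $p$; the correct incidence condition, which the paper uses, is that some hyperplane $\zeta$ meets $C$ \emph{only} at $p$, i.e.\ that $d\cdot p$ is a hyperplane section.

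This matters because the technical heart of the argument is precisely the codimension of that condition: one must show that requiring a curve in $|\Gamma|$ on $\FF_\delta$ to meet a fixed smooth $\zeta \in |H_r|$ to order $d$ at $p$ imposes exactly $d$ conditions when $\delta \in \{0,1\}$ (via surjectivity of $H^0(\FF_\delta,\cO(\Gamma)) \to H^0(\zeta,\cO_\zeta(d\cdot p))$, i.e.\ an $h^1$ vanishing, followed by a Bertini argument to get smooth irreducible members), and at least $d-(\delta-2)$ conditions when $\delta \geq 2$, which is what makes the balanced-scroll locus dense and each $\Phi_{r,m,\eps}$ irreducible. None of this is replaceable by a ramification count, and your proposal defers exactly this (``the crux'') without a correct formulation to carry out. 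Two further inaccuracies: the class of $C$ is pinned down by $C\cdot R = m+1$ (not $d-r+1$), and the bifurcation into two components when $(r-1)\mid(d-1)$ requires $r\geq 4$ --- for $r=3$ the two plausible classes still yield an irreducible $\cM^{S_{3,d}}_{g,1}$ --- a case distinction your closure analysis would need to detect.
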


In fact, we compete the exact dimension of the components of $\cM^{S_{r,d}}_{g,1}$; see Theorem \ref{t_main}. Theorem \ref{t_intro} as stated will be proved in Section \ref{s_msg} as a straightforward corollary. The proof is based on Castelnuovo theory: we show that if $(C,p) \in \cM^{S_{r,d}}_{g,1}$, then $C$  can be embedded as a Castelnuovo curve of degree $d$ in $\PP^r$ (uniquely up to $\Aut \PP^r$), such that the osculating plane at $p$ meets $C$ at no other points. The components of the Hilbert scheme of Castelnuovo curves are known (\cite{ciliberto}; discussed without proof in \cite{montreal}), and the technical aspect of our proof is to establish that imposing the vanishing condition at $p$ drops the dimension by the expected amount.

The assumptions $r \geq 3,\ d \geq 2r+1$ are merely to exclude a few exceptional cases, which are already well-understood:

\begin{enumerate}
\item If $d=2r-1$, then $S_{r,d} = \{0,r,r+1,r+2,\cdots\}$, i.e. the ordinary semigroup of genus $g = r-1$. So $\cM^{S_{r,d}}_{g,1}$ is a dense open subset of $\cM_{g,1}$.
\item If $d=2r$, then $S_{r,d}$ has gaps $\{1,2,\cdots, r, 2r+1\}$, so $g=r+1$. This semigroup is studied in \cite{bullock}, where it is proved that $\cM^{S_{r,d}}_{g,1}$ is irreducible of dimension $2g-1$; it is the locus denoted $\cG^{\textrm{odd}}_g$ in \cite{bullock}, corresponding to points $p$ on non-hyperelliptic curves such that $(g-1)p$ is an odd theta characteristic.
\item If $r=2$, then $S = S_{2,d}$ has only two generators. Such semigroups are easy to analyze; see for example \cite[Section 2.4]{pfl}. The genus is $g = \binom{d-1}{2}$, and $\cM^S_{g,1}$ is irreducible of dimension $g + 2d - 6$.
\end{enumerate}

The original impetus to study Castelnuovo semigroups arose by studying the \emph{effective weight} of numerical semigroups, which is an upper bound on the codimension of $\cM^S_{g,1}$ in $\cM_{g,1}$ introduced in \cite{pfl}. It is defined by $$\ewt(S) = \sum_{b \in \NN \backslash S} \# \left( \mbox{generators $a$ of $S$ with $a < b$} \right).$$ Attempting to find semigroups for which $\codim \cM^S_{g,1} < \ewt(S)$ led the author to attempt to classify the numerical semigroups of maximum effective weight. Experimental evidence strongly suggests that this maximum is $\left\lfloor \frac{(g+1)^2}{8} \right\rfloor$, and that for $g \geq 10$ this maximum is attained only by a particular set of Castelnuovo semigroups. We discuss these matters in more detail in Section \ref{s_ew}.

The three cases above show that if $d \leq 2r$ or $r=2$, then $\cM^{S_{r,d}}_{g,1}$ has codimension $\ewt(S)$ in $\cM_{g,1}$. In fact, we prove (Proposition \ref{p_effproper}) that the same is true if $d = 2r+1$ and a few other cases, but that $\codim \cM^{S_{r,d}}_{g,1} < \ewt(S_{r,d})$ for all other $r,d$. This is the largest class of such semigroups that the author is aware of, and will hopefully suggest how the quantity $\ewt(S)$ can be improved to better predict $\dim \cM^{S}_{g,1}$ for a broader range of semigroups.

\section*{Conventions}

We work over the field $\CC$ of complex numbers. A \emph{point} of a scheme will always refer to a closed point, and when we say that a \emph{general point} of a scheme satisfies a property, we mean that there exists a dense open subset in which all points satisfy the property. A \emph{curve} is always reduced, connected, and complete.

If $D$ is a divisor on a variety $X$, then $|D|$ will denote the complete linear series of $D$, and $\phi_{|D|}$ denotes the associated map to projective space.

%-----------
\section*{Acknowledgments}

This paper is an extension of a portion of my thesis at Harvard University. I am grateful to my adviser Joe Harris for many suggestions and helpful conversations both during and after graduate school.

%-----------
\section{Fully inflected Castelnuovo curves} \label{s_ficc}

Throughout this section we fix two integers $r,d$, and assume that $r \geq 3$ and $d \geq 2r+1$.
The genus of the semigroup $S_{r,d}$ is a quantity that arises in classical algebraic geometry.

\begin{defn} \label{d_pird}
For integers $r,d$ with $r \geq 2$ and $d \geq 2r-1$, let $m = \lfloor \frac{d-1}{r-1} \rfloor$ and $\eps = (d-1) - m(r-1)$. Define
$$
\pi(r,d) = \binom{m}{2} (r-1) + m \eps.
$$ 
\end{defn}

\begin{rem} \label{r_evals}
If $r-1$ divides $d-1$, then we can set $(m,\eps)$ to either $(\frac{d-1}{r-1}, 0)$ or $(\frac{d-1}{r-1}-1,r-1)$ in the expression on the right hand side, and obtain the same result. Going forward, it will often be convenient, instead of fixing integers $r,d$, to fix integers $r,m,\eps$ such that $0 \leq \eps \leq r-1$, and to then define $d = m(r-1) + \eps+1$, therefore splitting cases where $d-1$ is divisible by $r-1$ into two separate cases. The same expression $\binom{m}{2}(r-1) + m\eps$ for $\pi(r,d)$ can be used in both cases. Furthermore, note that if we fix $m,r,\eps$ in this way, our standing assumption that $r \geq 3$ $d \geq 2r+1$ is equivalent to saying that $r \geq 3$ and either $m \geq 3$ or $m=\eps = 2$ (this is easily seen by observing that $d-2r-1 = (m-2)(r-1) + \eps -2$).
\end{rem}

\begin{lemma} \label{l_genus}
The genus of $S_{r,d}$ is $\pi(r,d)$.
\end{lemma}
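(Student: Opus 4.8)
The plan is to describe $S_{r,d}$ explicitly as a union of intervals and then count gaps directly. Write $a = d - r + 1$ for the smallest generator, so that the generators are the $r$ consecutive integers $a, a+1, \ldots, a + (r-1) = d$. Because these are consecutive, a sum of exactly $k$ of them (with repetition) takes every integer value between the minimum $ka$ and the maximum $kd$; hence the set of elements of $S_{r,d}$ expressible as a sum of $k$ generators is precisely the interval $[ka, kd] \cap \NN$, of length $k(r-1)$. Consequently
$$
S_{r,d} = \{0\} \cup \bigcup_{k \geq 1} [ka, kd].
$$

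First I would locate the gaps. The complement $\NN \setminus S_{r,d}$ decomposes into the initial block $\{1, \ldots, a-1\}$ together with the blocks $\{kd+1, \ldots, (k+1)a - 1\}$ lying between consecutive intervals. A short computation using $a = d - (r-1)$ shows that the block between the $k$-th and $(k+1)$-th intervals contributes $\max\{0,\ d - 1 - (k+1)(r-1)\}$ gaps, and the same formula covers the initial block upon setting $k = 0$. Reindexing with $j = k+1$, the total genus is
$$
\sum_{j \geq 1} \max\{0,\ d - 1 - j(r-1)\}.
$$

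Next I would evaluate the sum. Writing $d - 1 = m(r-1) + \eps$ with $m = \lfloor (d-1)/(r-1) \rfloor$ and $0 \le \eps \le r-2$, the summand $d - 1 - j(r-1) = (m-j)(r-1) + \eps$ is nonnegative exactly for $1 \le j \le m$ and strictly negative for $j \ge m+1$. Substituting $i = m - j$ gives
$$
\sum_{i=0}^{m-1} \bigl( i(r-1) + \eps \bigr) = (r-1)\binom{m}{2} + m\eps = \pi(r,d),
$$
as desired. By Remark \ref{r_evals}, the case $r-1 \mid d-1$ needs no separate treatment, since either choice of $(m,\eps)$ yields the same value.

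The main obstacle is the structural claim that the intervals $[ka, kd]$ are exactly right: that every integer inside each interval lies in the semigroup (which is where consecutivity of the generators is used) and that no integer of $S_{r,d}$ lies outside all of them. Once that is settled, the only remaining care is the bookkeeping that the inter-interval blocks stop producing gaps precisely when the intervals begin to abut or overlap, which is exactly the transition encoded by the $\max$ in the summand and the cutoff at $j = m$.
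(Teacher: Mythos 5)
Your proof is correct and takes essentially the same route as the paper: the paper's proof likewise decomposes the gaps into the contiguous segments $(kd,\,(k+1)(d-r+1)) \cap \ZZ$ --- exactly your blocks $\{kd+1,\ldots,(k+1)a-1\}$ --- and sums the resulting arithmetic progression with first term $d-r$ and common difference $-(r-1)$. You merely make explicit two things the paper leaves implicit, namely the structural claim $S_{r,d} = \{0\} \cup \bigcup_{k \geq 1} [ka,kd]$ and the evaluation of the sum, which the paper dismisses as an ``elementary calculation.''
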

\begin{proof}
The gaps of $S_{r,d}$ can be decomposed into a sequence of contiguous segments of the form $(kd , (k+1) (d-r+1)) \cap \ZZ$. The sizes of these segments form an arithmetic progression, beginning at $d-r$, with common difference $-(r-1)$, and terminating once the next term would be negative. An elementary calculation shows that the sum of this progression is $\pi(r,d)$.
\end{proof}

A classical theorem of Castelnuovo (see e.g. \cite[p. 527]{gh}, \cite[Chapter 3]{montreal}, or \cite[\S III.2]{ACGH}) states that a smooth non-degenerate curve of degree $d$ in $\PP^r$ has genus at most $\pi(r,d)$. A curve achieving this maximum for $r \geq 3$ and $d \geq 2r+1$ is called a \emph{Castelnuovo curve.}

\begin{rem}
Carvalho and Torres \cite{ct} prove an upper bound on the genus of a numerical semigroup that is analogous to Castelnuovo's bound on the genus of a curve in $\PP^r$. Castelnuovo semigroups give an equality case for this bound.
\end{rem}

\begin{lemma} \label{l_embeds}
If $(C,p) \in \cM^{S_{r,d}}_{g,1}$, then the complete linear system $|d \cdot p|$ embeds $C$ in $\PP^r$ as a Castelnuovo curve of degree $d$, and the contact orders of hyperplanes at $p$ are exactly $0,1,2,\cdots,r-1$, and $d$.
\end{lemma}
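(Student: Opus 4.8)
The plan is to extract all the numerical data of $|d\cdot p|$ directly from the semigroup, show that $\phi_{|d\cdot p|}$ is a birational morphism onto a nondegenerate curve, and only then promote it to an embedding using the extremality of the genus. First I would compute $h^0(d\cdot p)$. Since the least positive element of $S_{r,d}$ is $d-r+1$ and $2(d-r+1)=d+(d-2r+2)>d$ by the hypothesis $d\ge 2r+1$, the non-gaps in $[0,d]$ are exactly $0,d-r+1,d-r+2,\dots,d$; hence $h^0(d\cdot p)=r+1$ and $\dim|d\cdot p|=r$. A function with pole order $s\le d$ at $p$ gives a section of $\cO_C(d\cdot p)$ vanishing to order $d-s$ at $p$, so the vanishing sequence of $|d\cdot p|$ at $p$ is $\{\,d-s: s\in S_{r,d},\ s\le d\,\}=\{0,1,\dots,r-1,d\}$, which is the asserted list of contact orders. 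Since vanishing order $0$ occurs, $p$ is not a base point, and the constant function $1$, whose divisor is $d\cdot p$, is nonzero at every $q\neq p$; thus $|d\cdot p|$ is base-point-free and defines a morphism $\phi\colon C\to\PP^r$ with nondegenerate image.

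Next I would show $\phi$ is birational onto its image $C'$. If $\phi$ had degree $k\ge 2$, factor it as $C\xrightarrow{\psi}\tilde C'\to C'$ with $k\cdot\deg C'=d$. Every $f\in H^0(d\cdot p)$ is a pullback $\psi^*g$ of a rational function $g$ on $C'$, and as $f$ has poles only at $p$, the function $g$ has poles only at $p'=\psi(p)$ and $\psi^{-1}(p')=\{p\}$; hence $\psi$ is totally ramified at $p$ of index $k$, and every pole order attained in $H^0(d\cdot p)$ is a multiple of $k$. But $d-r+1$ and $d-r+2$ are consecutive pole orders, so they cannot both be divisible by $k\ge 2$ --- a contradiction. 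Thus $k=1$, and $\phi$ is birational onto a nondegenerate degree-$d$ curve $C'$; since $C$ is smooth of genus $\pi(r,d)$ by Lemma~\ref{l_genus}, the curve $C'$ has geometric genus $\pi(r,d)$ and so is extremal for Castelnuovo's bound.

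Finally I would upgrade $\phi$ to an embedding by separating points and tangent vectors, equivalently by proving $h^0(d\cdot p-q_1-q_2)=r-1$ for every effective divisor $q_1+q_2$ of degree two. Pairs involving $p$ are easy: the initial vanishing orders $0,1,\dots,r-1$ show $\phi$ is immersive at $p$, while the osculating hyperplane (whose pullback is $d\cdot p$) meets $C$ only at $p$, so no $q\ne p$ satisfies $\phi(q)=\phi(p)$. The main obstacle is the separation of interior points $q_1,q_2\ne p$: a failure would make $C'$ singular at $\phi(q_1)$, forcing $p_a(C')>p_g(C')=\pi(r,d)$, and this is \emph{not} excluded by Castelnuovo's bound, which constrains only the geometric genus. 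I would resolve this by invoking the structure theory of extremal curves: a nondegenerate degree-$d$ curve of genus $\pi(r,d)$ lies on a surface of minimal degree $r-1$ (a rational normal scroll, or the Veronese surface when $r=5$), and the curves attaining the bound on such surfaces are smooth. Hence the normalization $\phi$ is an isomorphism, $|d\cdot p|$ is very ample, and its image is a smooth nondegenerate degree-$d$ curve of genus $\pi(r,d)$ --- a Castelnuovo curve --- with contact orders $0,1,\dots,r-1,d$ at $p$.
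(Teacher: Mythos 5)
Your proof is correct, but it takes a genuinely different route from the paper's, and its final step needs more care than you give it. The paper avoids Castelnuovo theory entirely: since $S_{r,d}$ is generated by $d-r+1,\dots,d$, the functions in $H^0(C,\cO_C(d\cdot p))$ generate the whole ring of regular functions on $C\setminus\{p\}$ (any subring realizing every pole order in $S(C,p)$ is the whole ring, by induction on pole order), so $\phi_{|d\cdot p|}$ is a closed embedding of $C\setminus\{p\}$ into $\AAA^r$ from the outset; injectivity on points then follows because only $p$ maps to the hyperplane at infinity, and immersivity at $p$ from the contact order $1$, exactly as in your handling of the point $p$. Your route --- base-point-freeness, birationality via the consecutive pole orders $d-r+1$ and $d-r+2$ (a clean and correct argument), then smoothness of the image --- is the classical linear-series approach, but the statement you invoke, that ``curves attaining the bound on minimal-degree surfaces are smooth,'' is true only in the geometric-genus reading: the linear systems $|(m+1)H-(r-2-\eps)R|$ on scrolls certainly contain singular members of arithmetic genus $\pi(r,d)$. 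What you actually need is that an \emph{integral} nondegenerate degree-$d$ curve in $\PP^r$ with geometric genus $\pi(r,d)$ must be smooth. This does hold, either by combining the integral-curve form of ``extremal curves lie on a minimal-degree surface'' (valid because the uniform position lemma holds for integral curves) with the adjunction computation bounding the arithmetic genus of integral degree-$d$ curves on such surfaces by $\pi(r,d)$, or more directly from the arithmetic-genus form of Castelnuovo's bound for integral curves, which the standard Hilbert-function proof yields and which renders the detour through minimal-degree surfaces unnecessary. The trade-off is clear: the paper's argument is short, elementary, and self-contained --- which matters, since this lemma is the entry point to the paper's use of Castelnuovo theory, not a consequence of it --- whereas your argument imports that theory in a form (for possibly singular integral curves) that you would need to source or prove carefully.
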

\begin{proof}
Regard the elements of $H^0(C,\cO_C(d \cdot p))$ as regular functions on $C \backslash \{p \}$. The ring generated by these functions includes functions of every possible pole order at $d$, hence this ring includes all regular functions on $C \backslash \{p\}$. So $\phi_{|d \cdot p|}$ embeds $C \backslash \{p\}$ in $\AAA^r$. Only $p$ is sent to infinity, so $\phi_{|d \cdot p|}$ is injective on points. The multiplicity of $p$ in an element of $|d \cdot p|$ is equal to the contact order of a hyperplane in $\PP^r$, and in turn it is $d$ minus the pole order of a rational function that is regular away from $p$. So these contact orders are $\{0,1,\cdots,r-1,d\}$. In particular, since $1$ is a contact order at $p$, the map $\phi_{|d \cdot p |}$ induces an injection on the tangent space at $p$; it follows that it is an embedding.
\end{proof}

\begin{lemma} \label{l_reverse}
Suppose that $C$ is a Castelnuovo curve of degree $d$ in $\PP^r$, such that the hyperplanes in $\PP^r$ have contact orders exactly $0,1,2\cdots, r-1, d$ at $p$. Then $S(C,p) = S_{r,d}$, and $C$ is embedded by $\phi_{|d \cdot p|}$.
\end{lemma}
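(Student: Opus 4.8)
The plan is to run the argument of Lemma \ref{l_embeds} in reverse: from the geometry of the embedded curve I would reconstruct enough of $S(C,p)$ to pin it down by a genus count, and then recognize the given embedding as $\phi_{|d \cdot p|}$. Write $\cO_C(1)$ for the hyperplane bundle, so $\deg \cO_C(1) = d$, and let $V \subseteq H^0(C, \cO_C(1))$ be the $(r+1)$-dimensional linear series cutting out the embedding. By definition the contact orders $0,1,\dots,r-1,d$ at $p$ are exactly the distinct vanishing orders at $p$ of the sections in $V$.

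First I would extract a function of pole order $d$ from the top contact order. Some hyperplane meets $C$ with contact order $d$ at $p$, so its section $s_0 \in V$ has $\mathrm{ord}_p(s_0) = d$; since $\mathrm{div}(s_0)$ is effective of total degree $d = \deg \cO_C(1)$, necessarily $\mathrm{div}(s_0) = d \cdot p$. In particular $\cO_C(1) \cong \cO_C(d \cdot p)$, and $s_0$ vanishes nowhere on $C \setminus \{p\}$. For each remaining contact order $a \in \{0,1,\dots,r-1\}$, choosing $s_a \in V$ with $\mathrm{ord}_p(s_a) = a$, the ratio $s_a/s_0$ is a rational function regular on $C \setminus \{p\}$ (its only possible poles lie on the zero locus $\{p\}$ of $s_0$) with a pole of order exactly $d - a$ at $p$. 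This exhibits $d, d-1, \dots, d-r+1$ as elements of $S(C,p)$, whence $S_{r,d} = \langle d-r+1, \dots, d \rangle \subseteq S(C,p)$.

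Equality then follows from a gap count. By Lemma \ref{l_genus} the semigroup $S_{r,d}$ has genus $\pi(r,d)$; since $C$ is a Castelnuovo curve its genus is exactly $\pi(r,d)$, and by Weierstrass's L\"uckensatz $S(C,p)$ therefore has exactly $\pi(r,d)$ gaps. The inclusion $S_{r,d} \subseteq S(C,p)$ forces the gap set of $S(C,p)$ to be contained in that of $S_{r,d}$; two finite sets of equal cardinality, one inside the other, coincide, so $S(C,p) = S_{r,d}$. Finally, having shown $(C,p) \in \cM^{S_{r,d}}_{g,1}$, I would invoke Lemma \ref{l_embeds} to get $h^0(C, \cO_C(d \cdot p)) = r+1$; combined with $\cO_C(1) \cong \cO_C(d \cdot p)$ and $\dim V = r+1$, the inclusion $V \subseteq H^0(C, \cO_C(d \cdot p))$ is an equality, so the series is complete and the embedding is $\phi_{|d \cdot p|}$.

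I expect the only genuinely delicate point to be passing from the inclusion $S_{r,d} \subseteq S(C,p)$ to equality: this is where the Castelnuovo hypothesis does the essential work, forcing the genus of $C$ to match the genus of $S_{r,d}$ so that the two gap sets cannot differ. The supporting observation that the top contact order coincides with the degree $d$, and hence that $\mathrm{div}(s_0) = d \cdot p$ exactly rather than merely $\ge d \cdot p$, is the small bookkeeping fact that makes both the pole-order construction and the final identification of the bundle go through; everything else reuses the mechanism already present in Lemma \ref{l_embeds}.
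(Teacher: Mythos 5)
Your proposal is correct and follows essentially the same route as the paper's proof: pull back hyperplane sections (divided by the section cutting out $d \cdot p$) to realize $d-r+1, \dots, d$ as pole orders, deduce $S_{r,d} \subseteq S(C,p)$, conclude equality because the Castelnuovo genus bound and the L\"uckensatz force both semigroups to have exactly $\pi(r,d)$ gaps, and identify the embedding with $\phi_{|d \cdot p|}$ since the hyperplane series is a full-dimensional subseries of $|d \cdot p|$. The only cosmetic difference is that you obtain $\dim |d \cdot p| = r$ by citing Lemma \ref{l_embeds}, where the paper reads it off directly; both are legitimate and your explicit verification that the contact-order-$d$ hyperplane pulls back to exactly $d \cdot p$ is a point the paper leaves implicit.
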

\begin{proof}
Let $\zeta$ be the hyperplane pulling back to $d \cdot p$ on $C$. Then pulling back $H^0(\cO_{\PP^r}, \cO_{\PP^r}(\zeta))$ to $C$, and regarding the resulting sections of $H^0(C, \cO_C(d \cdot p))$ as rational functions on $C$ regular away from $p$, we see that $S(C,p)$ contains the elements $d-r+1,d-r+2,\cdots,d$. Hence $S_{r,d} \subseteq S(C,p)$. Since $S(C,p)$ has the same number of gaps, $g$, as $S_{r,d}$, these two semigroups are equal. The fact that $C$ is embedded by $\phi_{|d \cdot p|}$ follows since $d\cdot p$ is a hyperplane section and $\dim |d \cdot p| = r$.
\end{proof}

Therefore we can study the irreducible components of $\cM^{S_{r,d}}_{g,1}$ in terms of the irreducible components of the following variety.

\begin{defn} \label{d_Phird}
Let $\Phi_{r,d}$ denote the variety of triples $(C,\zeta,p)$, where $[C]$ is a point in the Hilbert scheme of degree $d$ smooth curves of genus $g$ in $\PP^r$, $p$ is a point on $C$, and $\zeta$ is a hyperplane in $\PP^r$, such that $C$ and $\zeta$ meet only at $p$ and the other contact orders of hyperplanes at $p$ are $0,1,2,\cdots,r-1$.
\end{defn}

\begin{cor} \label{c_mapToMg}
The forgetful map $\Phi_{r,d} \rightarrow \cM_{g,1}$ has image equal to $\cM^{S_{r,d}}_{g,1}$, and all fibers isomorphic to $\Aut \PP^r$. The irreducible components of $\Phi_{r,d}$ are in bijection with the irreducible components of $\cM^{S_{r,d}}_{g,1}$, and each has dimension $\dim \Aut \PP^r$ more than the corresponding component in $\cM^{S_{r,d}}_{g,1}$.
\end{cor}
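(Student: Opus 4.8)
The plan is to derive the Corollary entirely from Lemmas \ref{l_embeds} and \ref{l_reverse}, organized around the action of the group $G = \Aut\PP^r = \mathrm{PGL}_{r+1}$ on $\Phi_{r,d}$ induced by its action on $\PP^r$; the forgetful map $f\colon (C,\zeta,p)\mapsto (C,p)$ is $G$-invariant, and I will show its fibers are exactly the $G$-orbits.

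First I would identify the image. For a point of $\Phi_{r,d}$, the $r+1$ distinct contact orders $0,1,\dots,r-1,d$ are the vanishing orders at $p$ of the linear system cut out by hyperplanes; their being $r+1$ in number forces that system to have projective dimension $r$, so $C$ is non-degenerate. Since $C$ has degree $d$ and genus $\pi(r,d)$, it attains Castelnuovo's bound and is a Castelnuovo curve, and as the contact orders are exactly $0,\dots,r-1,d$, Lemma \ref{l_reverse} gives $S(C,p)=S_{r,d}$; hence the image lies in $\cM^{S_{r,d}}_{g,1}$. Conversely, given $(C,p)\in\cM^{S_{r,d}}_{g,1}$, Lemma \ref{l_embeds} embeds $C$ in $\PP^r$ as a Castelnuovo curve with contact orders $0,\dots,r-1,d$; the contact order $d$ means some hyperplane $\zeta$ satisfies $\zeta\cdot C\ge d\cdot p$, hence $\zeta\cdot C=d\cdot p$ by degree, so $(C,\zeta,p)\in\Phi_{r,d}$ maps to $(C,p)$. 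Thus the image is exactly $\cM^{S_{r,d}}_{g,1}$.

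Next I would analyze the fibers. Over $(C,p)$ I compute $h^0(\cO_C(d\cdot p))=\#\{s\in S_{r,d}:s\le d\}=r+1$ (namely $0$ and the $r$ generators $d-r+1,\dots,d$), so $|d\cdot p|$ is a complete $g^r_d$; by Lemma \ref{l_reverse} every point of the fiber is embedded by $\phi_{|d\cdot p|}$, hence differs from a fixed one only by the choice of basis of $H^0(\cO_C(d\cdot p))$, i.e. by an element of $G$. Moreover $\zeta$ is then forced, being the unique hyperplane with $\zeta\cdot C=d\cdot p$ (the sections of $\cO_C(d\cdot p)$ vanishing to order $d$ at $p$ are just the constants, a $1$-dimensional space). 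So each fiber is a single $G$-orbit, with stabilizer $\Aut(C,p)$, which is finite since $g\ge 2$; hence the fiber is irreducible of dimension $\dim\Aut\PP^r$, and is isomorphic to $\Aut\PP^r$ exactly when $(C,p)$ has no nontrivial automorphism (equivalently, it is an $\Aut\PP^r$-torsor over the corresponding point of the stack $\cM_{g,1}$).

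Finally I would pass to components. Because $G$ is connected it preserves each irreducible component $Z$ of $\Phi_{r,d}$, and for such a $G$-invariant $Z$ one has $w\in f(Z)$ iff $f^{-1}(w)\subseteq Z$, so that $f^{-1}(f(Z))=Z$ and $f(Z),\,f(\Phi_{r,d}\setminus Z)$ partition $\cM^{S_{r,d}}_{g,1}$. Realizing $f$ as the quotient by $G$ --- concretely, $\Phi_{r,d}$ is the bundle of projective frames for the universal $\cO(d\cdot p)$ over $\cM^{S_{r,d}}_{g,1}$ --- shows $f$ is open, so each $f(Z)$ is closed and irreducible; if $f(Z_1)\subseteq f(Z_2)$ the displayed equivalence forces $Z_1\subseteq Z_2$, so distinct components have incomparable images. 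With $\cM^{S_{r,d}}_{g,1}=\bigcup_Z f(Z)$, the assignment $Z\mapsto f(Z)$ is a bijection onto the set of components, and the theorem on dimensions of fibers, applied to $Z\to f(Z)$ with all fibers of dimension $\dim\Aut\PP^r$, gives $\dim Z=\dim f(Z)+\dim\Aut\PP^r$. The main obstacle is precisely this last step: making rigorous, in the presence of the finite automorphism groups $\Aut(C,p)$, that $f$ is a well-behaved (open) quotient, i.e. that $\Phi_{r,d}$ genuinely is such a frame bundle, since it is openness that rules out a small component mapping into the image of a larger one. The image identification and the orbit description of the fibers are then routine.
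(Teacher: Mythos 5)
Your proposal is correct and takes essentially the same approach as the paper: Corollary \ref{c_mapToMg} is stated there without an explicit proof, as an immediate consequence of Lemmas \ref{l_embeds} and \ref{l_reverse}, and your identification of the image via those two lemmas and of the fibers as single $\Aut \PP^r$-orbits (with the hyperplane $\zeta$ uniquely determined, and stabilizer $\Aut(C,p)$) is exactly the intended argument. The component-and-dimension bookkeeping you carry out, including the quotient/openness subtlety you honestly flag, is left implicit in the paper; that subtlety is real but standard, and can be closed, e.g., by lifting a DVR-valued point of $\cM^{S_{r,d}}_{g,1}$ to an actual family of marked curves after a finite base change and re-embedding the family by the relative $|d \cdot p|$ (cohomology and base change applies since $h^0 = r+1$ is constant), which shows each $f(Z)$ is closed under specialization inside $\cM^{S_{r,d}}_{g,1}$ and hence rules out nesting of images of distinct components.
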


\subsection{The Hilbert scheme of Castelnuovo curves} \label{ss_hilbert}
We now review the basic structure theorems for Castelnuovo curves.

The standard proof of the Castelnuovo bound has a qualitative consequence: a Castelnuovo curve always lies on a surface of degree $r-1$. A theorem of Bertini (see e.g. \cite{scrollitude}) states that such a surface is either a rational normal surface scroll or a Veronese surface in $\PP^5$. 

By a ``rational normal surface scroll,'' we mean either a smooth scroll or a cone over a rational normal curve of degree $r-1$. Equivalently, a scroll is the variety $\scroll_{a,b}$ \label{pg_sc} cut out by the $2 \times 2$ minors of the matrix
$$
\left( \begin{array}{cccc cccc}
X_0 & X_1  & \cdots & X_{a-1} & Y_{0} & Y_{1} & \cdots & Y_{b-1} \\
X_1 & X_2 & \cdots & X_a & Y_{1} & Y_{2} & \cdots & Y_{b}
\end{array} \right),
$$
where $\{X_0,\cdots,X_a,Y_0,\cdots,Y_b\}$ is a basis for $H^0(\PP^r, \cO_{\PP^r}(1))$. If $a,b$ are positive then the scroll is smooth and isomorphic to the Hirzebruch surface $\FF_{|b-a|}$, while if $a=0$ and $b=r-1$ then the scroll is a cone over a rational normal curve of degree $r-1$, and the blow-up of the scroll at the cone point $[1,0,0,\cdots,0]$ is isomorphic to $\FF_{r-1}$. See Section \ref{s_hirzebruch} for a definition of $\FF_\delta$ and a description of the map to $\PP^r$. The number $\delta = |b-a|$ will be called the \emph{invariant} of the scroll. A scroll with $\delta \in \{0,1\}$ is called \emph{balanced}, while a scroll with $\delta = r-1$ is called a \emph{singular scroll}.

Denote by $\cH_{r,d}$ the (open) locus in the Hilbert scheme of degree $d$ and arithmetic genus $\pi(r,d)$ schemes in $\PP^r$ consisting of smooth non-degenerate curves. A description of the schemes $\cH_{r,d}$ was stated without full proof in \cite{montreal}, and proved in \cite{ciliberto}. We use different notation for these components from that of \cite{ciliberto}, which will be convenient later.

\begin{defn} \label{d_hmre}
Let $r,m,\eps$ be integers such that $r \geq 3$, $0 \leq \eps \leq r-1$, $m \geq 2$. If $m=2$, also assume that $\eps \geq 2$. Denote by $\cH_{r,m,\eps}$ the subvariety of the Hilbert scheme given by curves of class $(m+1) H - (r-2-\eps)F$ on a scroll in $\PP^r$, where $H$ is the hyperplane class and $F$ is the class of a ruling line (see Section \ref{ss_fdPrelims}).

Denote by $\cH^\delta_{r,m,\eps} \subseteq \cH_{r,m,\eps}$ the locally closed subvariety consisting of curves lying on a scroll of invariant $\delta$.
\end{defn}
\begin{defn} \label{d_hver}
Let $d \geq 12$ be an even integer. Let $\cH^{\textrm{ver}}_{5,d}$ denote the locally closed subvariety of $\cH_{5,d}$ consisting of curves of class $\frac{d}{2} H$ on a Veronese surface in $\PP^5$.
\end{defn}

\begin{thm}[{\cite[Theorem 1.4]{ciliberto}}] \label{t_ciliberto}
Let $m,\eps$ be the quotient and remainder when $d-1$ is divided by $r-1$. One irreducible component of $\cH_{r,d}$ is $\cH_{r,m,\eps}$. There is a second irreducible component in the following two cases; in all other cases $\cH_{r,d}$ is irreducible.
\begin{enumerate}
\item If $\eps = 0$ and $r \geq 4$, then $\cH_{r,m-1,r-1}$ is a second irreducible component. Its intersection with the first is equal to $\cH^{r-1}_{r,m,0}$.
\item If $r=5$ and $d$ is even, then $\cH_{5,d}$ is a disjoint union of $\cH_{5,m,\eps}$ and $\cH^{\textrm{ver}}_{5,d}$, and the closure of $\cH^{\textrm{ver}}_{5,d}$ in $\cH_{5,d}$ is a second irreducible component. If $4 | d$ (i.e. $\eps = 3$) then the two components intersect in $\cH^4_{5,m,3}$; otherwise, they are disjoint.
\end{enumerate}
The dimensions of these components are as follows.
\begin{eqnarray*}
\dim \cH_{r,m,\eps} &=& \pi(r,d) + 2m + \eps + d -r -3 + \dim \Aut \PP^r\\
\dim \cH^{\textrm{ver}}_{5,d} &=& \pi(5,d) + 2m + \eps + d-8 + \dim \Aut \PP^r - \frac{\eps+1}{2}
\end{eqnarray*}
\end{thm}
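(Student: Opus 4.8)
The plan is to exploit the structural fact recalled above: every Castelnuovo curve lies on a surface of degree $r-1$, which by Bertini's theorem is either a rational normal scroll or, when $r=5$, the Veronese surface in $\PP^5$. First I would fix such a surface $S$ and classify the smooth nondegenerate curves $C \subset S$ of degree $d$ and genus $\pi(r,d)$ by intersection theory. Resolving a scroll as the Hirzebruch surface $\fd$, I would write the class of $C$ as $aH - bR$, compute the degree $C \cdot H = a(r-1) - b$, and compute the genus from adjunction $2\pi - 2 = C \cdot (C + K_S)$ using $K_S = -2H + (r-3)R$. Matching these to $d$ and to the Castelnuovo bound $\pi(r,d)$ forces the class to be as balanced as possible, pinning it down to $(m+1)H - (r-2-\eps)R$, where $m,\eps$ are the quotient and remainder of $d-1$ upon division by $r-1$. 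The ambiguity noted in Remark \ref{r_evals} — that when $\eps = 0$ the pair may equally be written $(m-1,r-1)$ — is exactly what produces the alternative class $mH + R$ of the same degree and genus, hence the candidate second component $\cH_{r,m-1,r-1}$.

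Next I would carry out the dimension count. Since a general Castelnuovo curve lies on a unique scroll, $\cH_{r,m,\eps}$ fibers over the family of scrolls with fiber the linear system $|C|$ on a fixed $S$, so $\dim \cH_{r,m,\eps} = \dim\{\text{scrolls}\} + \dim |C|_S$. The scrolls of a fixed invariant $\delta$ form a single orbit $\Aut \PP^r / \Aut \scroll_{a,b}$, so I would compute $\dim \Aut \scroll_{a,b}$ for each $\delta$ — it is smallest, hence the orbit largest, for the balanced scroll — and compute $\dim |C|_S = h^0(S, \cO_S(C)) - 1$ by Riemann--Roch on $\fd$ together with the vanishing of $h^1$ and $h^2$. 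Adding these and simplifying should yield the stated formula $\pi(r,d) + 2m + \eps + d - r - 3 + \dim \Aut \PP^r$. The Veronese case is handled identically, with $\Aut$ of the Veronese surface (a copy of $\Aut \PP^2$) replacing $\Aut$ of the scroll and with $\cO_S(C)$ pulling back to $\cO_{\PP^2}(d)$, which accounts for the correction term $-\frac{\eps+1}{2}$.

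To establish irreducibility of each $\cH_{r,m,\eps}$, I would show that the component of largest dimension is the balanced one $\cH^\delta_{r,m,\eps}$ with $\delta \in \{0,1\}$, that it is irreducible as an orbit fibered by a projective space, and that curves on less balanced scrolls lie in its closure, obtained by degenerating a balanced scroll to a more singular one while carrying the class along. The genuinely new phenomenon — a second component in the two exceptional cases — then requires showing that the two families are incomparable under specialization except along a common boundary. For $\eps = 0$ the classes $(m+1)H - (r-2)R$ and $mH + R$ become identified only on a singular scroll (a cone), where the two descriptions of a curve through the vertex coincide, giving the intersection $\cH^{r-1}_{r,m,0}$; for $r = 5$ with $d$ even the Veronese family is disjoint from the scroll family except, when $4 \mid d$, along the cone locus $\cH^4_{5,m,3}$, which I would identify by analyzing how a single curve can lie simultaneously on a scroll and on a degeneration of the Veronese surface.

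The hardest part will be this last step: proving completeness — that no further components arise — and computing the intersections correctly. The cohomology and automorphism-dimension calculations are laborious but routine; the delicate point is the local study near the singular-scroll locus, where one must verify that the two candidate components genuinely meet along the asserted codimension-one locus and that neither is contained in the closure of the other. This amounts to a careful deformation-theoretic comparison, via normal bundles, of the two ways a fixed boundary curve can be swept out by the competing surface families.
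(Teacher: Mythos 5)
You should know at the outset that the paper contains no proof of this statement to compare against: Theorem \ref{t_ciliberto} is quoted from \cite[Theorem 1.4]{ciliberto} (and discussed without proof in \cite{montreal}), and the present paper simply cites it. So your proposal is an attempted reconstruction of Ciliberto's proof. Its skeleton --- minimal-degree surface from Castelnuovo theory and Bertini's classification, adjunction on $\FF_\delta$ to pin down the class of $C$, dimension count as (parameters of scrolls) plus $\dim |C|$ on a fixed surface, and a specialization analysis in the exceptional cases --- is indeed the standard route, and is essentially the same strategy this paper itself deploys in Sections \ref{s_veronese}--\ref{s_msg} for the refined spaces $\Phi_{r,m,\eps}$ and $\Phi^{\textrm{ver}}_{5,d}$; your adjunction data ($K_S = -2H + (r-3)R$, $C \cdot H = a(r-1)-b$) and the identification of the alternative class $mH + R$ when $\eps = 0$ are correct.

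However, the outline has concrete errors and gaps. First, the Veronese count is wrong as written: a degree-$d$ curve on the Veronese surface is a plane curve of degree $d/2$, so $\cO_S(C)$ pulls back to $\cO_{\PP^2}(d/2)$, not $\cO_{\PP^2}(d)$ (a curve in $|\cO_{\PP^2}(d)|$ would have degree $2d$ in $\PP^5$); the correction term $-\frac{\eps+1}{2}$ must come out of $h^0(\cO_{\PP^2}(d/2))$, and your version would not reproduce it. Second, you never explain why case (1) requires $r \geq 4$: when $r=3$ and $\eps = 0$, the two classes $(m+1)H - R$ and $mH + R$ on a smooth quadric are the two bidegrees $(m,m+1)$ and $(m+1,m)$, which are exchanged by the automorphism of $\PP^3$ swapping the rulings, so they sweep out a single irreducible locus rather than two components; a correct proof must isolate this case. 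Third, the claim that the per-scroll computation is ``Riemann--Roch together with vanishing of $h^1$'' fails on unbalanced scrolls: for large $\delta$ one has $C \cdot D < -1$, hence $h^1(\cO_S(C)) > 0$ (compare Corollary \ref{c_h1}), so bounding the dimension of the loci $\cH^\delta_{r,m,\eps}$ --- which you need in order to show the balanced locus is dense and that no unbalanced locus forms its own component --- requires estimates of the kind the paper proves in Lemma \ref{l_h1bound} and Proposition \ref{p_dimpsi2}, not a clean Euler-characteristic count. Finally, you correctly flag the completeness and intersection statements (no further components; the two exceptional components meeting exactly along the cone loci $\cH^{r-1}_{r,m,0}$ and $\cH^4_{5,m,3}$) as the hard part, but this is precisely where the substance of Ciliberto's proof lies, and your sketch offers no argument for it beyond the intention to perform ``a careful deformation-theoretic comparison.''
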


The classification of the components of $\cH_{r,d}$ allows us to split $\Phi_{r,d}$ into pieces according to the type of surface the curve lies on, and its class on that surface.

\begin{defn} \label{d_Phirme}
Denote by $\Phi_{r,m,\eps}$ the parameter space of triples $(C,\zeta,p)$, where $C$ is a curve of class $(m+1)H - (r-2-\eps)F$ on a rational normal surface scroll $X$ in $\PP^r$, $\zeta$ is a hyperplane, and $p$ is a point on $C$ such that $\zeta \cap C = \{p\}$ set theoretically, and such that other hyperplanes in $\PP^r$ have contact of orders $0,1,2,\cdots,r-1$ at $p$.

Denote by $\Phi^{\delta}_{r,m,\eps}$ the locally closed subvariety of $\Phi_{r,m,\eps}$ consisting of triples $(C,\zeta,p)$ such that the scroll on which $C$ lies has invariant $\delta$.

For an even integer $d \geq 12$, let $\Phi^{\textrm{ver}}_{5,d}$ denote the parameter space of triples $(C,\zeta,p)$, where $C$ is a curve of class $\frac{d}{2} H$ on a Veronese surface, $\zeta,p$ are as in the first paragraph, and such that other hyperplanes have contact order $0,1,2,\cdots,r-1$ at $p$.
\end{defn}

We mention one dimension estimate that will be needed later.

\begin{lemma} \label{l_dimbound}
All irreducible components of $\Phi_{r,m,\eps}$ have dimension at least $\pi(r,d)+2m + \eps -2 + \dim \Aut \PP^r$.
\end{lemma}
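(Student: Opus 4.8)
The plan is to realize $\Phi_{r,m,\eps}$ as an open subset of a locus cut out by $d$ equations inside an irreducible variety of known dimension, and then invoke Krull's height theorem to bound the codimension of that locus from above (equivalently, its dimension from below). Since we only want a lower bound, this excess-dimension argument is exactly the right tool, and it automatically applies to every component.

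First I would introduce the auxiliary incidence variety
\[
\widetilde{\Phi}_{r,m,\eps} = \{(C,\zeta,p) : [C] \in \cH_{r,m,\eps},\ p \in C,\ \zeta \in (\PP^r)^* \},
\]
imposing no constraint on $\zeta$ or $p$. Since $\cH_{r,m,\eps}$ is irreducible (Theorem \ref{t_ciliberto}) and the universal curve over it has irreducible one-dimensional fibers, $\widetilde{\Phi}_{r,m,\eps}$ is irreducible; as the product of that universal curve with $(\PP^r)^*$ it has dimension $\dim \cH_{r,m,\eps} + 1 + r$. Substituting the value of $\dim \cH_{r,m,\eps}$ from Theorem \ref{t_ciliberto} gives $\dim \widetilde{\Phi}_{r,m,\eps} = \pi(r,d) + 2m + \eps + d - 2 + \dim \Aut \PP^r$.

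Next I would observe that $\Phi_{r,m,\eps}$ is the intersection, inside $\widetilde{\Phi}_{r,m,\eps}$, of the open locus where the remaining hyperplanes at $p$ have contact orders $0,1,\dots,r-1$ with the closed locus $Z$ where $\zeta$ has contact order at least $d$ at $p$; because $C$ has degree $d$, this last condition is equivalent to the set-theoretic equality $\zeta \cap C = \{p\}$ demanded in Definition \ref{d_Phirme}. The crux is that $Z$ is locally cut out by exactly $d$ equations: after trivializing $\cO_{\PP^r}(1)$ near $p$ and choosing a relative local coordinate $t$ on the universal curve, pulling $\zeta$ back to $C$ yields a local expansion $\sum_{i \geq 0} c_i\, t^i$ whose coefficients $c_i = c_i(C,\zeta,p)$ are regular functions on $\widetilde{\Phi}_{r,m,\eps}$, and contact order at least $d$ is precisely the simultaneous vanishing $c_0 = \dots = c_{d-1} = 0$. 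By Krull's height theorem, every irreducible component of $Z$ has codimension at most $d$ in $\widetilde{\Phi}_{r,m,\eps}$, and intersecting with the open locus preserves this bound on the surviving components.

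Hence every component of $\Phi_{r,m,\eps}$ has dimension at least $\dim \widetilde{\Phi}_{r,m,\eps} - d = \pi(r,d) + 2m + \eps - 2 + \dim \Aut \PP^r$, as claimed. The main obstacle is the bookkeeping in the middle step: checking that the $c_i$ really are regular functions on $\widetilde{\Phi}_{r,m,\eps}$ (so that $Z$ is literally the zero locus of $d$ functions, making Krull applicable) and that the defining data of $\Phi_{r,m,\eps}$ is captured by these $d$ equations together with an open condition. Crucially, since we seek only a lower bound, we never need the $d$ equations to be independent — any redundancy only enlarges the components, which is harmless.
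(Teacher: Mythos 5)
Your proposal is correct and is essentially the paper's own argument: the paper likewise forms the incidence variety of triples $(C,\zeta,p)$ with $[C] \in \cH_{r,m,\eps}$, $\zeta$ an arbitrary hyperplane, and $p \in C$ (of dimension $\pi(r,d)+2m+\eps+d-2+\dim\Aut\PP^r$ by Theorem \ref{t_ciliberto}), and then notes that the order-$d$ contact condition is cut out locally by $d$ equations, so imposing it drops the dimension by at most $d$. Your write-up merely makes explicit what the paper leaves implicit — the Taylor-coefficient equations, Krull's height theorem, and the openness of the remaining contact-order conditions — which is a fine elaboration rather than a different route.
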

\begin{proof}
By Theorem \ref{t_ciliberto}, the irreducible component $\cH_{r,m,\eps}$ of $\cH_{r,d}$ is irreducible of dimension $\pi(r,d)+2m+\eps+d-r-3 + \dim \Aut \PP^r$. Therefore the parameter space of triples $(C,\zeta,p)$, where $C$ is such a curve, $\zeta$ is \emph{any} hyperplane in $\PP^r$, and $p$ is point on $C$, has dimension $g+2m+\eps + d - 2 + \dim \Aut \PP^r$. The condition that $\zeta$ meets $C$ to order $d$ at $p$ is given locally by $d$ equations, hence imposing this condition decreasing the dimension by at most $d$. This gives the claimed bound.
\end{proof}

We wish to show that the parameter spaces $\Phi_{r,m,\eps}$ and $\Phi^{\textrm{ver}}_{5,d}$ are irreducible, and to calculate their dimensions exactly. We will do this by first fixing a particular surface and analyzing curves on that surface alone; this is the task of the next two sections.

%-----------
\section{Analysis on a fixed Veronese surface} \label{s_veronese}

The analysis of $\Phi^{\textrm{ver}}_{5,d}$ is easier than that of $\Phi^\delta_{r,m,\eps}$, so we present it first.

Let $d \geq 12$ be an even integer, and let $k = \frac12 d$ and $g = \binom{k-1}{2}$. We will analyze smooth curves of degree $k$ on $\PP^2$, with a marked point $p$ and contact orders $0,1,2,3,4,d$ with the linear system $|\cO_{\PP^2}(2)|$ of conics. Note that, when $\PP^2$ is embedded in $\PP^5$ as a Veronese surface, the linear system of conics becomes the linear system of hyperplane sections.

\begin{defn} \label{d_Psiver}
Denote by $\psiver$ the sub-variety of $|\cO_{\PP^2}(k)| \times |\cO_{\PP^2}(2)| \times \PP^2$ consisting of triples $(C,\zeta,p)$, where $C$ is a smooth degree $k$ curve, $p \in C$ is a point, and $\zeta$ is a degree $2$ divisor on $\PP^2$ pulling back to $d \cdot p$ on $C$, such that other elements of the linear system $|\cO_{\PP^2}(2)|$ meet $C$ to orders $0,1,2,3$, and $4$ at $p$.
\end{defn}

\begin{lemma} \label{l_verZetaSmooth}
For all $(C,\zeta,p) \in \psiver$, $\zeta$ is a smooth conic.
\end{lemma}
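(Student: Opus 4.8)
The plan is to rule out the two degenerate possibilities for the conic $\zeta$ — a pair of distinct lines and a double line — by exploiting the tight constraint that the contact hypotheses defining $\psiver$ place on \emph{every} conic, not just on $\zeta$. First I would record the following consequence of that definition. Since $C$ has degree $k \geq 6$, no conic contains $C$, so restriction gives an injection $H^0(\PP^2,\cO_{\PP^2}(2)) \hookrightarrow H^0(C,\cO_C(2))$, and $\cO_{\PP^2}(2)$ restricts to a $6$-dimensional linear system on $C$. The orders of vanishing at $p$ realized by nonzero conics therefore form the vanishing sequence of this system, which has exactly six terms (each step of the filtration by order of vanishing drops dimension by at most one). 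By the definition of $\psiver$ these terms are $0,1,2,3,4,d$. Hence every conic $Q$ satisfies $\operatorname{ord}_p(Q|_C) \in \{0,1,2,3,4,d\}$, with the value $d$ attained only by $\zeta$.

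Next, suppose toward a contradiction that $\zeta$ is singular. Then $\zeta = L_1 \cup L_2$ is a union of two lines, possibly equal (the double-line case). Because $\zeta \cap C = \{p\}$ set-theoretically and $L_1 \subseteq \zeta$, the line $L_1$ meets $C$ only at $p$; by Bézout the total intersection multiplicity of $L_1$ and $C$ equals $k$, so $\operatorname{ord}_p(L_1|_C) = k$. Now choose any line $M_0$ with $p \notin M_0$ and form the conic $Q_0 = L_1 \cdot M_0$. Since $M_0$ does not vanish at $p$ we have $\operatorname{ord}_p(M_0|_C) = 0$, and therefore
$$
\operatorname{ord}_p(Q_0|_C) = \operatorname{ord}_p(L_1|_C) + \operatorname{ord}_p(M_0|_C) = k.
$$

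Finally I would derive the contradiction. We have $k = \tfrac12 d$, and the standing assumption $d \geq 12$ gives $5 \leq k < d$, so $k \notin \{0,1,2,3,4,d\}$. This contradicts the first step, which forces every conic — in particular $Q_0$ — to have contact order in $\{0,1,2,3,4,d\}$. Hence $\zeta$ cannot be singular, and being an irreducible reduced conic it is smooth. The heart of the argument, and the only point requiring any idea, is the observation that one may detach a line component from a would-be degenerate $\zeta$ and reattach a generic line to manufacture a conic of intermediate contact order $k$; everything else is bookkeeping with Bézout and the vanishing-sequence dictionary. I do not anticipate a serious obstacle, since the numerical gap $4 < k < d$ guaranteed by $d \geq 12$ is exactly what makes the forbidden order $k$ available.
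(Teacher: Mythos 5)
Your proof is correct and takes essentially the same route as the paper's: extract a line component of the putative singular $\zeta$ (which meets $C$ only at $p$, hence to order $k$ by B\'ezout), pair it with a line avoiding $p$ to produce a conic of contact order $k$, and contradict $4 < k < d$. Your preliminary vanishing-sequence argument just makes explicit why every conic has contact order in $\{0,1,2,3,4,d\}$, a fact the paper reads off directly from the definition of $\psiver$.
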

\begin{proof}
Suppose that $\zeta$ is not a smooth conic. Then it must be either a pair of lines or a double line. It follows that there exists a line meeting $C$ to order $k$ at $p$. By choosing another degree $2$ divisor $\zeta'$ consisting of a line meeting $C$ to order $k$ at $p$ and a line not meeting $p$, we see that $k$ is a contact order of the linear system of conics at $p$. But $4 < k < d$, so this is impossible.
\end{proof}

\begin{lemma} \label{l_verContacts}
If $C$ is a smooth degree $k$ curve meeting a smooth conic $\zeta$ to order $d$ at $p$, then there exist other elements of $|\cO_{\PP^2}(2)|$ meeting $C$ to orders $0,1,2,3$, and $4$ at $p$. In other words, $(C,\zeta,p) \in \psiver$.
\end{lemma}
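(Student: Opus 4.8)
The plan is to work in local analytic coordinates at $p$ and reduce the statement to a single linear-algebra assertion about conics, the only genuine input being the smoothness of $\zeta$. Choose affine coordinates $(x,y)$ centered at $p$ so that the tangent line to $C$ at $p$ is $\{y=0\}$, and parametrize the branch of $C$ by $t \mapsto (t,\phi(t))$ with $\phi(t) = a_2 t^2 + a_3 t^3 + a_4 t^4 + \cdots$. Restricting a conic $Q = \sum_{i+j\le 2}\alpha_{ij}x^iy^j$ to this parametrization, the Taylor coefficients of $Q(t,\phi(t))$ through degree $4$ are linear functionals of the $\alpha_{ij}$: one computes $L_0 = \alpha_{00}$, $L_1 = \alpha_{10}$, $L_2 = \alpha_{20} + a_2\alpha_{01}$, $L_3 = a_2\alpha_{11} + a_3\alpha_{01}$, and $L_4 = a_2^2\alpha_{02} + a_3\alpha_{11} + a_4\alpha_{01}$. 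Since $\deg C = k > 2$, no conic contains $C$, so the $6$-dimensional space of conics injects into $H^0(C,\cO_C(d\cdot p))$; writing $V_n$ for the conics vanishing to order $\ge n$ along $C$ at $p$, we have $V_n = \bigcap_{i<n}\ker L_i$ for $n\le 5$. Thus it suffices to show that $L_0,\dots,L_4$ are linearly independent: then $\dim V_j = 6-j$ for $j\le 5$, so for each $j\in\{0,1,2,3,4\}$ there is a conic of contact order exactly $j$, necessarily distinct from $\zeta$ since $\zeta$ has contact $d>4$.

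First I would dispose of the easy case. If $a_2 \neq 0$, then the variables $\alpha_{00},\alpha_{10},\alpha_{20},\alpha_{11},\alpha_{02}$ appear as leading variables of $L_0,L_1,L_2,L_3,L_4$ with nonzero coefficients $1,1,1,a_2,a_2^2$, so a triangular argument gives independence, and the lemma follows from the reduction above.

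The hard part is therefore to rule out $a_2 = 0$, i.e.\ to show that $p$ is not a flex of $C$, and this is exactly where the smoothness hypothesis on $\zeta$ is used. Suppose $a_2 = 0$. Because $\zeta$ meets $C$ to order $d\ge 2$, it passes through $p$ and is tangent to $\{y=0\}$ there, so its equation has the form $\zeta = \zeta_{01}y + \zeta_{20}x^2 + \zeta_{11}xy + \zeta_{02}y^2$ with $\zeta_{01}\neq 0$ (otherwise $p$ would be a singular point of $\zeta$). The $t^2$-coefficient of $\zeta(t,\phi(t))$ equals $L_2$ evaluated at $\zeta$, namely $\zeta_{20} + a_2\zeta_{01} = \zeta_{20}$; contact order $\ge 3$ forces $\zeta_{20}=0$, whence $\zeta = y\,(\zeta_{01} + \zeta_{11}x + \zeta_{02}y)$ is reducible, contradicting that $\zeta$ is a smooth conic. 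Hence $a_2\neq 0$, and the lemma follows. I expect the flex exclusion to be the only nonroutine step: the functional computation and triangular independence are mechanical, whereas this step is what genuinely requires $\zeta$ to be irreducible.
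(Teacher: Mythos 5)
Your proof is correct, but it takes a genuinely different route from the paper's, which is synthetic and only two sentences long. The paper argues: since $C$ agrees with $\zeta$ to order $d>2$ at $p$, the tangent line $T$ to $C$ at $p$ is also tangent to $\zeta$, and $T$ can meet the smooth conic $\zeta$ to order at most $2$ (B\'ezout); hence $T$ meets $C$ to order exactly $2$ --- this is the flex exclusion, the same fact as your $a_2 \neq 0$. It then produces the required conics explicitly as pairs of lines: two lines missing $p$ (order $0$), a transverse line plus a line missing $p$ (order $1$), $T$ plus a line missing $p$ (order $2$), $T$ plus a transverse line (order $3$), and $T$ doubled (order $4$). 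So both arguments hinge on the same geometric point --- $p$ is not a flex of $C$, which is exactly where smoothness (equivalently irreducibility) of $\zeta$ enters --- but you establish it by a local factorization (if $a_2=0$ then $\zeta$ is divisible by $y$, hence reducible) where the paper invokes B\'ezout, and you then obtain the contact orders $0,1,2,3,4$ by a linear-independence and flag-dimension count where the paper simply exhibits reducible conics. Your method yields a little more as a byproduct (the spaces of conics with contact order $\geq j$ have exact dimension $6-j$ for $j \leq 5$, so in particular $\zeta$ is the unique conic meeting $C$ to order $\geq 5$ at $p$), at the cost of coordinate computations; the paper's construction is shorter and makes the witnessing conics visible, and it parallels the scroll-case argument in its Lemma on contacts for Hirzebruch surfaces, where the same two-step structure (restrict to $\zeta$, then lift contact orders from $\zeta$ to $C$) is used.
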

\begin{proof}
The tangent line to $C$ at $p$ must have contact order $2$ with $C$, since $C$ matches $\zeta$ to order greater than $2$ and the tangent line can only meet $\zeta$ to order $2$. Choosing elements of $|\cO_{\PP^2}(2)|$ consisting of a pair of lines, each either disjoint from $p$, transverse to $C$ at $p$, or tangent to $C$ at $p$, we obtain degree $2$ divisors on $\PP^2$ meeting $C$ to order $0,1,2,3,4$ at $p$.
\end{proof}

\begin{cor} \label{c_verPsiDesc}
A triple $(C,\zeta,p) \in |\cO_{\PP^2}(k)| \times |\cO_{\PP^2}(2)| \times \PP^2$ lies in $\psiver$ if and only if $C$ is smooth, $p \in C$, and $C$ meets $\zeta$ to order $d$ at $p$.
\end{cor}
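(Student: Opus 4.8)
The plan is to prove the two implications separately, with essentially all of the substance in the reverse direction. The forward implication is immediate from Definition \ref{d_Psiver}: a triple in $\psiver$ has $C$ smooth, $p \in C$, and $\zeta$ a degree $2$ divisor pulling back to $d \cdot p$ on $C$, which is exactly the statement that $C$ meets $\zeta$ to order $d$ at $p$. For the converse, suppose $C$ is smooth, $p \in C$, and $C$ meets $\zeta$ to order $d$ at $p$. Since $\deg C = k$ and $\deg \zeta = 2$, Bezout gives $C \cdot \zeta = 2k = d$, so the whole intersection is concentrated at $p$ and $\zeta$ pulls back to exactly $d \cdot p$. The content of the corollary is then that the remaining clause of Definition \ref{d_Psiver} -- the existence of conics of contact orders $0,1,2,3,4$ at $p$ -- comes for free; and by Lemma \ref{l_verContacts} it does come for free as soon as we know that $\zeta$ is a \emph{smooth} conic. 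So the entire task reduces to showing that a degree $2$ divisor of contact order $d$ at $p$ on a smooth $C$ must itself be smooth.

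First I would eliminate the reduced reducible case. If $\zeta = L_1 + L_2$ were a pair of distinct lines, then $d = 2k$ would be the sum of the two contact orders $\operatorname{ord}_p(L_1|_C)$ and $\operatorname{ord}_p(L_2|_C)$; each of these is at most $k$ by Bezout, so both equal $k$. But a line of contact order at least $2$ at $p$ must be \emph{the} tangent line to $C$ at $p$, which is unique; this forces $L_1 = L_2$ and contradicts distinctness. Thus $\zeta$ is not a pair of distinct lines.

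The main obstacle is the non-reduced case $\zeta = 2L$, which I expect to be the hardest step by far. Here contact order $d$ forces $2\operatorname{ord}_p(L|_C) = 2k$, so $\operatorname{ord}_p(L|_C) = k$: the tangent line $L$ would meet the smooth curve $C$ to the maximal possible order $k$ at $p$, a total inflection. Ruling this out cannot be achieved by a crude dimension or Bezout count, so my proposed attack is local. In local coordinates $(u,v)$ at $p$ with $L = \{v=0\}$ the tangent line, the six basis sections $1, u, v, u^2, uv, v^2$ of $|\cO_{\PP^2}(2)|$ restrict to $C$ with orders $0,1,2,k,k+1,2k$, so the double line $2L$ is forced to be the \emph{unique} conic of contact $d$. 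The crux is to show that this local profile cannot actually arise under the hypotheses -- equivalently, to promote the argument of Lemma \ref{l_verContacts} (which uses a smooth $\zeta$ to force the tangent line to have contact order $2$, since a line meets a smooth conic to order at most $2$) into a statement that the mere existence of a degree $2$ divisor of contact $d$ at $p$ already forces tangent contact $2$, whence $\operatorname{ord}_p(L|_C) = k > 2$ is impossible and $\zeta$ must be smooth. Since $4 < k < d$, this is precisely the configuration that separates the Veronese stratum from its neighbors, and pinning down why it is excluded is the heart of the matter. Once $\zeta$ is known to be a smooth conic, Lemma \ref{l_verContacts} supplies the conics of contact orders $0,1,2,3,4$ and places $(C,\zeta,p)$ in $\psiver$, completing the reverse implication.
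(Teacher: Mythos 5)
You have the right skeleton, and most of it is correct: the forward implication is indeed immediate from Definition \ref{d_Psiver}, Bezout does concentrate the intersection at $p$, Lemma \ref{l_verContacts} disposes of the case of smooth $\zeta$, and your uniqueness-of-the-tangent-line argument genuinely eliminates the case of two distinct lines. The gap is exactly the step you flag as the heart of the matter, and it is fatal: the double-line configuration cannot be excluded, because it occurs. For every $k \geq 6$ there exist smooth plane curves of degree $k$ with a total inflection. Concretely, consider the linear system spanned by $x^k$ and $y \cdot H^0(\PP^2, \cO_{\PP^2}(k-1))$: its base locus is the single point $p = [0:0:1]$, so by Bertini a general member is smooth away from $p$; since the member $x^k + y z^{k-1}$ is smooth at $p$, a general member $C$ is smooth everywhere, and restricting to $L = \{y=0\}$ shows that $L$ is the tangent line at $p$ and meets $C$ to order $k$ there. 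Taking $\zeta = 2L$, the triple $(C,\zeta,p)$ satisfies every condition on the right-hand side of the Corollary, yet --- exactly as your own local computation shows --- the contact orders of conics with $C$ at $p$ are $\{0,1,2,k,k+1,2k\}$, so no conic meets $C$ to order $3$ or $4$ at $p$, and $(C,\zeta,p) \notin \psiver$. The backward implication is therefore false as literally stated, and no refinement of Lemma \ref{l_verContacts} can rescue it.

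What your attempt reveals is that the statement itself needs amending: the correct assertion is that $(C,\zeta,p) \in \psiver$ if and only if $C$ is smooth, $p \in C$, $\zeta$ is a \emph{smooth} conic, and $C$ meets $\zeta$ to order $d$ at $p$ (given smoothness of $C$ and the order-$d$ contact, reducedness of $\zeta$ would suffice, by your distinct-lines argument). The paper gives no proof of this Corollary, treating it as the combination of Lemmas \ref{l_verZetaSmooth} and \ref{l_verContacts}; but Lemma \ref{l_verZetaSmooth} takes membership in $\psiver$ as its hypothesis, so it is available only in the forward direction, and the published statement has precisely the defect you ran into. The error is harmless downstream: in the proof of Proposition \ref{p_verPsiDim} the Corollary is invoked only for a fixed smooth conic $\zeta$, where it reduces to Lemma \ref{l_verContacts}, while Lemma \ref{l_verZetaSmooth} separately ensures that $\psiver$ lies over the locus of smooth conics. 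Moreover the excluded triples $(C,2L,p)$ ought to be excluded: at a total inflection, ratios of conics by $\zeta$ give functions regular away from $p$ with pole orders $k-1$ and $k$, so the Weierstrass semigroup of such a point is not $S_{5,d}$.
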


\begin{prop} \label{p_verPsiDim}
Let $m,\eps$ be the quotient and remainder when $d-1$ is divided by $r-1$. The variety $\psiver$ is irreducible of dimension $g+2m + \frac{\eps-1}{2} + 6$.
\end{prop}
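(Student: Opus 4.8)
The plan is to realize $\psiver$ as a dense open subset of a projective bundle over the space of pointed smooth conics, and then to extract the dimension by an elementary count. By Corollary \ref{c_verPsiDesc}, a triple $(C,\zeta,p)$ lies in $\psiver$ exactly when $C$ is a smooth degree $k$ curve, $p\in C$, and $C$ meets $\zeta$ to order $d$ at $p$; by Lemma \ref{l_verZetaSmooth} the conic $\zeta$ is automatically smooth. I would therefore begin by forming the base $B$, the incidence variety of pairs $(\zeta,p)$ with $\zeta$ a smooth conic and $p\in\zeta$. This is a $\PP^1$-bundle over the open locus of smooth conics in $|\cO_{\PP^2}(2)|=\PP^5$, hence irreducible of dimension $6$.

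Next, fixing $(\zeta,p)$, I would describe the degree $k$ curves meeting $\zeta$ to order at least $d$ at $p$ using the restriction sequence to $\zeta\cong\PP^1$, namely $0\to H^0(\cO_{\PP^2}(k-2))\to H^0(\cO_{\PP^2}(k))\to H^0(\cO_\zeta(d))\to 0$, which is exact on the right since $H^1(\cO_{\PP^2}(k-2))=0$. The sections of $\cO_\zeta(d)\cong\cO_{\PP^1}(d)$ vanishing to order $d$ at $p$ span a line, so such curves constitute a $\PP^{\binom{k}{2}}$, using $\binom{k+2}{2}-d=\binom{k}{2}+1$. Because the restriction map has constant rank and the distinguished line varies algebraically with $(\zeta,p)$, these spaces fit together into a rank $\binom{k}{2}+1$ vector bundle $\cV$ on $B$, realized as the kernel of the surjection from $H^0(\cO_{\PP^2}(k))\otimes\cO_B$ onto the rank $d$ bundle with fibre $H^0(\cO_\zeta(d))$ modulo the distinguished line. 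Thus these curves are parametrized by $\PP(\cV)$, irreducible of dimension $6+\binom{k}{2}$.

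The remaining and hardest point is that $\psiver$ is a dense open subset of $\PP(\cV)$, that is, that a general member of each fibre is a smooth curve. The difficulty is that the marked point $p$ is a base point of the linear system cutting out a fibre, so Bertini alone only yields smoothness away from $p$. I would dispose of $p$ by a direct local computation: with $\zeta\colon y=x^2$ and $p=(0,0)$, the fibre consists of the curves $c\,y^k+(y-x^2)g$ with $\deg g\le k-2$, and the linear part at $p$ is $g(p)\,y$, so whenever $g(p)\ne 0$ the general member is smooth at $p$ with tangent line equal to that of $\zeta$. Verifying that $p$ is the only base point, including on the line at infinity, then lets Bertini provide smoothness elsewhere; and since $B$ is homogeneous under $\Aut\PP^2$ and the entire construction is equivariant, checking a single $(\zeta,p)$ suffices. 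Hence the smooth locus is nonempty and open, so $\psiver$ is irreducible of dimension $6+\binom{k}{2}$.

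It then remains to match this with the claimed formula. A smooth plane curve of degree $k$ has genus $g=\binom{k-1}{2}$ (this equals the semigroup genus $\pi(5,d)$), and $\binom{k}{2}-\binom{k-1}{2}=k-1$, so $\dim\psiver=g+(k-1)+6$; finally $d-1=4m+\eps$ together with $k=\tfrac{d}{2}$ gives $k-1=\tfrac{d-2}{2}=2m+\tfrac{\eps-1}{2}$, yielding $\dim\psiver=g+2m+\tfrac{\eps-1}{2}+6$ as asserted.
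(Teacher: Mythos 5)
Your proposal is correct and follows essentially the same route as the paper: both arguments fiber over the space of pairs $(\zeta,p)$ with $\zeta$ a smooth conic and $p \in \zeta$, identify the fiber as a codimension-$d$ linear subsystem of $|\cO_{\PP^2}(k)|$ via the restriction sequence and the vanishing of $H^1(\PP^2,\cO_{\PP^2}(k-2))$, and obtain smoothness of the general member by Bertini away from the base point $p$ together with special members smooth at $p$ (your local computation with $\zeta\colon y=x^2$ exhibits exactly the paper's reducible members $\zeta + C'$ with $C'$ missing $p$). Your explicit projective-bundle packaging of the fibration, and the final identity $g=\binom{k-1}{2}=\pi(5,d)$, are if anything slightly more careful than the paper's treatment, but the substance is identical.
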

\begin{proof}
Fix a smooth conic $\zeta$ and a point $p \in \zeta$. The linear system $|\cO_{\PP^2}(k)|$ pulls back to $\zeta$ as the complete linear system of degree $d$, since $h^1(\PP^2,\cO_{\PP^2}(k-2)) = 0$ \cite[Theorem 5.1]{hartshorne}. So there is a codimension $d$ sub-series $\mathfrak{d}$ of $|\cO_{\PP^2}(k)|$ consisting of divisors $C$ that either meet $\zeta$ to order $d$ at $p$ or contain $\zeta$ entirely.

The only base points of $\mathfrak{d}$ must lie on $\zeta$, since $\mathfrak{d}$ includes all divisors of the form $\zeta + C'$, where $C'$ is a degree $k-2$ curve, and $|\cO_{\PP^2}(k-2)|$ is base point free. Since $\mathfrak{d}$ includes some elements not containing $\zeta$ entirely, and hence meeting $\zeta$ only at $p$ itself, the only base point of $\mathfrak{d}$ is the point $p$.

Bertini's theorem \cite[Corollary 10.9]{hartshorne} implies that a general element of $\mathfrak{d}$ is smooth, except possibly at the point $p$. But the elements $\zeta + C'$ are smooth at $p$ unless $C'$ meets $p$, so a general element of $\mathfrak{d}$ is smooth at $p$ itself. Hence $\mathfrak{d}$ has a dense open subset of smooth curves.

By Corollary \ref{c_verPsiDesc}, a dense open subset of $\mathfrak{d}$ is a fiber of the forgetful map
$$
\psiver \rightarrow \{ (\zeta,p):\ \zeta \in |\cO_{\PP^2}(2)| \mbox{ smooth, } p \in \zeta \}.
$$
Hence each fiber of this map is irreducible of dimension $h^0(\cO_{\PP^2}(k)) - 1 - d = \binom{k+2}{2} - 2k - 1$. Therefore $\psiver$ is irreducible of dimension $\binom{k+2}{2} - 2k + 5$. 
%\begin{eqnarray*}
%\binom{k+2}{2} - 2k + 5 &=& \binom{k-1}{2} + (k-1) + k + (k+1) -2k + 5\\
%&=& g + k + 5\\
%&=& g + \frac12 d + 5\\
%&=& g+ \frac12 (4m + \eps + 1) + 5\\
%&=& g + 2m + \frac{\eps-1}{2} + 1 + 5\\
%&=& g + 2m + \frac{\eps-1}{2} + 6
%\end{eqnarray*}
This is equal to $g+2m + \frac{\eps-1}{2} + 6.$
\end{proof}

%-----------
\section{Analysis on a fixed Hirzebruch surface} \label{s_hirzebruch}

After reviewing some basic information about Hirzebruch surfaces, we perform in this section an analysis on Hirzebruch surfaces similar to the analysis performed in Section \ref{s_veronese} on Veronese surfaces. In contrast to Section \ref{s_veronese}, we will not give an exact dimension calculation in all cases; instead we will give an exact calculation when $\delta \in \{0,1\}$ and an upper bound in the other cases. This upper bound will be enough to show that the locus in $\Phi_{r,m,\eps}$ for which $C$ lies on a balanced scroll is dense.

\subsection{Preliminaries on $\FF_\delta$} \label{ss_fdPrelims}
Rational normal scrolls with invariant $\delta$ are convenient to analyze intrinsically as the Hirzebruch surfaces $\FF_\delta$. A convenient representation of $\FF_\delta$ is as the toric surface defined by the complete fan with $1$-dimensional rays spanned by $(1,0),(0,1),(-1,\delta)$, and $(0,-1)$ (see \cite{fulton} for background on toric varieties; the case of $\FF_\delta$ is discussed on p. 7). The surface is the union of four affine charts: $U_1= \Spec \CC[x,y],\ 
U_2 = \Spec \CC[x,y^{-1}],\ 
U_3 = \Spec \CC[x^{-1}, x^{-\delta} y^{-1}],$ and 
$U_4 = \Spec \CC[x^{-1}, x^\delta y],$
where any two of these charts are glued along the spectrum of the span of their coordinate rings within $\CC[x^{\pm 1}, y^{\pm 1}]$ in the obvious way.

There is a dense open torus $\Spec \CC[x^{\pm 1}, y^{\pm 1}]$ in $\fd$. If $a,b$ are nonnegative integers with $b-a = \delta$, there is a map from this torus to the scroll $\scroll_{a,b}$ (see page \pageref{pg_sc}), given on the level of rings by $X_i / X_0 \mapsto x^i,\ Y_i / X_0 \mapsto x^i y$. It is routine to check that this map extends to a morphism $\fd \rightarrow \scroll_{a,b}$, and that this map is either an isomorphism (if $a > 0$) or the blow-up of the cone point $[1,0,\cdots,0]$ (if $a=0$). See also Corollary \ref{c_hirzRNS}.

The complement of the torus consists of four smooth rational curves: the zero and pole locus of the rational function $x$, which we denote $F$ and $F'$ respectively; the closure of the $x$-axis in $U_1$, which we denote by $D$ and call the \emph{directrix}, and the closure of $V(y^{-1}) \subset U_2$, which we denote by $D'$. The rational functions $x,y$ show that $F \sim F'$ and $D' \sim D + \delta F$. In fact, the Picard group of $\fd$ is free on generators $D,F$ (see \cite[p. 70]{fulton}), but we do not need this fact.

There is a projection $\pi: \fd \rightarrow D$ to the directrix given by $(x,y) \mapsto (x,0)$ on $U_1$; similar expressions may be found in the other charts. The fibers of this projection are linearly equivalent smooth rational curves (including $F$ and $F'$), which we will call the \emph{ruling lines}.

The global sections of the line bundle $\cO_{\FF_\delta} (\alpha D + \beta F)$, regarded as rational functions regular away from the divisors $D$ and $F$, must be regular on the torus (i.e. lie in $\CC[ x^{\pm1}, y^{\pm 1} ]$), and analyzing orders of vanishing on the boundary shows that (cf. \cite[Lemma on p. 66]{fulton})
\begin{eqnarray*}
H^0(\FF_\delta, \cO_{\FF_\delta}(\alpha D + \beta F)) &=& \sum_{(i,j) \in P_{\alpha,\beta}} \mathrm{span} (x^{-i} y^{-j})\\
\textrm{where } P_{\alpha,\beta} &=& \left\{ (i,j) \in \ZZ^2:\ 0 \leq j \leq \alpha \mbox{ and } j \delta \leq i \leq \beta \right\}.
\end{eqnarray*}

\begin{cor} \label{c_h0}
The dimension of the space of global sections of $\cO_{\FF_\delta}(\alpha D + \beta F)$ is
$$
h^0(\fd, \cO_{\fd}(\alpha D + \beta F)) = 
\displaystyle \sum_{j=0}^\alpha \max (0, \beta + 1 - j \delta ),
$$
where this sum is understood to be $0$ when $\alpha < 0$.
\end{cor}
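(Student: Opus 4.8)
The plan is to read off $h^0$ directly from the explicit description of $H^0(\fd, \cO_{\fd}(\alpha D + \beta F))$ given in the display immediately preceding the corollary, thereby reducing the claim to an elementary count of lattice points in the set $P_{\alpha,\beta}$.

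First I would observe that the rational functions $x^{-i}y^{-j}$ listed in that formula are pairwise-distinct monomials in the Laurent polynomial ring $\CC[x^{\pm 1}, y^{\pm 1}]$, and distinct monomials are linearly independent over $\CC$. Hence they form a basis of the space of global sections, and $h^0(\fd, \cO_{\fd}(\alpha D + \beta F))$ is simply the cardinality of the index set $P_{\alpha,\beta}$.

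Next I would carry out the count by slicing $P_{\alpha,\beta}$ along its second coordinate. For each fixed integer $j$, the defining inequalities allow a contribution only when $0 \leq j \leq \alpha$, and for such $j$ the admissible values of $i$ are exactly the integers with $j\delta \leq i \leq \beta$. The number of these is $\beta + 1 - j\delta$ when $\beta \geq j\delta$ and is $0$ otherwise, which is precisely $\max(0, \beta + 1 - j\delta)$. Summing over $j$ from $0$ to $\alpha$ then yields the stated expression.

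Finally I would dispose of the degenerate case: if $\alpha < 0$ the range $0 \leq j \leq \alpha$ is empty, so $P_{\alpha,\beta} = \emptyset$ and $h^0 = 0$, in agreement with the convention that the sum is $0$. There is no genuine obstacle here, since the substantive work — identifying $H^0$ with the span of an explicit monomial basis — is already accomplished in the computation preceding the corollary; the only points requiring care are the boundary cases, and these are handled uniformly by the $\max$ and the empty-sum convention.
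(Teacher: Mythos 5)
Your proposal is correct and is essentially the paper's own argument: the corollary is stated without a separate proof precisely because it follows immediately from the displayed description of $H^0(\fd, \cO_{\fd}(\alpha D + \beta F))$ as the span of the monomials $x^{-i}y^{-j}$ indexed by $P_{\alpha,\beta}$, and your lattice-point count (slicing by $j$, with the $\max$ handling $\beta < j\delta$ and the empty-sum convention handling $\alpha < 0$) is exactly the intended computation. Your explicit remark that distinct Laurent monomials are linearly independent, so that the cardinality of $P_{\alpha,\beta}$ really is the dimension, is a small but welcome point of care that the paper leaves implicit.
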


The following Corollary follows in a straightforward manner (cf. \cite[Theorem 2.17, Corollary 2.18]{hartshorne}).

\begin{cor} \label{c_bpf}
The complete linear series $|\alpha D  + \beta F|$ is nonempty if and only if $\alpha$ and $\beta$ are both nonnegative. The complete linear series $| \beta F|$ ($\beta \geq 0$) consists of all sums of $\beta$ ruling lines. The complete linear series $|\alpha D + \beta F|$, where $\alpha,\beta \geq 0$, is base point free if $\beta \geq \delta \alpha$, and has base locus equal to $D$ if $0 \leq \beta < \delta \alpha$.
\end{cor}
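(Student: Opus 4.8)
The plan is to work directly with the explicit monomial description of global sections recorded just before Corollary~\ref{c_h0}, combined with the four-chart structure of $\fd$. A section of $\cO_{\fd}(\alpha D + \beta F)$ is the Laurent monomial $x^{-i}y^{-j}$ for $(i,j)$ a lattice point of $P_{\alpha,\beta}$, and its divisor of zeros is $\operatorname{div}(x^{-i}y^{-j}) + \alpha D + \beta F$, which I would compute from $\operatorname{div}(x) = F - F'$ and the analogous expression for $\operatorname{div}(y)$ in terms of $D, D', F, F'$. The nonemptiness claim is then immediate from the shape of $P_{\alpha,\beta}$: it contains $(0,0)$ as soon as $\alpha, \beta \geq 0$, whereas if $\alpha < 0$ there is no admissible $j$, and if $\alpha \geq 0 > \beta$ the constraints $j \geq 0$ and $\delta j \leq i \leq \beta$ are inconsistent.

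For the series $|\beta F|$ (the case $\alpha = 0$) I would use the ruling projection $\pi \colon \fd \to D \cong \PP^1$, whose fibers are the members of $|F|$; thus $\cO_{\fd}(\beta F) = \pi^\ast \cO_{\PP^1}(\beta)$. The monomial basis $1, x^{-1}, \dots, x^{-\beta}$ of the global sections is exactly the pullback under the coordinate $x$ of a basis of $H^0(\PP^1, \cO_{\PP^1}(\beta))$, so every member of $|\beta F|$ is the $\pi$-preimage of a degree-$\beta$ divisor on $\PP^1$, that is, a sum of $\beta$ ruling lines; conversely any such sum lies in $|\beta F|$ since all fibers of $\pi$ are linearly equivalent to $F$.

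The substance is the base-locus dichotomy for $\alpha, \beta \geq 0$. The key computation is the coefficient of $D$ in the divisor of $x^{-i}y^{-j}$, which comes out to $\alpha - j$; since $j \leq \alpha$ on $P_{\alpha,\beta}$ every section is effective, and the fixed component along $D$ has coefficient $\alpha - \max\{\, j : (i,j) \in P_{\alpha,\beta}\,\}$. Because a lattice point of $P_{\alpha,\beta}$ forces $\delta j \leq \beta$, this maximum equals $\alpha$ precisely when $\beta \geq \delta\alpha$; when $\beta < \delta\alpha$ (which forces $\delta \geq 1$) the maximum is $\lfloor \beta/\delta \rfloor < \alpha$, so $D$ appears in every member and hence lies in the base locus. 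To see there are no further base points—and none at all once $\beta \geq \delta\alpha$—I would exhibit on each chart $U_k$ a monomial section that is a nonvanishing unit there, namely one whose divisor $\operatorname{div}(x^{-i}y^{-j}) + \alpha D + \beta F$ is supported on boundary curves disjoint from $U_k$. The monomials $1$ and $x^{-\beta}$ serve on $U_3$ and $U_2$ for all $\alpha, \beta \geq 0$; since $U_2 \cup U_3 = \fd \setminus D$, there are never base points off $D$. The monomials $x^{-\beta}y^{-\alpha}$ and $x^{-\delta\alpha}y^{-\alpha}$ serve on $U_1$ and $U_4$, but these lie in $P_{\alpha,\beta}$—and so are honest global sections—exactly when $\beta \geq \delta\alpha$. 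Together these give base-point-freeness for $\beta \geq \delta\alpha$ and base locus equal to $D$ for $\beta < \delta\alpha$.

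The one step needing care, and the main obstacle, is the bookkeeping that converts ``the corner monomial lies in $P_{\alpha,\beta}$'' into ``its section is nowhere zero on the chart'': this requires writing out $\operatorname{div}(y)$ and recording, for each boundary curve $F, F', D, D'$, which of the four charts it meets (namely those indexed by a cone containing its ray). Once these incidences are tabulated, all three parts drop out by inspection. As a cross-check, identifying $\fd$ with the geometrically ruled surface $\PP(\cO_{\PP^1} \oplus \cO_{\PP^1}(-\delta))$, whose directrix $D$ has self-intersection $-\delta$, the assertion is precisely \cite[Theorem 2.17, Corollary 2.18]{hartshorne}.
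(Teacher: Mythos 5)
Your proposal is correct, and it is essentially the paper's own (largely implicit) argument: the paper dismisses the corollary as following ``in a straightforward manner'' from the toric monomial description of $H^0(\fd, \cO_{\fd}(\alpha D + \beta F))$ preceding Corollary~\ref{c_h0}, with \cite[Theorem 2.17, Corollary 2.18]{hartshorne} cited only as a cross-reference, and your computation of $\operatorname{div}(x^{-i}y^{-j}) + \alpha D + \beta F$ together with the chart-by-chart exhibition of nonvanishing monomial sections is precisely the filling-in of that derivation. The bookkeeping you flag (the coefficient $\alpha - j$ along $D$, the identity $U_2 \cup U_3 = \fd \setminus D$, and the corner monomials lying in $P_{\alpha,\beta}$ exactly when $\beta \geq \delta\alpha$) all checks out.
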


\begin{cor} \label{c_hirzRNS}
If $a,b$ are nonnegative integers with $b-a = \delta$, then the map $\fd \rightarrow \scroll_{a,b}$ described above is $\phi_{|D+ b\cdot F|}$.
\end{cor}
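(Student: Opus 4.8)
The plan is to realize both maps as given by subspaces of the function field $\CC(x,y)$ of $\fd$, and to show these two subspaces differ only by multiplication by a single rational function, which does not affect the induced map to projective space. On one side, the morphism $\phi_{|D+bF|}$ is by definition given by the complete linear system $H^0(\fd,\cO_{\fd}(D+bF))$; Corollary \ref{c_bpf} shows this system is base point free, since $b \geq \delta = b-a$ as $a \geq 0$, so $\phi_{|D+bF|}$ is a genuine morphism, and the explicit description of sections preceding Corollary \ref{c_h0} furnishes the monomial basis $\{x^{-i}y^{-j} : (i,j) \in P_{1,b}\}$. On the other side, the map $\fd \to \scroll_{a,b}$ was defined on the dense torus by $X_i/X_0 \mapsto x^i$ and $Y_i/X_0 \mapsto x^i y$, i.e. it is the map associated to the functions $\{x^i : 0 \le i \le a\} \cup \{x^j y : 0 \le j \le b\}$.

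First I would record that $P_{1,b} = \{(i,0) : 0 \le i \le b\} \cup \{(i,1) : \delta \le i \le b\}$, so that the standard basis of $H^0(\fd,\cO_{\fd}(D+bF))$ consists of the functions $x^{-i}$ for $0 \le i \le b$ together with $x^{-i}y^{-1}$ for $\delta \le i \le b$. Then I would multiply every function in the scroll's defining set by the single rational function $x^{-b}y^{-1}$ and check the outcome: the functions $x^j y$ become $x^{j-b}$, yielding exactly the $j=0$ part $\{x^{-i} : 0 \le i \le b\}$, while the functions $x^i$ become $x^{i-b}y^{-1}$, yielding the $j=1$ part $\{x^{-i}y^{-1} : \delta \le i \le b\}$. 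Here the defining relation $b - a = \delta$ is precisely what aligns the two ranges. Thus the scroll's function set is $x^b y$ times the standard basis of $H^0(\fd,\cO_{\fd}(D+bF))$, so scaling all sections by the common nonzero factor $x^b y$ identifies the two linear systems; consequently the scroll map and $\phi_{|D+bF|}$ define the same rational map on the torus.

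Finally I would upgrade this agreement on the dense torus to an equality of morphisms. The map $\phi_{|D+bF|}$ is a morphism by base point freeness, and the map to $\scroll_{a,b}$ was already observed in the preliminaries to extend to a morphism on all of $\fd$; since $\fd$ is reduced and $\PP^r$ is separated, two morphisms agreeing on a dense open set coincide, completing the proof.

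The only substantive content is the exponent bookkeeping in the middle step, where one must confirm that multiplication by $x^{-b}y^{-1}$ carries the $(a+1)+(b+1)$ scroll functions bijectively onto the lattice points of $P_{1,b}$; this is exactly where the hypothesis $a + \delta = b$ is used, and where an off-by-one slip would be easiest to make. An alternative route would identify $\phi^*\cO_{\PP^r}(1)$ directly by computing the vanishing orders of a pulled-back coordinate such as $X_0$ along the four boundary curves $F, F', D, D'$ in the toric charts (arriving at $aF' + D' \sim D + bF$) and then verifying $h^0 = r+1$ via Corollary \ref{c_h0}; but the monomial-matching argument is preferable because it packages both the identification of the line bundle class and the completeness of the linear system into a single computation.
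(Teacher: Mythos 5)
Your proposal is correct, and it is essentially the argument the paper intends: the corollary is stated without proof precisely because it follows from the monomial description of $H^0(\fd, \cO_{\fd}(\alpha D + \beta F))$ given just before it, and your computation (matching the scroll's defining functions $\{x^i\} \cup \{x^j y\}$ with the lattice points of $P_{1,b}$ after scaling by $x^{-b}y^{-1}$, then extending the agreement from the torus by separatedness) fills in exactly that bookkeeping. The exponent ranges check out, including the use of $b - a = \delta$ to align the $j=1$ row of $P_{1,b}$ with the $X$-coordinates.
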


The curves $F$ and $D$ intersect transversely at one point, $F$ is disjoint from $F'$, and $D$ is disjoint from $D'$. The intersection products $D^2 = -\delta,\ D.F = 1,\ F^2 = 0$ follow from this. The canonical divisor class of $\fd$ is $-2D - (\delta+2) F$, since the form $d \log x \wedge d \log y$ has divisor $-D-D'-F-F'$ (cf. \cite[p. 85]{fulton}), and $\chi(\fd, \cO_{\fd}) = 1$ since $\fd$ is a rational surface, hence the Riemann-Roch formula for line bundles on surfaces \cite[I.12]{beau} gives the following formula, after some simplification.

\begin{equation} \label{eq_chi}
\chi \left(\fd, \cO_{\fd} ( \alpha D + \beta F) \right) = (\alpha+1)(\beta+1) - \frac12 \delta (\alpha+1)\alpha
\end{equation}

\begin{cor} \label{c_h1}
If $\alpha,\beta$ are integers with $\alpha \geq -1$, and $C \sim \alpha D + \beta F$, then
$$
h^1(\fd, \cO_{\fd} (C) )= \sum_{j=0}^\alpha \max(0, j \delta - \beta - 1).
$$
This is $0$ if and only if $C.D \geq -1$, and it is equal to $-C.D-1$ if $-\delta -1 \leq C.D \leq -1$.
\end{cor}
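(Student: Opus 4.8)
The plan is to compute $h^1$ from the two ingredients already established: the formula for $h^0$ in Corollary \ref{c_h0} and the Euler characteristic in \eqref{eq_chi}, via the relation $h^1 = h^0 + h^2 - \chi$. The first thing I would do is dispose of $h^2$. By Serre duality, $h^2(\fd, \cO_{\fd}(C)) = h^0(\fd, \cO_{\fd}(K - C))$, where $K \sim -2D - (\delta+2)F$ is the canonical class recorded above; thus $K - C \sim (-2-\alpha)D + (-\delta-2-\beta)F$, and since $\alpha \geq -1$ the coefficient of $D$ is $-2-\alpha \leq -1 < 0$. By Corollary \ref{c_bpf} a complete series $|\gamma D + \gamma' F|$ is empty unless both coefficients are nonnegative, so $h^0(\fd, \cO_{\fd}(K-C)) = 0$ and hence $h^2(\fd, \cO_{\fd}(C)) = 0$. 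This reduces the claim to the identity $h^0 - \chi = \sum_{j=0}^\alpha \max(0, j\delta - \beta - 1)$.

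For this identity I would rewrite \eqref{eq_chi} as a sum matching the index range of the $h^0$ formula. Since $\sum_{j=0}^\alpha (\beta + 1 - j\delta) = (\alpha+1)(\beta+1) - \delta \tfrac{\alpha(\alpha+1)}{2}$, equation \eqref{eq_chi} says precisely that $\chi(\fd, \cO_{\fd}(C)) = \sum_{j=0}^\alpha (\beta + 1 - j\delta)$. Subtracting this termwise from the expression for $h^0$ in Corollary \ref{c_h0} and applying the elementary identity $\max(0,x) - x = \max(0,-x)$ with $x = \beta + 1 - j\delta$ gives
$$
h^0 - \chi = \sum_{j=0}^\alpha \bigl( \max(0, \beta+1-j\delta) - (\beta+1-j\delta) \bigr) = \sum_{j=0}^\alpha \max(0, j\delta - \beta - 1),
$$
which is the stated formula. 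When $\alpha = -1$ both sums are empty and this reads $h^1 = 0$, consistent with $h^0 = \chi = 0$.

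For the last two assertions I would express everything through the intersection number. From $D^2 = -\delta$ and $D.F = 1$ we get $C.D = (\alpha D + \beta F).D = \beta - \alpha \delta$, so $j\delta - \beta - 1 = (j-\alpha)\delta - (C.D) - 1$. Because $\delta \geq 0$, the summands are nondecreasing in $j$, so the whole sum vanishes exactly when its top term, at $j = \alpha$, is nonpositive; that term equals $\alpha\delta - \beta - 1 = -(C.D) - 1$, which is $\leq 0$ precisely when $C.D \geq -1$. In the range $-\delta - 1 \leq C.D \leq -1$ the top term equals $-(C.D) - 1 \geq 0$, while the next term, at $j = \alpha - 1$, equals $-(C.D) - 1 - \delta \leq 0$; hence only the top term survives and $h^1 = -C.D - 1$.

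The computation is entirely routine once $h^2 = 0$ is in place, so the only point requiring genuine care is the vanishing of $h^2$: I would make sure to invoke the hypothesis $\alpha \geq -1$ exactly where it is needed, namely to force the $D$-coefficient of $K - C$ strictly negative, and to record the degenerate empty-sum case $\alpha = -1$ separately as above.
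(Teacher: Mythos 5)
Your proof is correct and follows essentially the same route as the paper's: kill $h^2$ by Serre duality using the canonical class $-2D-(\delta+2)F$ together with the nonexistence of sections, rewrite equation (\ref{eq_chi}) as the termwise sum $\sum_{j=0}^{\alpha}(\beta+1-j\delta)$, subtract it from the $h^0$ formula of Corollary \ref{c_h0}, and settle the final assertions via $C.D = \beta-\alpha\delta$. The only difference is that you spell out details (the $\max(0,x)-x=\max(0,-x)$ identity, the monotonicity of the summands, the empty-sum case $\alpha=-1$) that the paper's terser proof leaves implicit.
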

\begin{proof}
The right side of equation (\ref{eq_chi}) can be rewritten $\sum_{j=0}^{\alpha} (\beta + 1 - j \delta)$. Corollary \ref{c_h0}, with Serre duality and the fact that the canonical divisor of $\fd$ is $-2D - (\delta+2) F$, shows that $\cO_{\fd}(\alpha D + \beta F)$ has vanishing second cohomology. Now use Corollary \ref{c_h0}. The last sentence follows from the fact that $(\alpha D + \beta F) . D = \beta - \alpha \delta$.
\end{proof}

\subsection{Fully inflected curves on $\FF_\delta$} \label{ss_fdCurves}
Throughout this subsection, we fix integers $r,m,\eps$, and $\delta$, such that $r \geq 3$, $r-1 \geq \delta$, $r-1 \equiv \delta \pmod{2}$, $m \geq 2$, and $0 \leq \eps \leq r-1$. We also assume that if $m=2$, then $\eps \geq 2$. 

We denote by $d,g$ the integers $m(r-1) + \eps + 1$ and $\binom{m}{2} (r-1) + m \eps$, respectively, and $H_r,\Gamma$ denote the following two divisors.
\begin{eqnarray*}
H_r &=& D + \frac12(r-1+\delta) F\\
\Gamma &=& (m+1) H_r - (r-2-\eps) F
\end{eqnarray*}

The divisor $H_r$ is the pullback of a hyperplane under $\phi_{|H_r|}$ (Corollary \ref{c_hirzRNS}); observe that $H_r . H_r = r-1$.

We will analyze the following variety. It is the analog of $\Phi^\delta_{r,m,\eps}$ for a fixed Hirzebruch surface rather than a variable scroll.

\begin{defn} \label{d_Psirmed}
Denote by $\psirmed$ the sub-variety of $|\Gamma| \times |H_r| \times \fd$ consisting of triples $(C,\zeta,p)$, where $C \sim \Gamma$ is a smooth curve, $\zeta$ is an element of $|H_r|$, and $p$ is a point of $C$, such that the pullback of the divisor $\zeta$ to $C$ is equal to the divisor $d \cdot p$, and such that the linear series $|H_r|$ includes elements meeting $C$ to order $0,1,\cdots,r-1$ at $p$.
\end{defn}

\begin{lemma} \label{l_zetaSmooth}
For all points $(C,\zeta,p) \in \psirmed$, the divisor $\zeta$ is a smooth.
\end{lemma}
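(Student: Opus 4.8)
The plan is to argue by contradiction. Assuming $\zeta$ is non-smooth, I would first show that $\zeta$ must contain, as a component, the ruling line $F_p$ through $p$, and then show that this is incompatible with the prescribed contact orders at $p$.

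\emph{Step 1: non-smooth forces a ruling-line component.} Since $\zeta . F = H_r . F = 1$, in any decomposition of $\zeta$ into irreducible components exactly one component $A$ is a section (with $A.F = 1$, of class $\sim D + kF$) and all remaining components are ruling lines. Using the intersection numbers and the canonical class $K = -2D - (\delta+2)F$ recorded above, an irreducible reduced member $\zeta \sim H_r$ has arithmetic genus $p_a = 1 + \tfrac12(H_r^2 + H_r . K) = 1 + \tfrac12\bigl((r-1) + (-r-1)\bigr) = 0$, hence is smooth. Therefore, if $\zeta$ is non-smooth it cannot be irreducible and reduced, and by the decomposition above it must contain at least one ruling line $F'$ as a component.

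\emph{Step 2: the bad component is the fiber through $p$.} By hypothesis $\zeta$ pulls back to $d \cdot p$ on $C$, so $\zeta \cap C = \{p\}$ set-theoretically. The ruling-line component satisfies $F' . C = \Gamma . F = m+1 > 0$, so $F'$ meets $C$, necessarily only at $p$; since there is a unique ruling line through any point, $F' = F_p$ and the intersection is concentrated there, giving $F_p|_C = (m+1)\,p$ with $m+1 \geq 3$.

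\emph{Step 3: the dimension/contact-order contradiction.} The elements of $|H_r|$ meeting $C$ to orders $0,1,\dots,r-1$, together with $\zeta$ meeting $C$ to order $d$, realize $r+1$ distinct contact orders, and so constitute the full vanishing sequence of $|H_r|$ at $p$. The members of $|H_r|$ containing $F_p$ are exactly those of the form $F_p + G$ with $G \in |H_r - F|$; this subsystem has codimension $2$, since $h^0(\fd, \cO_{\fd}(H_r - F)) = r-1$ by Corollary \ref{c_h0} (a computation I would check uniformly in $\delta$, including the singular case $\delta = r-1$). Each such member restricts to $C$ as $(m+1)p + G|_C$, hence vanishes to order $\geq m+1 \geq 3$ at $p$. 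On the other hand, because the vanishing orders of $|H_r|$ at $p$ include $0,1,2$, the sections vanishing to order $\geq 3$ cut out a subsystem of codimension $3$. A codimension-$2$ subsystem cannot be contained in a codimension-$3$ one, which is the desired contradiction; hence $\zeta$ is smooth.

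The main obstacle is the geometric reduction of Steps 1–2 — pinning down that a singular hyperplane section must contain precisely the fiber $F_p$ through $p$, rather than merely some fiber or a singular section. Once that is established, the count in Step 3 is routine. The one computational point warranting care is verifying $h^0(H_r - F) = r-1$ in every case, which follows directly from Corollary \ref{c_h0}.
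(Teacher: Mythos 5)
Your proof is correct, and its first two steps coincide with the paper's: decompose $\zeta$ using $\zeta . F = 1$ into a section plus ruling lines, show that any ruling-line component must meet $C$, hence must be the fiber $F_p$ through $p$, and conclude that $C$ meets $F_p$ to order $C.F = m+1$ concentrated at $p$. (Your smoothness criterion for an irreducible reduced member of $|H_r|$ --- adjunction giving $p_a = 0$ --- is a harmless variant of the paper's direct argument that $\zeta . F = 1$ forces transversality with every ruling line.) Your Step 3, however, is genuinely different. The paper moves one copy of $F_p$ to another ruling line, producing $\zeta' \sim \zeta$ with contact order exactly $d-m-1$ at $p$; since the vanishing orders of $|H_r|$ at $p$ are precisely $0,1,\dots,r-1,d$, this forces $d-m-1 \leq r-1$, i.e. $(m-1)(r-2)+\eps \leq 1$, which the standing hypotheses rule out. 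You instead run a codimension count: the divisors containing $F_p$ form a codimension-$2$ linear subsystem (from $h^0(\fd, \cO_{\fd}(H_r - F)) = r-1$), all of whose members meet $C$ to order at least $m+1 \geq 3$ at $p$, whereas the flag structure of vanishing orders --- using only that $0,1,2$ occur --- shows that the order-$\geq 3$ locus has codimension exactly $3$, and a codimension-$2$ linear subspace cannot sit inside a codimension-$3$ one. Your route trades the paper's closing arithmetic in $(r,m,\eps)$ (which must invoke the assumption $m+\eps \geq 3$, i.e. rule out $(r,m,\eps)=(3,2,0)$) for one extra cohomology computation from Corollary \ref{c_h0}, and it is slightly more robust, needing only $m \geq 2$ and $r \geq 3$; the paper's route is more constructive, exhibiting an explicit forbidden vanishing order rather than a dimension mismatch. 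Both arguments are sound.
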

\begin{proof}
We claim that $\zeta$ is irreducible. This will show that it is smooth, since $\zeta . F = 1$, hence $\zeta$ meets all ruling lines transversely and must have a $1$-dimensional tangent space at all points.

Suppose that $\zeta$ is reducible. Then one irreducible component has class $D + \ell F$ for some $\ell \in \ZZ$, while all other irreducible components are supported on ruling lines, by Corollary \ref{c_bpf}. So $\zeta$ includes \emph{the} ruling line through $p$ with some positive multiplicity. Since $C$ can meet this ruling line at only one point, it meets it to order $C.F = m+1$ at $p$. By moving one copy of the ruling line to a different location, we obtain another divisor $\zeta' \sim \zeta$ meeting $C$ to order $d - m - 1$ at $p$. This vanishing order must lie in $\{0,1,\cdots, r-1,d\}$, hence $d-m-1 \leq r-1$. Since $d -1 = m(r-1) + \eps$, this implies that $(m-1)(r-2) + \eps \leq 1$. But this is impossible, since $r \geq 3$ and $m+ \eps \geq 3$.
\end{proof}

\begin{lemma} \label{l_contacts}
If $C$ is a smooth irreducible curve in $|\Gamma|$, $\zeta$ is a smooth element of $|H_r|$, and the pullback of $\zeta$ to $C$ is $d \cdot p$, then there exist other elements of $|H_r|$ meeting $C$ to orders $0,1,\cdots,r-1$ at $p$. In other words, $(C,\zeta,p) \in \psirmed$.
\end{lemma}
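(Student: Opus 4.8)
The plan is to identify the contact orders of the linear series $|H_r|$ at $p$ with its vanishing (ramification) sequence and then to pin this sequence down completely. Since $|H_r|$ is base point free of projective dimension $r$ and embeds $C$ (Corollary~\ref{c_hirzRNS}, together with the computation $h^0(H_r)=r+1$), its restriction to $C$ is a $g^r_d$ of degree $d = H_r.\Gamma$. Its vanishing sequence $0 \le a_0 < a_1 < \cdots < a_r \le d$ records exactly the contact orders realized by elements of $|H_r|$ at $p$. The general inequalities $a_i \ge i$ hold automatically, and the hypothesis that $\zeta$ meets $C$ to order $d$ forces $a_r = d$. Thus it suffices to prove $a_{r-1} \le r-1$: combined with $a_i \ge i$ and $a_0 < \cdots < a_{r-1}\le r-1$, this forces $a_i = i$ for $0 \le i \le r-1$, and then the elements of $|H_r|$ realizing the orders $0,1,\ldots,r-1$ are the desired ones, so that $(C,\zeta,p) \in \psirmed$.

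The inequality $a_{r-1} \le r-1$ says that, up to scalars, $\zeta$ is the only element of $|H_r|$ meeting $C$ to order at least $r$ at $p$. I would argue this by contradiction: suppose $\eta \in |H_r|$ is distinct from $\zeta$ with $(\eta.C)_p \ge r$. The key point is that because $\zeta$ is smooth (Lemma~\ref{l_zetaSmooth}) and osculates the smooth curve $C$ to the very high order $d$ at $p$, any curve meeting $C$ to high order at $p$ is forced to meet $\zeta$ to high order there as well; concretely, one shows $(\eta.\zeta)_p \ge \min\big((\eta.C)_p,\, d\big) \ge r$. Since $\zeta$ is irreducible and $\eta \ne \zeta$, the two share no common component, so their total intersection number is $\zeta.\eta = H_r.H_r = r-1$. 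This contradicts $(\eta.\zeta)_p \ge r$, establishing $a_{r-1} \le r-1$.

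The technical heart, and the step I expect to require the most care, is the local comparison $(\eta.\zeta)_p \ge \min\big((\eta.C)_p,\, d\big)$, since $\eta$ need not be smooth or irreducible at $p$. I would handle this one branch of $\eta$ at a time: writing $C$ locally as a smooth graph and $\zeta$ as a graph differing from it to order exactly $d$, a parametrization $\gamma_B$ of a branch $B$ of $\eta$ gives $\mathrm{ord}_t\, f_\zeta(\gamma_B) \ge \min\big(\mathrm{ord}_t\, f_C(\gamma_B),\, d\big)$, where $f_\zeta, f_C$ are local defining equations; summing over branches yields the claim, using $(B.C)_p \le (\eta.C)_p \le \eta.\Gamma = d$ so that each minimum is controlled. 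With this in hand the proof concludes as above. I note that one can alternatively try to exhibit the low orders by hand—for instance the ruling through $p$ has contact exactly $1$ with $C$, since it meets the osculating curve $\zeta$ transversally at $p$—but realizing every intermediate order up to $r-1$ this way becomes awkward precisely when $\delta < r-1$, whereas the vanishing-sequence argument treats all invariants $\delta$ uniformly.
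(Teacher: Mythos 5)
Your proof is correct, but it takes a genuinely different route from the paper's. The paper restricts $|H_r|$ to $\zeta$ rather than to $C$: since $\zeta \sim H_r$ and $h^1(\fd,\cO_{\fd})=0$ (Corollary \ref{c_h1}), the restriction map $H^0(\fd,\cO_{\fd}(H_r)) \to H^0(\zeta,\cO_\zeta(r-1))$ is surjective, so $|H_r|$ cuts out the \emph{complete} linear series of degree $r-1$ on $\zeta \cong \PP^1$, which visibly contains elements with contact orders $0,1,\dots,r-1$ at $p$; these elements then have the same contact orders with $C$ because $C$ and $\zeta$ agree to order $d > r-1$ at $p$. You instead work with the vanishing sequence of the $g^r_d$ cut on $C$, get $a_r = d$ from $\zeta$, and force $a_{r-1}\le r-1$ by showing $\zeta$ is the unique member of $|H_r|$ with contact $\ge r$ at $p$, since otherwise $(\eta.\zeta)_p \ge \min\bigl((\eta.C)_p,\,d\bigr) \ge r$ would exceed the total intersection $\eta.\zeta = H_r.H_r = r-1$. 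Both arguments hinge on the same local fact --- contact with $C$ and contact with $\zeta$ agree below order $d$ --- but the paper uses it to transfer contacts from $\zeta$ down to $C$, while you use it in the opposite direction to produce a numerical contradiction. The paper's route buys brevity: the cohomological input is already packaged in Corollary \ref{c_h1}, and the same restriction-to-$\zeta$ mechanism is reused in Lemma \ref{l_h1bound} and Proposition \ref{p_dimpsi1}. Your route buys independence from cohomology: it needs only $h^0(\fd,\cO_{\fd}(H_r))=r+1$, injectivity of restriction to $C$ (i.e.\ that $H_r-\Gamma$ is not effective), and the self-intersection number, so it would survive in a setting where surjectivity onto $H^0(\zeta,\cO_\zeta(r-1))$ failed. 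Two small tightenings: your local comparison does not need a branch decomposition --- in coordinates where $C=\{y=0\}$ and $\zeta=\{y=g(x)\}$ with $\mathrm{ord}_x\, g = d$, writing $f_\eta(x,y) = f_\eta(x,0) + y\,h(x,y)$ and substituting $y=g(x)$ gives $(\eta.\zeta)_p \ge \min\bigl((\eta.C)_p,\,d\bigr)$ in one line; and the irreducibility of $\zeta$, which you cite from Lemma \ref{l_zetaSmooth}, deserves a word here since that lemma concerns points of $\psirmed$, but it is immediate: a smooth member of $|H_r|$ can have no ruling-line component, as such a component would meet the component of class $D+\ell F$ and create a singular point.
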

\begin{proof}
By Corollary \ref{c_h1}, $h^1(\fd,\cO_{\fd}) = 0$, hence the map $H^0(\fd, \cO_{\fd}(H_r)) \rightarrow H^0(\zeta, \cO_\zeta(r-1) )$ is surjective. In other words, the linear series $|H_r|$ on $\fd$ pulls back to the complete linear series of degree $r-1$ on $\zeta \cong \PP^1$. So there exist elements of $|H_r|$ meeting $\zeta$ to orders $0,1,2,\cdots,r-1$ at $p$. Since $\zeta$ meets the smooth curve $C$ to order $r$ at $p$, these same elements of $|H_r|$ also meet $C$ to orders $0,1,\cdots,r-1$ at $p$.
\end{proof}

\begin{cor} \label{c_psiDesc}
A triple $(C,\zeta,p) \in |\Gamma| \times |H_r| \times \fd$ lies in $\psirmed$ if and only if $\zeta$ and $C$ are smooth, $C$ is irreducible, and the pullback of the divisor $C$ to $\zeta$ is equal to the divisor $d \cdot p$ on $\zeta$.
\end{cor}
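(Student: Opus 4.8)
The plan is to derive this set-theoretic description directly from Lemmas \ref{l_zetaSmooth} and \ref{l_contacts}, together with the symmetry of the intersection pairing on $\fd$. The one genuinely new ingredient is reconciling the two phrasings of the contact condition: the definition of $\psirmed$ asks that the pullback of $\zeta$ to $C$ equal $d \cdot p$, whereas the Corollary is stated in terms of the pullback of $C$ to $\zeta$. So the heart of the argument is to show these two conditions are interchangeable once both $C$ and $\zeta$ are smooth.

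First I would record the numerical identity $\Gamma \cdot H_r = d$, obtained by expanding $\Gamma = (m+1)H_r - (r-2-\eps)F$ and using $H_r \cdot H_r = r-1$, $H_r \cdot F = 1$, and $d = m(r-1) + \eps + 1$. This guarantees that the scheme-theoretic intersection $C \cap \zeta$ has total length $d$, so that ``supported only at $p$'' is equivalent to ``equal to $d \cdot p$.'' I would then observe that, at any point $q$ where both $C$ and $\zeta$ are smooth, the multiplicity at $q$ of the pullback of $\zeta$ to $C$, the multiplicity at $q$ of the pullback of $C$ to $\zeta$, and the local intersection number $i_q(C,\zeta)$ all coincide, since each is the length of $\cO_{\fd,q}/(f,g)$ for local equations $f,g$ of $C$ and $\zeta$. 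Consequently each of the two pullback conditions is equivalent to the single statement that $C$ and $\zeta$ meet set-theoretically only at $p$.

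For the forward implication, given $(C,\zeta,p) \in \psirmed$, the curve $C$ is smooth by definition (hence connected, hence irreducible, under our conventions) and $\zeta$ is smooth by Lemma \ref{l_zetaSmooth}; the defining condition that the pullback of $\zeta$ to $C$ equals $d\cdot p$ forces $C \cap \zeta = \{p\}$, and the symmetry above then gives that the pullback of $C$ to $\zeta$ equals $d\cdot p$. For the reverse implication, starting from a triple with $\zeta$ and $C$ smooth, $C$ irreducible, and the pullback of $C$ to $\zeta$ equal to $d \cdot p$, the same symmetry yields that the pullback of $\zeta$ to $C$ is $d\cdot p$; Lemma \ref{l_contacts} then supplies elements of $|H_r|$ meeting $C$ to orders $0,1,\dots,r-1$ at $p$, placing the triple in $\psirmed$.

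The only point requiring care is the symmetry step, which presupposes that the intersection $C \cap \zeta$ is proper (zero-dimensional) so that the local-length computation applies. This holds because $C$ is irreducible of class $\Gamma$ while $\zeta \sim H_r$ has strictly smaller degree, so $\zeta$ cannot be a component of $C$; hence $C \cap \zeta$ is a finite scheme and the identification of the two Cartier-divisor multiplicities with the symmetric intersection number $i_q(C,\zeta)$ is legitimate at every point. Beyond this, the Corollary is a formal consequence of the two preceding lemmas, so I do not anticipate any substantive obstacle.
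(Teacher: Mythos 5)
Your proof is correct and takes essentially the same route the paper intends: the Corollary appears there with no separate proof, being exactly the combination of Lemma \ref{l_zetaSmooth} (forward direction, smoothness of $\zeta$) and Lemma \ref{l_contacts} (reverse direction, automatic contact orders) that you give. Your explicit reconciliation of the two pullback conditions via the symmetric local length $\dim_{\CC} \cO_{\fd,q}/(f,g)$, together with the no-common-component check, simply fills in the detail the paper leaves implicit.
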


\begin{rem} \label {r_carvalho}
Whereas our analysis has led us to curves $C$ on scrolls meeting a hyperplane section $\zeta$ at a single point $p$, Carvalho \cite{carvalho} has studied an analogous situation: Weierstrass semigroups of points $p$ on curves $C$ on scrolls where a \emph{ruling line} meets $C$ at the point $p$ alone.
\end{rem}

\begin{lemma} \label{l_h1bound}
Let $\zeta$ be a smooth element of $|H_r|$, and $p \in \zeta$ a point. The linear map
$$
H^0(\fd, \cO_{\fd}(\Gamma)) \rightarrow H^0(\zeta, \cO_\zeta(d \cdot p))
$$
induced by the short exact sequence 
$$
0 \rightarrow \cO_{\fd}(\Gamma-\zeta) \rightarrow \cO_{\fd}(\Gamma) \rightarrow \cO_{\zeta} (d \cdot p) \rightarrow 0
$$
has cokernel of dimension less than or equal to $\max(\delta-2,0)$.
\end{lemma}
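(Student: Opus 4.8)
The plan is to convert the statement into a bound on the first cohomology of a line bundle on $\fd$ and then evaluate that cohomology with Corollary \ref{c_h1}. Taking the long exact sequence in cohomology attached to the given short exact sequence, the cokernel of the restriction map $H^0(\fd, \cO_\fd(\Gamma)) \to H^0(\zeta, \cO_\zeta(d \cdot p))$ is isomorphic to the image of the connecting homomorphism into $H^1(\fd, \cO_\fd(\Gamma - \zeta))$, and in particular is a subspace of $H^1(\fd, \cO_\fd(\Gamma - \zeta))$. It therefore suffices to prove $h^1(\fd, \cO_\fd(\Gamma - \zeta)) \leq \max(\delta - 2, 0)$. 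Since $\zeta \in |H_r|$, we have $\Gamma - \zeta \sim \Gamma - H_r = m H_r - (r-2-\eps)F$, so the problem becomes intrinsic to $\fd$, independent of the choice of $\zeta$ and $p$.

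Next I would apply Corollary \ref{c_h1} to the class $C = \Gamma - H_r$. Written in the basis $D,F$, its coefficient of $D$ is $m \geq 2 \geq -1$, so the hypothesis of the corollary is met, and the corollary expresses $h^1$ entirely through the single intersection number $(\Gamma - H_r).D$: the cohomology vanishes once this number is at least $-1$, and equals $-(\Gamma - H_r).D - 1$ when it lies in the interval $[-\delta - 1, -1]$. Thus the lemma reduces to computing $(\Gamma - H_r).D$ and bounding it from below.

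Using $H_r . D = \tfrac12(r-1-\delta)$, $F.D = 1$, and the definition of $\Gamma$, a direct computation gives
$$
(\Gamma - H_r).D = \frac{m}{2}(r-1-\delta) - (r-2-\eps).
$$
Setting $t = r-1-\delta$, which is $\geq 0$ by the standing hypothesis $r-1 \geq \delta$, this rearranges to $\tfrac{m-2}{2}\,t + 1 + \eps - \delta$. Since $\tfrac{m-2}{2}\,t \geq 0$ (as $m \geq 2$) and $\eps \geq 0$, we get $(\Gamma - H_r).D \geq 1 - \delta > -\delta - 1$. This places the intersection number inside the range where Corollary \ref{c_h1} gives the clean linear formula, yielding $h^1 = \max\bigl(0,\, -(\Gamma - H_r).D - 1\bigr) \leq \max(0, \delta - 2)$, as desired.

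The reduction to a single line bundle is routine; the only real care is the intersection-number computation and verifying that its value never drops below $-\delta - 1$, so that Corollary \ref{c_h1} produces the sharp bound $-(\Gamma - H_r).D - 1$ rather than a larger sum over several indices $j$. This is precisely where the hypotheses $m \geq 2$, $r-1 \geq \delta$, and $\eps \geq 0$ are used: without $m \geq 2$ the term $\tfrac{m-2}{2}\,t$ could be negative and the lower bound on $(\Gamma - H_r).D$ could fail.
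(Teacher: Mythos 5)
Your proposal is correct and follows essentially the same route as the paper's proof: pass to the long exact sequence to reduce the cokernel bound to $h^1(\fd, \cO_\fd(\Gamma - H_r)) \leq \max(\delta-2,0)$, then apply Corollary \ref{c_h1} after computing $(\Gamma - H_r).D = \tfrac12(m-2)(r-1-\delta) + 1 + \eps - \delta \geq 1-\delta$. Your intersection-number computation and the verification that this value stays above $-\delta-1$ match the paper's argument exactly.
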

\begin{proof}
The dimension of this cokernel is equal to the dimension of the image of the boundary map $H^0(\zeta, \cO_\zeta(d \cdot p ) ) \rightarrow H^1(\fd, \cO_{\fd}(\Gamma - \zeta))$. Hence since $\zeta \sim H_r$ it suffices to show that $h^1(\fd, \cO_{\fd}(\Gamma-H_r)) \leq \max(\delta-2,0)$. By Corollary \ref{c_h1}, it suffices to show that $(\Gamma-H_r) . D \geq 1-\delta$. Computing this intersection number and rearranging yields
\begin{eqnarray*}
(\Gamma - H_r) . D %&=& (m H_r - (r-2-\eps) F) . D\\
%&=& m \cdot \frac{r-1-\delta}{2} - (r-2-\eps)\\
%&=& \frac12 (m-2) (r-1) - \frac12 (m-2) \delta  + \eps + 1 - \delta\\
&=& \frac12 (m-2) (r-1-\delta) + 1 + \eps - \delta.
\end{eqnarray*}
Since $m \geq 2$, $\delta \leq r-1$, and $\eps \geq 0$, it follows that $(\Gamma-H_r).D \geq 1-\delta$.
\end{proof}

\begin{prop} \label{p_dimpsi1}
If $\delta \in \{0,1\}$, then $\psirmed$ is irreducible of dimension $g+2m+\eps+4$.
\end{prop}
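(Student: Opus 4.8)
The plan is to mimic the strategy used for the Veronese surface in Proposition \ref{p_verPsiDim}: realize $\psirmed$ as fibered over the space of pairs $(\zeta,p)$, identify each fiber with a dense open subset of a fixed sub-linear-system of $|\Gamma|$, and then multiply dimensions. First I would introduce the forgetful map
$$
\psirmed \longrightarrow B := \{(\zeta,p):\ \zeta \in |H_r| \text{ smooth},\ p \in \zeta\}.
$$
The base $B$ is irreducible: since $H_r = D + \tfrac12(r-1+\delta)F$ satisfies $\tfrac12(r-1+\delta) \geq \delta$ (equivalently $r-1 \geq \delta$), Corollary \ref{c_bpf} gives that $|H_r|$ is base-point-free, so its general member is a smooth rational curve by Bertini, and $B$ is a $\PP^1$-bundle over the smooth locus of $|H_r|$. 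As $h^0(\fd,\cO_{\fd}(H_r)) = r+1$, we get $\dim B = r+1$.

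Next I would analyze the fiber over a general $(\zeta,p) \in B$. Because $\zeta \cong \PP^1$ and $\Gamma . H_r = d$, the line bundle $\cO_{\fd}(\Gamma)|_\zeta$ has degree $d$. The hypothesis $\delta \in \{0,1\}$ enters precisely here: Lemma \ref{l_h1bound} shows the restriction map $H^0(\fd,\cO_{\fd}(\Gamma)) \to H^0(\zeta, \cO_\zeta(d\cdot p))$ is surjective, its cokernel having dimension at most $\max(\delta-2,0) = 0$. The sections of $\cO_{\PP^1}(d)$ vanishing to order $d$ at $p$ span a one-dimensional subspace, so surjectivity forces the locus
$$
\mathfrak{d} = \{C \in |\Gamma|:\ C|_\zeta = d \cdot p \text{ or } \zeta \subseteq C\}
$$
to be a sub-linear-system of $|\Gamma|$ of codimension \emph{exactly} $d$.

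Then I would verify that a general member of $\mathfrak{d}$ is a smooth irreducible curve meeting $\zeta$ only at $p$, so by Corollary \ref{c_psiDesc} it yields a point of $\psirmed$. The class $\Gamma - H_r = m H_r - (r-2-\eps)F$ satisfies the base-point-free criterion of Corollary \ref{c_bpf} for $\delta \in \{0,1\}$ and $m \geq 2$, and since $\mathfrak{d}$ contains every divisor $\zeta + C'$ with $C' \in |\Gamma - H_r|$, its base locus lies in $\zeta$; because $\mathfrak{d}$ also contains members meeting $\zeta$ only at $p$, that base locus is exactly $\{p\}$. Bertini makes a general member smooth away from $p$, and choosing $C'$ with $p \notin C'$ exhibits a member $\zeta + C'$ smooth at $p$, so the general member is smooth at $p$ as well; smoothness combined with the vanishing $h^1(\fd,\cO_{\fd}(-\Gamma)) = 0$ (via Serre duality and Corollary \ref{c_h1}) forces connectedness, hence irreducibility. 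Consequently the fiber over a general $(\zeta,p)$ is a dense open subset of $\mathfrak{d}$, irreducible of dimension $\dim|\Gamma| - d = h^0(\fd,\cO_{\fd}(\Gamma)) - 1 - d$. As the base is irreducible and the generic fiber is irreducible of constant dimension, $\psirmed$ is irreducible of dimension $(r+1) + h^0(\fd,\cO_{\fd}(\Gamma)) - 1 - d$, and a direct evaluation of $h^0(\fd,\cO_{\fd}(\Gamma))$ via Corollary \ref{c_h0}, handling $\delta=0$ and $\delta=1$ separately and checking that no summand is truncated, shows this equals $g+2m+\eps+4$.

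I expect the main obstacle to be the verification that the generic member of $\mathfrak{d}$ is smooth and irreducible at and near the base point $p$: Bertini controls only the behavior away from the base locus, so one must exhibit an explicit smooth member through $p$ and separately rule out that the generic curve acquires $\zeta$ as a component or becomes disconnected. The role of the assumption $\delta \in \{0,1\}$ is confined to Lemma \ref{l_h1bound}, which guarantees the restriction map is surjective; this is exactly what pins the codimension of $\mathfrak{d}$ at $d$ and yields the exact dimension. For larger $\delta$ the cokernel is positive, the codimension can drop, and only the upper bound of Lemma \ref{l_dimbound} survives.
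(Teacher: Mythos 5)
Your proposal is correct and follows essentially the same approach as the paper's proof: fiber $\psirmed$ over the space of pairs $(\zeta,p)$, use Lemma \ref{l_h1bound} (where $\delta\in\{0,1\}$ makes the cokernel vanish) to pin the codimension of $\mathfrak{d}$ in $|\Gamma|$ at exactly $d$, locate the base locus at $p$ via the members $\zeta + C'$ with $C' \in |\Gamma - \zeta|$, and apply Bertini plus an explicit member smooth at $p$ before multiplying fiber and base dimensions. The only cosmetic differences are your irreducibility argument via $h^1(\fd,\cO_{\fd}(-\Gamma))=0$ and connectedness (the paper instead observes that distinct components of a member meeting $\zeta$ only at $p$ would all pass through $p$, contradicting smoothness), your computing $h^0(\fd,\cO_{\fd}(\Gamma))$ directly from Corollary \ref{c_h0} rather than via $\chi$ and non-speciality, and your phrasing in terms of the \emph{general} fiber where the argument in fact describes \emph{every} fiber over $B$ (which is what the fibration argument for irreducibility of the total space actually requires).
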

\begin{proof}
Fix a smooth $\zeta \in |H_r|$ and $p \in \zeta$. Lemma \ref{l_h1bound} shows that $|\Gamma|$ cuts out a complete linear series on $\zeta$, and in particular there is a codimension $d$ sub-series of $|\Gamma|$ consisting of divisors $C$ that either contain $\zeta$ entirely or meet it to order $d$ at $p$. Denote this linear series by $\mathfrak{d}$.

We claim that the only base point of $\mathfrak{d}$ is $p$ itself. First, observe that $\mathfrak{d}$ includes the linear series $\zeta + |\Gamma - \zeta|$ of those elements containing $\zeta$ entirely.  As in the proof of Lemma \ref{l_h1bound}, we have $(\Gamma - \zeta). D \geq 1-\delta \geq 0$. Hence by Corollary \ref{c_bpf}, $|\Gamma - \zeta|$ is base point free. This shows that any base point of $\mathfrak{d}$ must lie on $\zeta$. Next, since $H^0(\cO_{\fd}(\Gamma)) \rightarrow H^0(\cO_\zeta(d \cdot p))$ is surjective, $\mathfrak{d}$ necessarily contains some members that intersect $\zeta$ only at the point $p$. Therefore $p$ is the only base point of $\mathfrak{d}$.

Bertini's theorem \cite[Corollary 10.9]{hartshorne} implies that a general member of $\mathfrak{d}$ is nonsingular except possibly at $p$. On the other hand, there exist elements of $\mathfrak{d}$ consisting of $\zeta$ itself plus a divisor that does not meet $p$, and these members are smooth at $p$. So for a general member of $\mathfrak{d}$, $p$ is a smooth point. Therefore a general element of $\mathfrak{d}$ is a smooth curve meeting $\zeta$ to order $d$ at $p$. All such curves are necessarily irreducible, since otherwise all components would meet at $p$ and the curve would not be smooth.

Equation (\ref{eq_chi}) and a rearrangement of terms shows that
$$
\chi(\fd, \cO_{\fd}(\Gamma) )= g + d -r + 2m + \eps + 4.
$$
As remarked above, $(\Gamma - \zeta) . D \geq 0$, so certainly $\Gamma . D \geq 0$, hence $\Gamma$ is non-special and
$\dim |\Gamma| = g+d -r + 2m + \eps + 3.$
Therefore $\dim \mathfrak{d} = g-r+2m+\eps+3$.

By Corollary \ref{c_psiDesc}, a dense open subset of $\mathfrak{d}$ is a fiber of the forgetful map
$$
\psirmed \rightarrow \{ (\zeta,p):\ \zeta \in |H_r| \mbox{ smooth, } p \in \zeta \}.
$$
Therefore this map is surjective, and all fibers are irreducible of dimension $g-r+2m+\eps+3$. The target of this map is irreducible of dimension $r+1$. Hence $\psirmed$ is irreducible of dimension $g+2m+\eps+4$.
\end{proof}

\begin{prop} \label{p_dimpsi2}
If $\delta \geq 2$, then either $\psirmed$ is empty, or $$\dim \psirmed \leq g+2m+\eps+4 + (\delta-2).$$
\end{prop}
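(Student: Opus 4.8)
The plan is to mimic the fibration in the proof of Proposition~\ref{p_dimpsi1}, sending a triple $(C,\zeta,p)$ to the pair $(\zeta,p)$, but now keeping careful track of the fact that for $\delta \geq 2$ the restriction map $H^0(\fd,\cO(\Gamma)) \to H^0(\zeta,\cO_\zeta(d\cdot p))$ need no longer be surjective. By Lemma~\ref{l_h1bound} its cokernel has some dimension $c \leq \delta-2$. A nonzero cokernel pulls in two directions at once: it enlarges the linear series of admissible curves through a given $(\zeta,p)$, but it also forces the divisor $d\cdot p$ to lie in a proper subspace of $H^0(\zeta,\cO_\zeta(d\cdot p))$, which is a nontrivial condition on $p$. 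The crux is that these two effects never combine to reach the asserted bound.

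First I would dispose of a degenerate case. Writing $\Gamma = (m+1)D + \beta F$ with $\beta \geq 0$, if $\Gamma \cdot D < 0$ then Corollary~\ref{c_bpf} shows that the directrix $D$ is a fixed component of $|\Gamma|$; since $\Gamma \neq D$, no member of $|\Gamma|$ is a smooth irreducible curve and $\psirmed$ is empty. So I may assume $\Gamma \cdot D \geq 0$, whence $|\Gamma|$ is base point free and $h^1(\fd,\cO(\Gamma)) = 0$ by Corollaries~\ref{c_bpf} and~\ref{c_h1}; in particular $\dim |\Gamma| = g+d-r+2m+\eps+3$, exactly as in the balanced case.

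Now fix a smooth $\zeta \in |H_r|$ and let $W$ be the image of the restriction map, a subspace of $H^0(\zeta,\cO_\zeta(d)) \cong H^0(\PP^1,\cO(d))$ of codimension $c \leq \delta-2$. The geometric observation I would exploit is that, as $p$ ranges over $\zeta \cong \PP^1$, the divisors $d\cdot p$ trace out a rational normal curve $\Sigma$ of degree $d$ in $\PP^d = \PP\,H^0(\zeta,\cO_\zeta(d))$, and a member of $|\Gamma|$ not containing $\zeta$ satisfies the defining condition of $\psirmed$ (for some $p$) exactly when its restriction to $\zeta$ lands on $\Sigma$. Since $\Sigma$ spans $\PP^d$, it lies in the proper subspace $\PP(W)$ only when $c=0$; thus for fixed $\zeta$ the point $p$ moves freely in $\zeta$ if $c=0$, but is pinned to the finite set $\Sigma \cap \PP(W)$ once $c \geq 1$. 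The Bertini and base-point analysis of Proposition~\ref{p_dimpsi1} carries over essentially verbatim — the admissible sub-series has $p$ as its only base point — so a general admissible curve is smooth and irreducible, and the fibre over a valid $p$ has dimension $\dim|\Gamma| - (d-c) = g-r+2m+\eps+3+c$.

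Assembling the count over the $r$-dimensional family of smooth $\zeta$ gives $\dim \psirmed = g+2m+\eps+3+c$ when $c \geq 1$ (the free choice of $p$ being lost) and $\dim \psirmed = g+2m+\eps+4$ when $c = 0$. Thus in every nonempty case $\dim \psirmed \leq g+2m+\eps+4$, which already lies strictly below $g+2m+\eps+4+(\delta-2)$ whenever $\delta \geq 3$; and when $c \geq 1$ the sharper value $g+2m+\eps+3+c \leq g+2m+\eps+1+\delta$ is strictly below the bound for every $\delta \geq 2$. I expect the main obstacle to be the borderline coordination between the two effects: one must verify that the cokernel attains its maximal value $\delta-2$ only when $(\Gamma - H_r)\cdot D = 1-\delta$, which by the intersection computation of Lemma~\ref{l_h1bound} happens — under the standing hypotheses, with $(m,\eps) = (2,0)$ excluded — only for $\delta = r-1,\ \eps = 0$, precisely the case where $\Gamma\cdot D < 0$ forces $\psirmed$ to be empty. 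Pushing Lemma~\ref{l_h1bound} to its equality case and feeding it back into the emptiness criterion of the second step is therefore where the argument must be most careful, since it is exactly here that the crude fibre estimate $g-r+2m+\eps+3+(\delta-2)$ would otherwise be realized alongside a full-dimensional base of $\zeta$'s.
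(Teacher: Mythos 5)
Your dimension count follows the same skeleton as the paper's proof --- fix a smooth $\zeta \in |H_r|$, control the restriction map $H^0(\fd,\cO_{\fd}(\Gamma)) \to H^0(\zeta,\cO_\zeta(d\cdot p))$ via Lemma \ref{l_h1bound}, bound the fibre of the forgetful map over $(\zeta,p)$ by the sub-series $\mathfrak{d}$, and add $r+1$ for the base --- but your version is genuinely sharper. The paper uses only the uniform codimension bound $d+2-\delta$ for $\mathfrak{d}$ and concludes $\dim\psirmed \leq \dim|\Gamma| - (d+2-\delta)+r+1 = g+2m+\eps+2+\delta$; it never exploits the tension you identify between a nonzero cokernel and the freedom of $p$. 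Your observation that $c \geq 1$ pins $p$ to the finite set $\Sigma \cap \PP(W)$ (because the rational normal curve $\Sigma$ of divisors $d\cdot p$ spans $\PP^d$) gives $\dim\psirmed \leq \max\bigl(g+2m+\eps+4,\ g+2m+\eps+3+c\bigr)$, which is strictly below $g+2m+\eps+2+\delta$ for every $\delta \geq 3$. Your emptiness criterion when $\Gamma\cdot D < 0$ is also correct and absent from the paper.

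The gap is exactly where you suspected, at $\delta = 2$, and your final paragraph does not close it. For $\delta = 2$ the bound of Lemma \ref{l_h1bound} is $\max(\delta-2,0)=0$, so $c=0$ holds automatically, with no need for the equality $(\Gamma-H_r)\cdot D = 1-\delta$; hence your argument ``maximal cokernel forces $\delta = r-1$, $\eps=0$, hence emptiness'' says nothing about this case, and your count yields exactly $g+2m+\eps+4$, not strictly less. Moreover, no argument can do better: take $r=3$, $\delta=2$, $\eps=1$ (even-degree curves on the quadric cone in $\PP^3$, avoiding the vertex). Then $\Gamma\cdot D = 0$ and $(\Gamma-H_r)\cdot D = 0$, so the restriction map is surjective and $|\Gamma-\zeta|$ is base point free, and the Bertini argument of Proposition \ref{p_dimpsi1} runs verbatim: $\psirmed$ is nonempty of dimension exactly $g+2m+\eps+4$. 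So the strict inequality in the statement is false at $\delta=2$; only the non-strict bound holds there. You are in good company: the paper's own proof likewise establishes only $\dim\psirmed \leq g+2m+\eps+2+\delta$ (its last line computes this and says ``as desired''), and the non-strict bound is all that Corollary \ref{c_dimPhi} actually uses --- the scroll space loses $\delta-1$ dimensions while $\psirmed$ gains at most $\delta-2$, still giving codimension at least $1$ in $\Phi_{r,m,\eps}$. The correct repair is to restate the proposition with $\leq$; your proof then works, and for $\delta \geq 3$ it proves strictly more than the paper's does.
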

\begin{proof}
Fix a smooth $\zeta \in |H_r|$ and $p \in \zeta$. As in the proof of Proposition \ref{p_dimpsi1}, let $\mathfrak{d}$ denote the sub series of $|\Gamma|$ consisting of divisors either containing $\zeta$ entirely or meeting it to order $d$ at $p$. Lemma \ref{l_h1bound} implies that the codimension of $\mathfrak{d}$ in $|\Gamma|$ is at least $d+2 - \delta$.

By Corollary \ref{c_psiDesc}, the fiber over $(\zeta,p)$ of the forgetful map (as in the proof of Proposition \ref{p_dimpsi1}) is either empty or an open subset of $\mathfrak{d}$. If it is nonempty, then $\Gamma . D \geq 0$, since $|\Gamma|$ has elements that are smooth irreducible curves not equal to $D$ itself. So we deduce in this case, as in the proof of Proposition \ref{p_dimpsi1}, that $\dim |\Gamma| = g+d-r+2m+\eps+3$. It follows that, if $\Psi^\delta_{r,m,\eps}$ is nonempty, then its dimension is at most $\dim |\Gamma| - (d+2 - \delta) + r+1 = g + 2m + \eps + 2 + \delta$, as desired.
\end{proof}

%-----------
\section{The irreducible components of $\cM^S_{g,1}$} \label{s_msg}

We now use the analysis of the previous two sections to determine the irreducible components of $\cM^{S_{r,d}}_{g,1}$. This amount to making dimension comparisons between parameter spaces on a fixed surface, in the Hilbert scheme, and in $\cM_{g,1}$.

\begin{prop} \label{verParams}
The parameter space of Veronese surfaces in $\PP^5$ is irreducible of dimension $27$.
\end{prop}
\begin{proof}
All Veronese surfaces in $\PP^5$ are projectively equivalent, every automorphism of a Veronese surface is induced by an automorphism of $\PP^5$, and there are no nontrivial automorphisms of $\PP^5$ fixing all points of a Veronese surface. Therefore the space of Veronese surfaces has dimension $\dim \Aut \PP^5 - \dim \Aut \PP^2 = 35-8 = 27$.
\end{proof}

Combining this with Proposition \ref{p_verPsiDim}, we obtain the following.

\begin{cor} \label{c_verDimPhi}
The parameter space $\Phi^{\textrm{ver}}_{5,d}$ is irreducible of dimension $g + 2m + \frac{\eps-1}{2} + 33$.
\end{cor}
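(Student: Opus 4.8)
The plan is to realize $\Phi^{\textrm{ver}}_{5,d}$ as a bundle over the parameter space $B$ of Veronese surfaces in $\PP^5$, with every fiber isomorphic to $\psiver$, and then to combine the dimension and irreducibility of the base (Proposition \ref{verParams}) with those of the fiber (Proposition \ref{p_verPsiDim}) in the standard way. First I would introduce the forgetful morphism $\pi \colon \Phi^{\textrm{ver}}_{5,d} \to B$ sending a triple $(C,\zeta,p)$ to the Veronese surface $V$ on which $C$ lies; uniqueness of this minimal-degree surface, which makes $\pi$ well defined, is part of the Castelnuovo structure theory recalled in Section \ref{ss_hilbert}.

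Next I would identify the fiber $\pi^{-1}(V)$ with $\psiver$. Fixing the isomorphism $V \cong \PP^2$ coming from the Veronese embedding, a curve $C$ of class $\frac{d}{2}H$ becomes a smooth plane curve of degree $k = \frac{d}{2}$, and the hyperplanes of $\PP^5$ pull back bijectively to the conics $|\cO_{\PP^2}(2)|$ because the Veronese surface is linearly normal. Under this dictionary the conditions cutting out $\Phi^{\textrm{ver}}_{5,d}$ (that $\zeta \cap C = \{p\}$ and that the remaining hyperplanes meet $C$ to orders $0,1,2,3,4$ at $p$) translate precisely into those defining $\psiver$; indeed, by Corollary \ref{c_verPsiDesc} both reduce to the single requirement that $C$ meet $\zeta$ to order $d$ at $p$. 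Hence each fiber of $\pi$ is isomorphic to $\psiver$, which Proposition \ref{p_verPsiDim} shows is irreducible of dimension $g + 2m + \frac{\eps-1}{2} + 6$.

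For irreducibility of the total space I would exploit the natural action of $G = \Aut \PP^5$, which permutes triples $(C,\zeta,p)$ and makes $\pi$ equivariant. Proposition \ref{verParams} records that $G$ acts transitively on $B$, since all Veronese surfaces are projectively equivalent. Consequently the action map $G \times \psiver \to \Phi^{\textrm{ver}}_{5,d}$, obtained by letting $G$ move the fiber over one fixed Veronese surface, is surjective; its source is a product of irreducible varieties and hence irreducible, so its image $\Phi^{\textrm{ver}}_{5,d}$ is irreducible as well. The transitivity of $G$ on $B$ also guarantees that all fibers of $\pi$ are isomorphic, so they have the same dimension $\dim \psiver$ on the nose; the fiber-dimension theorem applied to the dominant map $\pi$ onto the irreducible base of dimension $27$ then yields $\dim \Phi^{\textrm{ver}}_{5,d} = 27 + (g + 2m + \frac{\eps-1}{2} + 6) = g + 2m + \frac{\eps-1}{2} + 33$.

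I expect the only real bookkeeping to be the fiber identification of the second step, in particular verifying that the correspondence between hyperplanes of $\PP^5$ and conics on $V$ is a bijection respecting every incidence condition, so that $\pi^{-1}(V)$ is literally $\psiver$ rather than merely a space mapping to it. The transitivity statement of Proposition \ref{verParams} is what makes this step uniform across all fibers and lets me sidestep any question of flatness for $\pi$, reducing the conclusion to the two dimension counts already in hand.
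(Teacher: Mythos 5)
Your proposal is correct and follows essentially the same route as the paper, which proves Corollary \ref{c_verDimPhi} in one line by combining Proposition \ref{verParams} (the irreducible, $27$-dimensional space of Veronese surfaces) with Proposition \ref{p_verPsiDim} (the irreducible fiber $\psiver$ of dimension $g+2m+\frac{\eps-1}{2}+6$). The paper leaves the fibration structure, the fiber identification, and the transitivity of $\Aut \PP^5$ implicit; your write-up simply makes these details explicit.
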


\begin{prop} \label{p_scrollParams}
The parameter space of scrolls of invariant $\delta$ in $\PP^r$ is irreducible of dimension $(r+1)^2 -6 - \delta$ if $\delta > 0$, and $(r+1)^2 - 7$ if $\delta = 0$.
\end{prop}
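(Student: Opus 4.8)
The plan is to mirror the proof of Proposition \ref{verParams}: I would show that all scrolls of a given invariant $\delta$ in $\PP^r$ form a single orbit under $\Aut \PP^r$, so that the parameter space is the homogeneous space $\Aut\PP^r / G_\delta$, where $G_\delta$ is the stabilizer of one fixed scroll $S_0$, i.e.\ the group of projective automorphisms of $\PP^r$ carrying $S_0$ to itself. Since $\Aut \PP^r = PGL_{r+1}$ is connected, this orbit is irreducible, and its dimension is $(r+1)^2 - 1 - \dim G_\delta$. Everything then reduces to computing $\dim G_\delta$, and the two cases in the statement should correspond exactly to the two cases in the value of $\dim G_\delta$.

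First I would check that any two scrolls of invariant $\delta$ are projectively equivalent. A scroll of invariant $\delta$ is abstractly $\fd$ (or, when $\delta = r-1$, the cone obtained by contracting its directrix), and by Corollary \ref{c_hirzRNS} it is embedded by the complete linear system $|H_r|$, which is intrinsic to the surface. Choosing an abstract isomorphism between two such surfaces that respects this polarization therefore induces an element of $PGL_{r+1}$ identifying the two scrolls. The same reasoning identifies $G_\delta$ with the group of abstract automorphisms of $\fd$ preserving the class $H_r$: because the scroll is projectively normal with $h^0(\cO(1)) = r+1$, every automorphism of the scroll acts linearly on its space of hyperplane sections, and conversely every such automorphism preserving $H_r$ descends to $PGL_{r+1}$.

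Next I would compute $\dim G_\delta = \dim\{\sigma \in \Aut\fd : \sigma^* \cO_{\fd}(H_r) \cong \cO_{\fd}(H_r)\}$. For $\delta \geq 1$, every automorphism of $\fd$ fixes the classes $F$ (the ruling) and $D$ (the unique irreducible curve of negative self-intersection), hence fixes $H_r = D + \frac12(r-1+\delta)F$, so $G_\delta = \Aut \fd$ in its entirety. Writing $\fd = \PP(\cO \oplus \cO(\delta))$, its automorphisms consist of the $PGL_2$ acting on the base $\PP^1$ (dimension $3$) together with the projectivized bundle automorphisms, of dimension $\dim \Aut(\cO\oplus\cO(\delta)) - 1 = (\delta + 3) - 1 = \delta + 2$; this gives $\dim \Aut \fd = \delta + 5$. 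For $\delta = 0$ the surface is $\PP^1 \times \PP^1$ and the bundle is trivial, so the relevant automorphisms form the connected group $PGL_2 \times PGL_2$ of dimension $6$ (the factor-swap does not preserve $H_r$ once $r > 3$, but in any case contributes no dimension). Substituting into $(r+1)^2 - 1 - \dim G_\delta$ yields $(r+1)^2 - 6 - \delta$ for $\delta \geq 1$ and $(r+1)^2 - 7$ for $\delta = 0$, as claimed.

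The step requiring the most care is the computation of $\dim G_\delta$ in the two degenerate situations. For $\delta = 0$ one must note that $\Aut \fd$ jumps in dimension (it is $6$, not $0 + 5 = 5$), which is precisely what produces the separate formula. For the singular scroll $\delta = r-1$, the surface is the cone and is not isomorphic to $\fd$; here I would pass to the resolution obtained by blowing up the vertex, which is $\FF_{r-1}$ with the directrix as exceptional curve, observe that automorphisms of the cone fix the vertex and so lift to automorphisms of $\FF_{r-1}$ preserving $D$ (all of which do), and descend back, giving $G_{r-1} \cong \Aut \FF_{r-1}$ of dimension $(r-1)+5 = r+4 = \delta + 5$. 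This keeps the formula $(r+1)^2 - 6 - \delta$ valid at $\delta = r-1$.
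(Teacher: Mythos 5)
Your proposal is correct, but it takes a genuinely different route from the paper's proof. The paper never touches automorphism groups of scrolls: it parametrizes a scroll of invariant $\delta < r-1$ by the synthetic data of a rational normal curve of degree $a = \frac12(r-1-\delta)$, a rational normal curve of degree $b = \frac12(r-1+\delta)$, and an isomorphism between them (a space of dimension $(r+1)^2 - 5$), and then subtracts the dimension of the redundancy in this choice, which it identifies as $\dim |D| + \dim |D'|$ and computes from Corollary \ref{c_h0}: this is $0 + (\delta+1)$ for $\delta > 0$ and $1+1$ for $\delta = 0$, which is exactly where the case split in the statement comes from; the singular scroll $\delta = r-1$ is handled by the same count with a point of $\PP^r$ replacing the degree-$a$ curve and the isomorphism omitted (the two changes cancel). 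Your orbit--stabilizer argument instead mirrors the paper's own proof of Proposition \ref{verParams} for the Veronese surface, so it is arguably more uniform with the rest of the section: irreducibility is immediate from connectedness of $\Aut \PP^r$, and the case split appears as the jump of $\dim \Aut \FF_\delta$ from $\delta + 5$ (for $\delta \geq 1$, where the ruling and the directrix are preserved by every automorphism) to $6$ (for $\FF_0 = \PP^1 \times \PP^1$), with the cone treated by lifting automorphisms to the resolution $\FF_{r-1}$. The trade-off is that your argument imports standard facts not established in the paper --- the computation of $\Aut \FF_\delta$ via relative automorphisms of $\PP(\cO \oplus \cO(\delta))$, and the identification of the projective stabilizer with the automorphisms of the polarized abstract surface, which rests on linear normality ($h^0(\cO_{\fd}(H_r)) = r+1$, available from Corollaries \ref{c_h0} and \ref{c_hirzRNS}) --- whereas the paper's count uses only its own cohomology formula. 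Both arguments are complete and yield the same dimensions.
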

\begin{proof}
Suppose first that $\delta < r-1$. Let $a = \frac12(r-1-\delta)$ and $b = \frac12(r-1+\delta)$. A particular scroll of invariant $\delta$ is determined by a rational normal curve of degree $a$ (spanning an $a$-plane in $\PP^r$), a rational normal curve of degree $b$ (spanning a $b$-plane), and an isomorphism between them. The space of choices of these data has dimension $(r+1)(a+1)-4 + (r+1)(b+1)-4 + 3 = (r+1)^2 - 5$. These two rational curves have classes $D$ and $D'$ in the notation of Section \ref{s_hirzebruch}. A different choice of the two rational curves gives the same surface in $\PP^r$ if and only if they are, respectively, images under the map $\textbf{F}_{\delta} \rightarrow S_{a,b}$ of curves in the classes $D$ and $D'$, respectively. Hence the dimension of this parameter space is $(r+1)^2 - 5 - \dim |D| - \dim |D'|$. Corollary \ref{c_h0} shows that if $\delta > 0$ then $\dim |D| = 0$ and $\dim |D'| = \delta +1$, while if $\delta = 0$ then $\dim |D| = \dim |D'| = 1$.

If $\delta = r-1$, the argument is essentially the same, except that rather than choosing a degree $a$ rational curve, one chooses a single point in $\PP^r$, and there is no need to choose an isomorphism between the two rational curves. These differences cancel each other to give the same expression for the number of parameters.
\end{proof}

\begin{cor} \label{c_dimPhi}
The parameter space $\Phi_{r,m,\eps}$ is irreducible of dimension $g+2m+\eps -2 + \dim \Aut \PP^r$, and for $\delta \in \{0,1\}$ ($\delta \equiv r-1 \mod{2}$), $\Phi^{\delta}_{r,m,\eps}$ is dense in $\Phi_{r,m,\eps}$.
\end{cor}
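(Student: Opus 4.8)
The plan is to stratify $\Phi_{r,m,\eps}$ by the invariant $\delta$ of the scroll on which $C$ lies, realize each stratum $\Phi^\delta_{r,m,\eps}$ as fibered over the parameter space of scrolls with fiber the fixed-surface space $\psirmed$, and then compare dimensions across strata. Write $\mathcal{S}_\delta$ for the parameter space of scrolls of invariant $\delta$ in $\PP^r$ (Proposition \ref{p_scrollParams}). First I would consider the forgetful morphism $\Phi^\delta_{r,m,\eps} \to \mathcal{S}_\delta$ sending $(C,\zeta,p)$ to the scroll containing $C$; this is well-defined because a Castelnuovo curve lies on a unique surface of minimal degree. Since all scrolls of a fixed invariant are projectively equivalent, $\Aut \PP^r$ acts transitively on $\mathcal{S}_\delta$, so $\mathcal{S}_\delta$ is a homogeneous space, and the $\Aut \PP^r$-action lifts to $\Phi^\delta_{r,m,\eps}$. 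Identifying a fixed scroll $X$ with $\fd$ via $\phi_{|H_r|}$ (an isomorphism when $\delta \in \{0,1\}$, since then $\delta < r-1$), the hyperplane and ruling classes pull back to $H_r$ and $F$, so the class $(m+1)H-(r-2-\eps)F$ becomes $\Gamma$ and the fiber of the forgetful map over $X$ is exactly $\psirmed$. Thus $\Phi^\delta_{r,m,\eps}$ is an associated fiber bundle over the homogeneous space $\mathcal{S}_\delta$ with fiber $\psirmed$; as $\Aut \PP^r$ is connected and (for $\delta \in \{0,1\}$) $\psirmed$ is irreducible by Proposition \ref{p_dimpsi1}, the total space is irreducible, and the orbit--stabilizer relation gives $\dim \Phi^\delta_{r,m,\eps} = \dim \mathcal{S}_\delta + \dim \psirmed$ with no correction term.

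Next I would run the dimension bookkeeping, using $\dim \Aut \PP^r = (r+1)^2 - 1$. For the balanced value $\delta \in \{0,1\}$, Proposition \ref{p_dimpsi1} gives $\dim \psirmed = g+2m+\eps+4$, while Proposition \ref{p_scrollParams} gives $\dim \mathcal{S}_\delta = (r+1)^2 - 7$ in both cases $\delta=0$ and $\delta=1$; adding, $\dim \Phi^\delta_{r,m,\eps} = g+2m+\eps-2+\dim\Aut\PP^r$. For $\delta \geq 2$, the bound $\dim \psirmed < g+2m+\eps+2+\delta$ of Proposition \ref{p_dimpsi2} combines with $\dim \mathcal{S}_\delta = (r+1)^2 - 6 - \delta$, the two $\delta$-terms cancelling, to yield $\dim \Phi^\delta_{r,m,\eps} < g+2m+\eps-3+\dim\Aut\PP^r$. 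Hence the unique balanced stratum has dimension $N := g+2m+\eps-2+\dim\Aut\PP^r$, strictly larger than every unbalanced stratum.

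Finally I would assemble the components. The balanced stratum is open in $\Phi_{r,m,\eps}$, since having minimal invariant is an open condition (the scroll invariant is upper semicontinuous). By Lemma \ref{l_dimbound}, every irreducible component $Z$ of $\Phi_{r,m,\eps}$ has dimension at least $N$; since the union of the unbalanced strata has dimension strictly less than $N$, the component $Z$ cannot be contained in it and so must meet the open balanced stratum. Then $Z \cap \Phi^\delta_{r,m,\eps}$ is a nonempty open, hence dense, subset of the irreducible $Z$, and it is contained in the irreducible balanced stratum; therefore $Z \subseteq \overline{\Phi^\delta_{r,m,\eps}}$, and by maximality $Z = \overline{\Phi^\delta_{r,m,\eps}}$. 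Thus $\Phi_{r,m,\eps}$ has a single irreducible component, the closure of the dense balanced stratum, of dimension $N$, which is exactly the assertion.

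I expect the main obstacle to be the first step: setting up the fibration $\Phi^\delta_{r,m,\eps} \to \mathcal{S}_\delta$ rigorously and transferring the fixed-surface analysis of Section \ref{s_hirzebruch} to the family of embedded scrolls. Concretely, one must check that the identification of each fiber with $\psirmed$ is compatible across the family, that the $\Aut \PP^r$-action genuinely exhibits $\Phi^\delta_{r,m,\eps}$ as an associated bundle (so that connectedness of the group forces irreducibility of the total space), and that the passage via $\phi_{|H_r|}$ from the abstract $\fd$ to an embedded scroll introduces no cone-point pathology in the balanced range. Everything after this — the arithmetic of dimensions and the closure argument powered by Lemma \ref{l_dimbound} — is routine.
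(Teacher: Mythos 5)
Your proposal is correct and takes essentially the same route as the paper: stratify $\Phi_{r,m,\eps}$ by the scroll invariant $\delta$, realize each stratum as fibered over the scroll parameter space with fiber $\psirmed$, and combine Propositions \ref{p_dimpsi1}, \ref{p_dimpsi2}, and \ref{p_scrollParams} with Lemma \ref{l_dimbound} to see that only the balanced stratum can carry a component. The one cosmetic difference is your finishing step: where you invoke openness of the balanced stratum (upper semicontinuity of the scroll invariant, a standard fact but not proved in the paper), the paper instead observes that an irreducible component lies in a finite union of stratum closures, hence in a single one, and the dimension bounds force that to be the closure of the balanced stratum---an argument that makes the semicontinuity claim unnecessary.
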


\begin{proof}
Propositions \ref{p_dimpsi1} and \ref{p_scrollParams} imply that for $\delta \in \{0,1\}$ (whichever is congruent to $r-1$ modulo $2$), $\Phi^\delta_{r,m,\eps}$ is irreducible of the desired dimension. On the other hand, Propositions \ref{p_dimpsi2} and \ref{p_scrollParams}, and Lemma \ref{l_dimbound},  show that for $\delta > 1$, any irreducible component of $\Phi^\delta_{r,m,\eps}$ has codimension at least $(\delta-1)-(\delta-2) = 1$ in $\Phi_{r,m,\eps}$. Hence no irreducible component of $\Phi_{r,m,\eps}$ lies in the closure of $\Phi^\delta_{r,m,\eps}$ for $\delta > 1$.

Therefore there is only one irreducible component, the closure of the locus corresponding to curves on balanced scrolls, of the claimed dimension.
\end{proof}

\begin{defn}
Denote by $\cw_{r,m,\eps}$ the image of $\Phi_{r,m,\eps}$ in $\cM_{g,1}$, and by $\cw^{\textrm{ver}}_{r,d}$ the image of $\Phi^{\textrm{ver}}_{r,d}$ in $\cM_{g,1}$. 
\end{defn}

We can now state and prove the main theorem of the paper, characterizing the irreducible components of $\cM^{S_{r,d}}_{g,1}$.

\begin{thm} \label{t_main}
Let $r,d$ be integers with $r \geq 3$ and $d \geq 2r+1$. Let $m,\eps$ be the quotient and remainder when $d-1$ is divided by $r-1$. Then one irreducible component of $\cM^{S_{r,d}}_{g,1}$ is $\cw_{r,m,\eps}$, which has dimension $g+2m+\eps-2$. In the following two cases, there is a second irreducible component, while in all other cases $\cM^{S_{r,d}}_{g,1}$ is irreducible.
\begin{enumerate}
\item If $r \geq 4$ and $r-1$ divides $d-1$, then $\cw_{r,m-1,r-1}$ is a second irreducible component, and its dimension is $r-3$ greater than the other's.
\item If $r = 5$ and $d$ is even, then the closure of $\cw^{\textrm{ver}}_{r,d}$ in $\cM^{S_{r,d}}_{g,1}$ is a second irreducible component, and its dimension is $\frac{\eps+1}{2}$ less than the other's.
\end{enumerate}
\end{thm}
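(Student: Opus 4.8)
The plan is to transfer the whole problem to the parameter space $\Phi_{r,d}$ and read off its irreducible components from the known structure of the Hilbert scheme $\cH_{r,d}$. By Corollary \ref{c_mapToMg}, the forgetful map $\Phi_{r,d} \to \cM_{g,1}$ has image $\cM^{S_{r,d}}_{g,1}$ and induces a bijection on irreducible components, lowering each dimension by exactly $\dim \Aut \PP^r$. Thus it suffices to determine the components of $\Phi_{r,d}$ together with their dimensions and then subtract $\dim \Aut \PP^r$ throughout; the images of the resulting components are the loci $\cw_{r,m,\eps}$, $\cw_{r,m-1,r-1}$, and $\overline{\cw^{\textrm{ver}}_{r,d}}$. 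Nonemptiness of $\cM^{S_{r,d}}_{g,1}$ comes for free, since the irreducibility and dimension assertions below already presuppose that the relevant strata are nonempty.

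First I would stratify $\Phi_{r,d}$ according to which irreducible component of $\cH_{r,d}$ contains the underlying curve $[C]$. Every triple $(C,\zeta,p)\in\Phi_{r,d}$ has $[C]\in\cH_{r,d}$, and by construction $\Phi_{r,m,\eps}$ is exactly the set of triples whose curve has class $(m+1)H-(r-2-\eps)F$ on a scroll (and similarly for the Veronese locus). Hence Theorem \ref{t_ciliberto} gives $\Phi_{r,d}=\Phi_{r,m,\eps}$ in all cases not covered by (1) or (2), while $\Phi_{r,d}=\Phi_{r,m,0}\cup\Phi_{r,m-1,r-1}$ in case (1) (where $\eps=0$) and $\Phi_{5,d}=\Phi_{5,m,\eps}\cup\Phi^{\textrm{ver}}_{5,d}$ in case (2). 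These two cases are mutually exclusive, since case (1) forces $(r-1)\mid(d-1)$ while case (2) forces $d$ even (hence $d-1$ odd and $\eps\in\{1,3\}$). Corollary \ref{c_dimPhi} shows each scroll piece is irreducible of dimension $g+2m'+\eps'-2+\dim\Aut\PP^r$ for the appropriate pair $(m',\eps')$, and Corollary \ref{c_verDimPhi} shows $\Phi^{\textrm{ver}}_{5,d}$ is irreducible of dimension $g+2m+\frac{\eps-1}{2}+33$.

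The dimension bookkeeping is then routine. Descending by $\dim\Aut\PP^r$ yields $\dim\cw_{r,m,\eps}=g+2m+\eps-2$. In case (1) the second piece $\Phi_{r,m-1,r-1}$ has dimension $g+2(m-1)+(r-1)-2+\dim\Aut\PP^r$, which (using $\eps=0$) exceeds $\dim\Phi_{r,m,0}$ by exactly $[2(m-1)+(r-1)]-2m=r-3$; this difference is preserved on passing to $\cM_{g,1}$. In case (2), with $\dim\Aut\PP^5=35$, the main piece has dimension $g+2m+\eps+33$, exceeding $\dim\Phi^{\textrm{ver}}_{5,d}$ by $\eps-\frac{\eps-1}{2}=\frac{\eps+1}{2}$, so the Veronese component is smaller by $\frac{\eps+1}{2}$.

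The substance of the argument is confirming that these pieces are genuinely distinct maximal components and that there are no others. Because the two pieces have different dimensions, the larger cannot lie in the closure of the smaller. For the reverse inclusion I would push everything down to $\cH_{r,d}$: the closure $\overline{\Phi_{r,m-1,r-1}}$ maps into the closed component $\cH_{r,m-1,r-1}$, whereas by the density statement in Corollary \ref{c_dimPhi} a general triple of $\Phi_{r,m,0}$ has its curve on a \emph{balanced} scroll. Since the two components of $\cH_{r,d}$ meet only along the singular-scroll locus $\cH^{r-1}_{r,m,0}$ (Theorem \ref{t_ciliberto}), such a curve does not lie in $\cH_{r,m-1,r-1}$, so $\Phi_{r,m,0}\not\subseteq\overline{\Phi_{r,m-1,r-1}}$. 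The identical reasoning, using that $\cH_{5,m,\eps}$ and $\overline{\cH^{\textrm{ver}}_{5,d}}$ are distinct components (disjoint, or meeting only in $\cH^4_{5,m,3}$), settles case (2). I expect this separation argument to be the main obstacle: one must rule out a boundary degeneration absorbing one stratum into the other, and the cleanest route is to reduce to $\cH_{r,d}$, where both the component structure and the intersection loci are already known, and then invoke the density of balanced scrolls to exhibit curves lying in one component but not the other.
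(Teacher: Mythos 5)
Your proposal is correct and follows essentially the same route as the paper's proof: reduce to $\Phi_{r,d}$ via Corollary \ref{c_mapToMg}, decompose it according to Theorem \ref{t_ciliberto} into the pieces whose irreducibility and dimensions are given by Corollaries \ref{c_dimPhi} and \ref{c_verDimPhi}, and separate the resulting components using the fact that the Hilbert-scheme components meet only along the singular-scroll (or Veronese-intersection) locus together with the density of balanced scrolls. The only cosmetic difference is that you use a dimension comparison for one of the two non-containment directions, where the paper applies the singular-scroll argument symmetrically to both.
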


\begin{proof}
Corollaries \ref{c_verDimPhi} and \ref{c_dimPhi} show that the loci $\Phi_{r,m,\eps}$ and $\Phi_{r,d}$ (for all choices of $m,\eps,r,d$) are all irreducible, and give their dimensions (each of which exceeds the claimed dimension of the component in $\cM^{S_{r,d}}_{g,1}$ by $\dim \Aut \PP^r$). None of these loci can have closure contained in the closure of another locus, since then Theorem \ref{t_ciliberto} would imply that all points in one of these loci correspond to curves on singular scrolls, but these are a proper subset of the closure of $\Phi^{\textrm{ver}}_{r,d}$, and also of $\Phi_{r,m,\eps}$ by Corollary \ref{c_dimPhi}. So the various components $\Phi_{r,m,\eps}$ and the closures of the components $\Phi^{\textrm{ver}}_{5,d}$ constitute the irreducible components of the varieties $\Phi_{r,d}$.

Corollary \ref{c_mapToMg} now shows that the irreducible components of $\cM^{S_{r,d}}_{g,1}$, and their dimensions, are as claimed.
\end{proof}

From this we deduce the Theorem from the introduction, which is to say the inequalities on the dimensions of the components of $\cM^{S_{r,d}}_{g,1}$.

\begin{proof}[Proof of Theorem \ref{t_intro}]
Part (2) is included in Theorem \ref{t_main}. To prove the inequalities in part (1) for both types of irreducible components coming from curves on scrolls, it suffices to check, for any integers $r,m,\eps$ with $r \geq 3$, $m \geq 2$, $0 \leq \eps \leq r-1$, and $\eps \geq 2$ in the case $m =2$, the inequalities
$$
5 \leq 2m + \eps \leq \frac13 \left[ \binom{m}{2} (r-1) + m \eps \right] + 4.
$$
Note that the two different possible dimensions of components are accounted for since $\eps$ can be either $0$ or $r-1$. The stronger lower bound $6 \leq 2m + \eps$ follows immediately from our assumptions. The upper bound is equivalent to $0 \leq \binom{m}{2} (r-1) - 6m + (m-3)\eps + 12$. In the case $m=2$, the right side is equal to $(r-1) - \eps$, which is nonnegative. In the case $m -3\geq 0$, the right side is at least $\binom{m}{2} \cdot 2 - 6m + 12 = (m-3)(m-4) \geq 0$.

For the component from curves on Veronese surfaces (when $r=5$), it is only necessary to check the lower bound, which amounts to $5 \leq 2m + \frac{\eps-1}{2}$. For $m=2$ the only possibility is $\eps = 3$, for which equality is obtained. For $m \geq 3$ the inequality follows since $\eps-1 \geq 0$.
\end{proof}

\begin{rem} \label{r_thmSharp}
Checking equality cases in the proof above shows that both bounds are sharp. The lower bound $g+3 \leq \dim X$ occurs only in one case: $\cw^{\textrm{ver}}_{5,12}$, for the Veronese component. However, $\dim X = g+4$ is attained in infinitely many cases: $\cw^{\textrm{ver}}_{5,14}$ and when $d=2r+1$ or $d=3r-2$ (on $\cw_{r,m,\eps}$). The upper bound $\dim X \leq \frac43 g + 2$ is also sharp; it occurs when $d = 3r-2$ for any $r$ (for the larger of the two components), and when $(r,d) = (3,8)$ or $(3,9)$.
\end{rem}

%-----------
\section{Effective weights of Castelnuovo semigroups} \label{s_ew}

We explain in this section our original interest in Castelnuovo semigroups.

In \cite{pfl}, we prove that for every numerical semigroup $S$ of genus $g$, every component of $\cM^S_{g,1}$ has dimension at least $3g-2 - \ewt(S)$, where $\ewt(S)$ was defined in the introduction, and prove the existence of \emph{effectively proper components}, that is components where equality is achieved, for all semigroups such that $\ewt(S) \leq g-1$. Several specific families of numerical semigroups are provided in that paper, where $\cM^S_{g,1}$ has codimension larger than $g$ and is effectively proper.

We were originally led to study Castelnuovo semigroups by searching for semigroups for which the effective weight bound was likely to not hold with equality. A natural way to search for such examples is to find those semigroups of maximum effective weight in each genus. This led to the following conjecture (Conjecture 1.7 in \cite{pfl}), which has been have verified for all semigroups of genus up to $50$ by a computer search\footnote{Source code in C++ is available on the author's academic website; the search took approximately 17 hours on a 3.4GHz Intel i7-3770 CPU.}.

\begin{conj} \label{conj_ew}
For any numerical semigroup $S$ of genus $g$, $\ewt(S) \leq \left\lfloor \frac{(g+1)^2}{8} \right\rfloor$. For $g \geq 10$, the only equality cases are Castelnuovo semigroups.
\end{conj}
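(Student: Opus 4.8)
The plan is to prove the two assertions separately: first the uniform bound $\ewt(S) \le \lfloor (g+1)^2/8\rfloor$ for every numerical semigroup $S$ of genus $g$, and then the characterization of the equality cases. I would begin with the Castelnuovo side, where everything is explicit. Writing $\ewt(S) = \sum_{a}\#\{\,b : b \text{ a gap},\ b > a\,\}$ (the sum over generators $a$), the description of the gaps in the proof of Lemma~\ref{l_genus} shows that for $S = S_{r,d}$ the gaps split into $m$ arithmetic segments, of which the first, $\{1,\dots,d-r\}$, lies entirely below the multiplicity $\mu = d-r+1$ and contributes nothing, while every gap in the remaining segments exceeds the largest generator $d$ and therefore has all $r$ generators below it. Hence $\ewt(S_{r,d}) = r\bigl(g-(d-r)\bigr)$.

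It then remains, on the Castelnuovo side, to maximize $r\bigl(g-(d-r)\bigr)$ over all $(r,d)$ with $\pi(r,d)=g$. I expect the maximum to occur at the semigroups with exactly two gap-segments (the case $m=2$ in the notation of Definition~\ref{d_pird}), where $g=(r-1)+2\eps$ and $\ewt(S_{r,d}) = r\eps$; substituting $\eps = (g-r+1)/2$ gives $\ewt = r(g-r+1)/2$, maximized at $r=(g+1)/2$ with value $(g+1)^2/8$. A short comparison (the analogous computation with three segments yields the asymptotically smaller optimum $(2g+3)^2/36$) confirms that the two-segment family dominates for large $g$ and pins down the candidate equality cases: the balanced contiguous semigroups with roughly $(g+1)/2$ generators based at the multiplicity.

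The real work is the uniform upper bound over \emph{all} semigroups. Here I would use the dual form $\ewt(S)=\sum_{b}N(b)$ (the sum over gaps $b$), with $N(b)$ the number of generators below $b$. Two ingredients constrain $N(b)$: the number of generators never exceeds the multiplicity $\mu$, so $N(b)\le \mu$; and if $a>\mu$ is a generator then $a-\mu$ is a gap, giving an injection from the generators above $\mu$ into the gaps, hence $N(b)\le 1 + \#\{\text{gaps below } b-\mu\}$. Using only the first estimate yields $\ewt(S)\le \mu(g-\mu+1)\le (g+1)^2/4$, off from the truth by exactly a factor of two. The factor-two improvement must come from combining the two estimates: a generator sitting well above the multiplicity consumes a gap through the injection $a\mapsto a-\mu$, and that gap then contributes less (or nothing) to the gaps lying above $a$. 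Quantifying this interaction so that it exactly recovers $(g+1)^2/8$ is the crux.

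I expect this last step to be the main obstacle, and it is presumably why the statement remains conjectural. The cleanest route is probably a compression argument: produce genus-preserving moves on semigroups that never decrease $\ewt$ and that drive an arbitrary semigroup toward one with a contiguous block of generators based at the multiplicity, with the Castelnuovo semigroups as the only local maxima. The difficulty is that, unlike set systems in Kruskal--Katona-type arguments, numerical semigroups do not admit obvious genus-preserving ``shift'' operations, so designing moves that are always available and monotone for $\ewt$ is delicate. Once a sharp uniform bound is in hand, the equality analysis should localize to contiguous generator sets, and the hypothesis $g\geq 10$ exactly excludes the small-genus coincidences (such as the two- and three-segment families tying) revealed by the optimization above; combined with the three easy low-codimension cases already discussed in the introduction, this would isolate the Castelnuovo semigroups as the unique extremizers.
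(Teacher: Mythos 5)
The statement you set out to prove is labeled a \emph{conjecture}, and the paper does not prove it: the only evidence offered there is a computer verification for all semigroups of genus up to $50$, together with Proposition \ref{p_castMaxEw}, which establishes the bound and the equality classification \emph{only within the family of Castelnuovo semigroups}. Your proposal is likewise not a proof, and to your credit you say so explicitly. The part you do carry out --- the formula $\ewt(S_{r,d}) = r(g-d+r)$ deduced from the segment description of the gaps (as in Lemma \ref{l_genus}), and the optimization in the two-segment case $m=2$ giving $\ewt = r(g-r+1)/2 \leq (g+1)^2/8$ with the maximum near $r = (g+1)/2$ --- is essentially the content of the paper's Proposition \ref{p_castMaxEw}. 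However, your treatment of $m \geq 3$ (``asymptotically smaller optimum $(2g+3)^2/36$'') is looser than the paper's: the paper uses AM--GM to bound $\ewt(S_{r,d})$ by $\frac{m-1}{2m^2}\bigl(g+\frac{m}{2}\bigr)^2$ and then a monotonicity argument (via the logarithmic derivative of $f(x) = \frac{x-1}{2x^2}\bigl(g+\frac{x}{2}\bigr)^2$) to get a \emph{strict} inequality for every $m \geq 3$ once $g \geq 10$, plus a finite check recording the small-genus equality cases $(r,d) = (3,7), (3,8), (4,10)$. An asymptotic comparison alone does not yield the clean statement for all $g \geq 10$.

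The genuine gap is exactly where you locate it: the uniform bound $\ewt(S) \leq \bigl\lfloor (g+1)^2/8 \bigr\rfloor$ over \emph{all} numerical semigroups of genus $g$. Your two estimates (the number of generators is at most the multiplicity $\mu$, and the injection $a \mapsto a-\mu$ from generators above $\mu$ into the gaps) are correct but, as you note, deliver only $(g+1)^2/4$; the factor-of-two improvement and the proposed compression/shifting scheme are speculative and not carried out, and no such argument exists in the paper --- this is precisely why the statement remains a conjecture. So your proposal should not be assessed as a proof of Conjecture \ref{conj_ew}; at best it is a correct re-derivation of the Castelnuovo-side Proposition \ref{p_castMaxEw} together with an honest research plan for the open part. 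If your actual target was that proposition rather than the conjecture, the fix is to tighten the $m \geq 3$ case into a genuine inequality valid for all $g \geq 10$ and to handle the genus-at-most-$9$ exceptions explicitly, as the paper does.
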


 \begin{figure}
 \begin{tabular}{cc}
 $
\begin{array}{|l|l|l|l|}
\hline
g & \ewt & \mbox{ gaps } & \mbox{ gens. } \\\hline
\hline
\multirow{1}{*}{$1$} & \multirow{1}{*}{0}& {1} & \langle 2,3 \rangle \\
\hline
\multirow{1}{*}{$2$} & \multirow{1}{*}{1}& {1,3} & \langle 2,5 \rangle \\
\hline
\multirow{2}{*}{$3$} & \multirow{2}{*}{2}& {1,2,5} & \langle 3,4 \rangle \\
&& {1,3,5} & \langle 2,7 \rangle \\
\hline
\multirow{3}{*}{$4$} & \multirow{3}{*}{3}& {1..3,7} & \langle 4..6 \rangle \\
&& {1,2,4,7} & \langle 3,5 \rangle \\
&& {1,3,5,7} & \langle 2,9 \rangle \\
\hline
\multirow{5}{*}{$5$} & \multirow{5}{*}{4}& {1..4,9} & \langle 5..8 \rangle \\
&& {1..3,5,9} & \langle 4,6,7 \rangle \\
&& {1..3,6,7} & \langle 4,5,11 \rangle \\
&& {1,2,4,5,8} & \langle 3,7,11 \rangle \\
&& {1,3,5,7,9} & \langle 2,11 \rangle \\
\hline
\multirow{4}{*}{$6$} & \multirow{4}{*}{6}& {1..4,8,9} & \langle 5..7 \rangle \\
&& {1..3,5,7,11} & \langle 4,6,9 \rangle \\
&& {1..3,6,7,11} & \langle 4,5 \rangle \\
&& {1,2,4,5,8,11} & \langle 3,7 \rangle \\
\hline
\multirow{2}{*}{$7$} & \multirow{2}{*}{8}& {1..5,10,11} & \langle 6..9 \rangle \\
&& {1..3,5,7,9,13} & \langle 4,6,11 \rangle \\
\hline
\end{array}
$
&
$
\begin{array}{|l|l|l|l|}
\hline
g & \ewt & \mbox{ gaps } & \mbox{ gens. } \\\hline
\hline
\multirow{2}{*}{$8$} & \multirow{2}{*}{10}& {1..6,12,13} & \langle 7..11 \rangle \\
&& {1..3,5,7,9,11,15} & \langle 4,6,13 \rangle \\
\hline
\multirow{4}{*}{$9$} & \multirow{4}{*}{12}& {1..7,14,15} & \langle 8..13 \rangle \\
&& {1..6,11..13} & \langle 7..10 \rangle \\
&& {1..5,9..11,17} & \langle 6..8 \rangle \\
&& {1..3,5,7,9,11,13,17} & \langle 4,6,15 \rangle \\
\hline
\multirow{1}{*}{$10$} & \multirow{1}{*}{15}& {1..7,13..15} & \langle 8..12 \rangle \\
\hline
\multirow{1}{*}{$11$} & \multirow{1}{*}{18}& {1..8,15..17} & \langle 9..14 \rangle \\
\hline
\multirow{1}{*}{$12$} & \multirow{1}{*}{21}& {1..9,17..19} & \langle 10..16 \rangle \\
\hline
\multirow{2}{*}{$13$} & \multirow{2}{*}{24}& {1..10,19..21} & \langle 11..18 \rangle \\
&& {1..9,16..19} & \langle 10..15 \rangle \\
\hline
\multirow{1}{*}{$14$} & \multirow{1}{*}{28}& {1..10,18..21} & \langle 11..17 \rangle \\
\hline
\multirow{1}{*}{$15$} & \multirow{1}{*}{32}& {1..11,20..23} & \langle 12..19 \rangle \\
\hline
\multirow{1}{*}{$16$} & \multirow{1}{*}{36}& {1..12,22..25} & \langle 13..21 \rangle \\
\hline
\multirow{2}{*}{$17$} & \multirow{2}{*}{40}& {1..13,24..27} & \langle 14..23 \rangle \\
&& {1..12,21..25} & \langle 13..20 \rangle \\
\hline
\multirow{1}{*}{$18$} & \multirow{1}{*}{45}& {1..13,23..27} & \langle 14..22 \rangle \\
\hline
\multirow{1}{*}{$19$} & \multirow{1}{*}{50}& {1..14,25..29} & \langle 15..24 \rangle \\
\hline
\end{array}
$
\end{tabular}
\caption{Maximum effective weights of semigroups of genus $g \leq 19$, and all semigroups achieving the maximum in each case.} \label{fig_maxewt}
\end{figure}

Figure \ref{fig_maxewt} illustrates this conjecture for genera up to $19$. It is easy to guess from these data which specific Castelnuovo semigroup achieves the maximum for genus $g \geq 10$. One can set
\begin{eqnarray*}
d &=& \frac14 ( 5g + 1 + 3 \eta )\\
r &=& \frac12 ( g + 1 + \eta ),
\end{eqnarray*}

where $\eta$ is an integer between $-2$ and $2$ inclusive such that $\eta \equiv g+1 \pmod{4}$ (there are two choices when $g \equiv 1 \pmod{4}$, and only one choice otherwise). Then $S_{r,d}$ is a genus $g$ Castelnuovo semigroup of effective weight $\frac18 (g+1)^2 - \frac18 \eta^2 = \left\lfloor \frac{(g+1)^2}{8} \right\rfloor$. When $g \geq 10$, these indeed are the only equality cases coming from Castelnuovo semigroups.

\begin{prop} \label{p_castMaxEw}
For any Castelnuovo semigroup $S_{r,d}$ of genus $g$, Conjecture \ref{conj_ew} holds. For $g \geq 10$, the only Castelnuovo semigroups that achieve equality $\ewt(S_{r,d}) = \left\lfloor \frac{(g+1)^2}{8} \right\rfloor$ are those described above.
\end{prop}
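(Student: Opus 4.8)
The plan is to reduce everything to an explicit formula for $\ewt(S_{r,d})$ and then to a one-variable optimization for each value of $m$. First I would compute $\ewt$ directly from the definition. Since $d \ge 2r+1$, the $r$ numbers $d-r+1,\dots,d$ are exactly the minimal generators, and the gaps of $S_{r,d}$ split into the initial block $\{1,\dots,d-r\}$, every element of which lies below all generators, together with $g-(d-r)$ further gaps, all of which exceed $d$ and hence lie above all $r$ generators. Summing the definition of $\ewt$ over these two types of gaps gives
\begin{equation*}
\ewt(S_{r,d}) = r\bigl(g-d+r\bigr) = r\left[(r-1)\binom{m-1}{2} + (m-1)\eps\right],
\end{equation*}
where I have rewritten $g-d+r$ using $g = \binom{m}{2}(r-1)+m\eps$ and $d = m(r-1)+\eps+1$. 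It then suffices to prove $8\,\ewt(S_{r,d}) \le (g+1)^2$, since $\ewt$ is an integer; and because $(g+1)^2 - 8\,\ewt \equiv (g+1)^2 \pmod 8$, the equality $\ewt = \lfloor (g+1)^2/8\rfloor$ holds precisely when $(g+1)^2 - 8\,\ewt < 8$.

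The central observation is that, for fixed $r$ and $m\ge 3$, the quantity $(g+1)^2 - 8\,\ewt$ is a quadratic in $\eps$ with positive leading coefficient $m^2$, whose vertex (a short computation shows) lies at $\eps^\ast < 1$. Consequently its minimum over the admissible nonnegative integers $\eps$ is attained at $\eps = 0$, where it equals $\bigl[(r-1)\binom{m}{2}+1\bigr]^2 - 8r(r-1)\binom{m-1}{2}$, collapsing the problem to a one-variable inequality in $r$ for each $m$. For $m = 3$ this $\eps=0$ value equals $(r-2)^2$, and for $m \ge 4$ it is an upward parabola in $r-1$ whose minimum over $r\ge 3$ is $25$; in both ranges it is nonnegative. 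The remaining case $m=2$ is even simpler: there $(g+1)^2 - 8\,\ewt$ factors exactly as $(r-2\eps)^2$ for every $\eps$. Hence $8\,\ewt \le (g+1)^2$ in all cases, proving $\ewt(S_{r,d}) \le \lfloor(g+1)^2/8\rfloor$ and the first assertion.

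For the equality statement I would use the criterion $(g+1)^2 - 8\,\ewt < 8$. When $m=2$ one has $(g+1)^2 - 8\,\ewt = (r-2\eps)^2$, so equality holds exactly when $|r-2\eps| \le 2$; setting $\eta = r-2\eps$ and solving $g+1 = r+2\eps$ recovers precisely the semigroups $r = \tfrac12(g+1+\eta)$, $d = \tfrac14(5g+1+3\eta)$ with $\eta \equiv g+1 \pmod 4$ and $|\eta|\le 2$, including the two-solution phenomenon when $g\equiv1\pmod4$. It remains to show that no equality case with $m \ge 3$ occurs once $g \ge 10$. Here I would combine the convexity-in-$\eps$ reduction with the explicit $\eps=0$ values: for $m\ge4$ the quantity is already $\ge 25 > 7$; for $m=3$ one has $g = 3(r-1+\eps)$, so $g\ge 10$ forces $g\ge 12$, and a direct check of the small quadratic values (e.g.\ $(r-2)^2$ at $\eps=0$, $(r-1)^2$ at $\eps=1$, and $(3\eps-1)^2$ at $r=3$) shows $(g+1)^2 - 8\,\ewt \ge 9$ throughout, the only cases dropping to $\le 4$ being $g \in \{6,9\}$. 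Thus for $g\ge10$ equality forces $m=2$, completing the characterization.

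I expect the main obstacle to be the bookkeeping in the $m \ge 3$ analysis---specifically, verifying cleanly that the $\eps=0$ value stays bounded away from the residue $(g+1)^2 \bmod 8$ for all admissible $r$, and correctly handling the parametrization ambiguity at $\eps = r-1$ (where $(m,\eps=r-1)$ and $(m+1,\eps=0)$ name the same semigroup) so that no equality case is miscounted. The convexity reduction to $\eps=0$ is precisely what keeps this from becoming an unwieldy two-variable estimate.
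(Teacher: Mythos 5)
Your proposal is correct in substance and, on the main technical point (the case $m \ge 3$), it takes a genuinely different route from the paper. The two arguments share the same starting point: the formula $\ewt(S_{r,d}) = r(g-d+r)$ obtained from the gap structure, and essentially the same treatment of $m=2$ via the exact identity $(g+1)^2 - 8\ewt(S_{r,d}) = (r-2\eps)^2$ together with the $\eta$-parametrization of the equality cases. For $m \ge 3$, however, the paper proceeds analytically: it rewrites $\ewt(S_{r,d}) = \frac{2(m-1)}{m^2}\left(\frac{mr}{2}\right)\left(g+\frac{m-mr}{2}\right)$, applies the arithmetic--geometric mean inequality to get $\ewt \le \frac{m-1}{2m^2}\left(g+\frac{m}{2}\right)^2$, shows by a logarithmic-derivative computation that this bound is decreasing in $m$ on $[3,g]$ (so it is at most $\frac19\left(g+\frac32\right)^2$), checks that $\frac19\left(g+\frac32\right)^2 < \left\lfloor \frac{(g+1)^2}{8} \right\rfloor$ once $g \ge 10$, and disposes of the finitely many semigroups with $m\ge 3$ and $g \le 9$ by enumeration. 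You replace the AM--GM and calculus steps with integer quadratic algebra: fix $m$, view $(g+1)^2 - 8\ewt$ as a convex quadratic in $\eps$, reduce to $\eps = 0$, and compute the resulting values in closed form. This buys more refined information than the paper's method: the exact values $(r-2)^2$ (at $m=3$, $\eps=0$), $(r-1)^2$ (at $\eps=1$), $(3\eps-1)^2$ (at $r=3$), and the bound $25$ for $m \ge 4$ identify the exceptional equality cases $(r,d) = (3,7), (3,8), (4,10)$ (genera $6$, $9$, $9$) directly rather than by search, and make transparent why no equality case with $m \ge 3$ survives to $g \ge 10$. The cost is a bit more case bookkeeping, which you anticipate.

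One step needs tightening. You infer ``the minimum over nonnegative integers $\eps$ is at $\eps = 0$'' from ``the vertex satisfies $\eps^\ast < 1$,'' but that inference fails in general: a convex quadratic with vertex in the interval $(\frac12, 1)$ attains its integer minimum at $\eps = 1$, not $\eps = 0$. What your ``short computation'' actually yields is stronger and is what you should state: for $m = 3$ the vertex is at $\eps^\ast = \frac{6-r}{9} \le \frac13$, and for $m \ge 4$ one checks $\eps^\ast < 0$; in both cases $\eps^\ast \le \frac12$, so the reduction to $\eps = 0$ is valid. (Alternatively, since for $m=3$ you compute $q(1) = (r-1)^2 \ge (r-2)^2 = q(0)$ anyway, the weaker bound $\eps^\ast < 1$ combined with $\min\bigl(q(0),q(1)\bigr) = q(0)$ also closes the gap.) With that one-line patch, the proof is complete.
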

\begin{proof}
The effective weight of $S_{r,d}$ is equal to $r(g-d+r)$, since all but $d-r$ of the gaps lie below all generators, and all other gaps lie above all generators. Let $m,\eps$ be the quotient and remainder when $d-1$ is divided by $r-1$. 

Consider first the case $m=2$. Then $g = r + 2 \eps -1$ and $d = 2r + \eps -1 $. The quantity $\frac18 (g+1)^2 - \ewt(S_{r,d})$ is equal to $\frac18 (r - 2\eps)^2$, which is obviously nonnegative. Furthermore, the floor of this quantity is $0$ if and only if $2\eps -2 \leq r \leq 2 \eps + 2$. Let $\eta = r - 2 \eps$; we have shown that the equality case of Conjecture \ref{conj_ew} is equivalent to $|\eta| \leq 2$. Some algebra shows that $g = 4 \eps + \eta -1$, hence $\eta \equiv g+1 \pmod{4}$, $d = \frac14 (5g+1+3\eta)$ and $r = \frac12 (g+1+\eta)$. So the proposition holds in the case $m=2$.

Now suppose $m \geq 3$. We will show that if $g \geq 10$, then $S_{r,d}$ has effective weight striclty less than $\left\lfloor \frac{(g+1)^2}{8} \right\rfloor$. It is straightforward to enumerate the Castelnuovo semigroups with $m \geq 3$ and genus $g \leq 9$ to check the Proposition in those cases (in three of them, namely $(r,d) = (3,7),(3,8)$, and $(4,10)$, equality occurs in Conjecture \ref{conj_ew}; the cases $(3,8)$ and $(4,10)$ give genus $9$ semigroups, so we must assume $g \geq 10$ in order to prove the strict inequality). 

Algebraic manipulation shows that $\ewt(S_{r,d}) = \frac{2(m-1)}{m^2} \left( \frac{mr}{2} \right) \left( g + \frac{m-mr}{2} \right)$. By the arithmetic-geometric mean inequality, this is at most $\frac{m-1}{2m^2} \left( g + \frac{m}{2} \right)^2$. We claim that this, in turn, is at most $\frac{1}{9} (g + \frac32)^2$ (with equality when $m=3$). To see this, consider the function $f(x) = \frac{x-1}{2x^2}(g+ \frac12 x)^2$. The logarithmic derivative $f'(x) / f(x)$ of $f(x)$ is equal to $\frac{(x-g)^2 - (g^2-4g)}{(x-1)x (x+2g)}$, which is negative for $3 \leq x \leq g$ (this requires assuming that $g \geq 5$). Since $m \leq g$, it follows that $f(m) \leq f(3) = \frac{1}{9}(g+\frac32)^2$. For all $g \geq 10$,  $\frac19 (g+\frac32)^2 < \left\lfloor \frac{(g+1)^2}{8} \right\rfloor$, hence $\ewt(S_{r,d}) < \left\lfloor \frac{(g+1)^2}{8} \right\rfloor$.
\end{proof}

Indeed, the Castelnuovo semigroups of maximum effective weight do provide examples of numerical semigroups for which $\dim \cM^S_{g,1}$ exceeds the prediction of the effective weight. In fact, most other Castelnuovo semigroups (not just those of maximum effective weight) also provide such examples.

\begin{prop} \label{p_effproper}
For a Castelnuovo semigroup $S = S_{r,d}$, $\cM^S_{g,1}$ has an effectively proper component if either $d \leq 2r+1$, or $d=2r+2$ and $r \in \{4,5\}$. Otherwise, all components of $\cM^S_{g,1}$ have dimension strictly greater than $3g-2-\ewt(S)$.
\end{prop}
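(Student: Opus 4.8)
The plan is to compare, for each irreducible component $X$ of $\cM^{S_{r,d}}_{g,1}$ produced by Theorem~\ref{t_main}, its dimension against the universal lower bound $\dim X \geq 3g-2-\ewt(S)$ from \cite{pfl}. Recalling from the proof of Proposition~\ref{p_castMaxEw} that $\ewt(S_{r,d}) = r(g-d+r)$, I define the \emph{excess}
\[
e(X) = \dim X - \bigl(3g-2-r(g-d+r)\bigr) \geq 0,
\]
so that $X$ is effectively proper exactly when $e(X)=0$. Since the bound holds for every component, $\cM^{S_{r,d}}_{g,1}$ has an effectively proper component if and only if its component of \emph{smallest} dimension has zero excess. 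The boundary cases $d=2r-1$ and $d=2r$ are dispatched directly from the explicit dimension counts in the introduction (for both, one checks $\dim = 3g-2-\ewt(S)$), so I may assume $d \geq 2r+1$ and invoke Theorem~\ref{t_main}. There the second scroll component (present when $\eps=0$, $r\geq 4$) exceeds $\cw_{r,m,\eps}$ in dimension by $r-3>0$, hence is never the smallest, while the Veronese component (present when $r=5$, $d$ even) is smaller than $\cw_{r,m,\eps}$ by $\frac{\eps+1}{2}>0$, hence is the smallest whenever it occurs. Thus it suffices to evaluate $e(\cw_{r,m,\eps})$ when there is no Veronese component, and $e(\cw^{\textrm{ver}}_{5,d})$ when there is.

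For the main scroll component I substitute $\dim \cw_{r,m,\eps} = g+2m+\eps-2$ together with $g = \binom{m}{2}(r-1)+m\eps$ and $d = m(r-1)+\eps+1$ and simplify, obtaining
\[
e(\cw_{r,m,\eps}) = \tfrac12(r-1)(m-1)(mr-2m-2r) + \eps\bigl[(m-1)(r-2)-1\bigr] + 2m.
\]
I expect this to factor pleasantly on the boundary of the parameter range: namely $e = (r-3)(\eps-2)$ when $m=2$, $e = (m-2)(m-3+\eps)$ when $r=3$, and $e = (r-3)(r-4)+\eps(2r-5)$ when $m=3$. Recalling that for the main component $\eps$ is the genuine remainder, so $0\leq \eps \leq r-2$ (with $\eps \geq 2$ forced once $m=2$), the zero loci of these factored forms single out precisely $d=2r+1$ (via $(m,\eps)=(2,2)$ when $r\geq 4$, and via $(m,\eps)=(3,0)$ when $r=3$) together with $(r,d)=(4,2r+2)$. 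For $m\geq 4$ I will argue $e>0$ directly: when $r\geq 4$ the leading factor $mr-2m-2r=m(r-2)-2r$ is nonnegative, the coefficient $(m-1)(r-2)-1$ of $\eps$ is positive, and the final $+2m$ is positive; the remaining range $r=3,\ m\geq 4$ is covered by the factorization $e=(m-2)(m-3+\eps)>0$.

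For the Veronese component I repeat the substitution with $\dim \cw^{\textrm{ver}}_{5,d} = g+2m+\frac{\eps-1}{2}-2$ and $r=5$, which I expect to reduce to
\[
e(\cw^{\textrm{ver}}_{5,d}) = 6m^2 - 24m + \tfrac{3}{2}\eps(2m-3) + \tfrac{39}{2}.
\]
Since $d$ even forces $\eps$ odd and $d\geq 12$ forces $m\geq 2$, this quantity vanishes only at $(m,\eps)=(2,3)$, i.e. $d=12=2r+2$, and is strictly positive for all $m\geq 3$ (there $6m^2-24m \geq -18$ is outweighed by $\tfrac{39}{2}$). Assembling the three computations produces exactly the claimed list: an effectively proper component exists iff $d \leq 2r+1$, or $d=2r+2$ with $r=4$ (realized by $\cw_{r,m,\eps}$) or $r=5$ (realized by the Veronese component); in every other case the smallest component has strictly positive excess, so all components have dimension strictly greater than $3g-2-\ewt(S)$. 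The work here is essentially bookkeeping rather than conceptual, and the one delicate point is organizing the case split around the two representations $(m,\eps)$ and $(m-1,r-1)$ available when $(r-1)\mid(d-1)$, together with the small values $r\in\{3,4\}$ at which the leading factor of $e(\cw_{r,m,\eps})$ degenerates.
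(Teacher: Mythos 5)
Your proposal is correct and follows essentially the same route as the paper: it uses $\ewt(S_{r,d}) = r(g-d+r)$, disposes of the boundary cases via the introduction, computes the excess $\dim \cw_{r,m,\eps} - (3g-2-\ewt(S))$ from the dimension formulas of Theorem \ref{t_main} (your expression is algebraically identical to the paper's $\binom{m-2}{2}(r-1)(r-2) + (r-3)\left((r-2)(m-2)-2\right) + \eps\left((r-2)(m-1)-1\right)$, just factored differently), observes that the second scroll component is never minimal, and checks the Veronese component separately to isolate $d=12$. The only differences are cosmetic --- your case split $m=2$, $r=3$, $m=3$, $m \geq 4$ versus the paper's $m=2$ versus $m\geq 3$ --- and in fact your equality conditions $(m,\eps)=(2,2)$ and $\eps=0$ correct two sign/typo slips in the paper's own write-up.
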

\begin{proof}
As mentioned in the proof of Proposition \ref{p_castMaxEw}, $\ewt(S) = r(g-d+r)$. One can check, using the cases listed in Section \ref{s_intro}, that if $d \leq 2r$ or $r=2$ then $\cM^{S_{r,d}}_{g,1}$ is irreducible and effectively proper. So it suffices to consider $d \geq 2r+1$. 

Denote by $m,\eps$ the quotient and remainder when $d-1$ is divided by $r-1$. Then one can compute using the expression in Theorem \ref{t_main} that
$\dim \cw_{r,m,\eps} - (3g-2-\ewt(S_{r,d}))$
is equal to
\begin{equation*}
\binom{m-2}{2} (r-1)(r-2) + (r-3) \left( (r-2)(m-2) - 2\right) + \eps \left( (r-2)(m-1) -1 \right).
\end{equation*}

Consider first the case $m=2$. Note that this implies $r \geq 4$. Then this expression simplifies to $(\eps-2)(r-3)$. This is nonnegative, and equal to $0$ if and only if $\eps = 0$, i.e. $d = 2r+1$.

Now consider the case $m \geq 3$.  All three terms in the sum are necessarily nonnegative in this case. The first is equal to $0$ if and only if $m=3$. Given that $m=3$, the second is equal to $0$ if and only if $r=3$ or $r=4$. The third is $0$ if and only if $\eps = 2$. So the whole expression is $0$ if and only if $d = 3r-2$ and $r=3$ or $r=4$.

Therefore $\dim \cw_{r,m,\eps} \geq 3g-2 - \ewt(S_{r,d})$ with equality if and only if either $d = 2r+1$ (this includes the case $r=3,\ d=3r-2$) or $(r,d) = (4,10)$. In the case where $r-1$ divides $d-1$, the second component of $\cM^{S_{r,d}}_{g,1}$ has dimension strictly larger than the first, so we do not need to consider it.

Now consider the components $\cw^{\textrm{ver}}_{5,d}$, in the case where $r=5$ and $d$ is even. By Theorem \ref{t_main}, this component is effectively proper if and only if the expression above is equal to $\frac{\eps+1}{2}$, i.e.
$$
12 \binom{m-2}{2} + 2(3m - 8 ) + \eps(3m - 4) = \frac{\eps+1}{2}.
$$
If $m \geq 3$, then the left side is at least $2 + 5 \eps$, which is strictly greater than the right side. If $m=2$, then $\eps$ is necessarily $3$, and indeed the equation holds. So $\cw^{\textrm{ver}}_{5,d}$ is effectively proper if and only if $d=2r+2 = 12$.
\end{proof}

With this in mind, we are naturally led to the question: can the effective weight be improved to accommodate Castelnuovo semigroups?

\begin{qu} \label{q_betterWeight}
Is there  a combinatorial quantity associated to every numerical semigroup that gives an upper bound on the codimension of all components of $\cM^S_{g,1}$ for all numerical semigroups $S$ of genus $g$, is equal to $\ewt(S)$ when $\ewt(S) \leq g-1$, and is equal to the maximum codimension of a component of $\cM^S_{g,1}$ for all Castelnuovo semigroups $S$?
\end{qu}

%-----------
%-----------

\bibliography{main}{}

\begin{thebibliography}{ACGH85}

\bibitem[ACGH85]{ACGH}
E.~Arbarello, M.~Cornalba, P.~Griffiths, and J.~Harris.
\newblock {\em Geometry of algebraic curves, {V}olume {I}}.
\newblock Springer-Verlag, 1985.

\bibitem[Bea96]{beau}
Arnaud Beauville.
\newblock {\em Complex algebraic surfaces}.
\newblock Cambridge University Press, 1996.

\bibitem[Bul13]{bullock}
Evan Bullock.
\newblock Subcanonical points on algebraic curves.
\newblock {\em Transactions of the American Mathematical Society},
  365(1):99--122, 2013.

\bibitem[Car02]{carvalho}
C{\i}cero Carvalho.
\newblock Weierstrass gaps and curves on a scroll.
\newblock {\em Contributions to Algebra and Geometry}, 43(1):209--216, 2002.

\bibitem[Cil87]{ciliberto}
Ciro Ciliberto.
\newblock On the {H}ilbert scheme of curves of maximal genus in a projective
  space.
\newblock {\em Mathematische Zeitschrift}, 194(3):351--363, 1987.

\bibitem[CT03]{ct}
Cicero Carvalho and Fernando Torres.
\newblock On numerical semigroups related to covering of curves.
\newblock {\em Semigroup Forum}, 67(3), 2003.

\bibitem[EH87]{scrollitude}
David Eisenbud and Joe Harris.
\newblock On varieties of minimal degree.
\newblock {\em Proc. Sympos. Pure Math}, 46(1):3--13, 1987.

\bibitem[Ful93]{fulton}
William Fulton.
\newblock {\em Introduction to toric varieties}.
\newblock Princeton University Press, 1993.

\bibitem[GH78]{gh}
Phillip Griffiths and Joseph Harris.
\newblock {\em Principles of algebraic geometry}.
\newblock John Wiley \& Sons, 1978.

\bibitem[Har77]{hartshorne}
Robin Hartshorne.
\newblock {\em Algebraic Geometry}.
\newblock Springer, 1977.

\bibitem[HE82]{montreal}
Joe Harris and David Eisenbud.
\newblock Curves in projective space.
\newblock Les Presses de L'Universit\'{e} de Montr\'{e}al, 1982.

\bibitem[Pfl18]{pfl}
Nathan Pflueger.
\newblock On nonprimitive {W}eierstrass points.
\newblock {\em Algebra Number Theory}, 12(8):1923--1947, 2018.

\end{thebibliography}
\bibliographystyle{alpha}

\end{document}